\newcommand{\Lk}[0]{\operatorname{Lk}}  
\newcommand{\lk}[0]{\operatorname{Lk}}  
\newcommand{\st}[0]{\operatorname{St}}  
\newcommand{\cst}[0]{\overline{\operatorname{St}}}  
\newcommand{\ex}[0]{\operatorname{Ex}}  
\newcommand{\spec}[0]{\operatorname{Spec}} 
\newcommand{\red}[0]{\operatorname{red}} 
\newcommand{\supp}[0]{\operatorname{Supp}} 
\newcommand{\map}{\dasharrow}
\newcommand{\cone}[0]{\operatorname{Cone}} 
\newcommand{\DMR}{{\mathcal{DMR}}}
\newcommand{\DR}{{\mathcal{DR}}}
\newcommand{\D}{{\mathcal{D}}}
\newtheorem{thm}{Theorem}
\newtheorem{lem}[thm]{Lemma}
\newtheorem{cor}[thm]{Corollary}
\newtheorem{prop}[thm]{Proposition}
\theoremstyle{definition}
\newtheorem{defn}[thm]{Definition}
\newtheorem{say}[thm]{}
\newtheorem{exmp}[thm]{Example}
\newtheorem{rem}[thm]{Remark}          
\newtheorem*{ack}{Acknowledgments}      
\newtheorem{defn-thm}[thm]{Definition--Theorem}  
\newtheorem{defn-lem}[thm]{Definition--Lemma}  
\newtheorem{comp}[thm]{Complement} 
\theoremstyle{remark}
\begin{document}
\title{The dual complex of  singularities}

\author{Tommaso de Fernex, J\'anos Koll\'ar and Chenyang Xu}

\date{\today}

\maketitle{}
\tableofcontents
\section{Introduction}

To every simple normal crossing variety $E$
one can associate a cell complex $\D(E)$, called \emph{the dual complex} of $E$,
that describes how the irreducible components of $E$ intersect;
see Definition \ref{dual.cpx.defn} for details. 
Over the 
complex numbers,  one can think of $\D(E)$ as the 
\emph{combinatorial part} of the topology of $E$.

Let $(0\in X)$ be a (not necessarily isolated) singularity
 and   $f\colon Y\to X$   a  resolution such that 
$E:=\supp\bigl(f^{-1}(0)\bigr)$ is a 
simple normal crossing divisor.
The corresponding dual complex $\D(E)$ depends on the choice of $Y$
but, as   proved in increasingly general forms
in the papers \cites{Stepanov06,Thuillier07,Stepanov07, ABW11, Payne11},
 the different $\D(E)$ are all homotopy equivalent,
 even  simple-homotopy equivalent; see
Definition \ref{collapse.defn}. Their
(simple-)homotopy equivalence class is denoted by $\DR(0\in X)$.

The main result of this paper is that in many cases, for instance
for isolated singularities, one can do even better and 
select a ``minimal'' representative  $\DMR(0\in X)$
that is well defined up-to piecewise linear homeomorphism.

For surfaces, this representative is given by
$\D(E)$ of any  resolution $f:Y\to X$ such that
$E:=\supp\bigl(f^{-1}(0)\bigr)$ is a 
nodal curve with smooth irreducible components and
$K_{Y}+E$ is $f$-nef, that is, $\bigl((K_{Y}+E)\cdot C\bigr)\geq 0$
for every $f$-exceptional curve $C$.

In higher dimensions such resolutions usually do not exist
but the Minimal Model Program tells us that such objects do exist
if we allow $Y$ and $E$ to be mildly singular.
The relevant notion is  {\it divisorial log terminal} or {\it dlt,}
see Definition \ref{nor.conv.say}. 
Every simple normal crossing pair $(Y,E)$ is also dlt and
for general dlt pairs  $(Y,E)$ one can define the dual complex $\D(E)$
by simply ignoring the singularities, see Definition \ref{dual.cpx.defn}.
Furthermore, for every normal variety $X$ for which
$K_X$ is $\mathbb Q$-Cartier, there are
(usually infintely many) projective, birational  morphisms  $p:Y\to X$ 
such that $(Y,E)$ is dlt and $K_Y+E$ is $p$-nef, where $E$ 
is the divisorial part of the exceptional set $\ex(p)$; see \cite{OX12}.
These are called
the {\it dlt modifications} of $X$, see Definition \ref{dlt-mod.defn}.
(Conjecturally, dlt modifications  exist for
any $X$.) 

 Our main theorem is the following.
Here we state it for isolated singularities; the general form is
given in Section \ref{pf.sect}.

\begin{thm}\label{DMR-thm-isolated}
Let $(0\in X)$ be an isolated singularity over ${\mathbb C}$
such that $K_X$ is $\mathbb Q$-Cartier.
Let $p:Y\to X$ be a proper, birational morphism 
and $E\subset Y$ the divisorial part of  $\supp p^{-1}(0)$.
Assume that  $(Y,E)$ is dlt and $E\neq \emptyset$.    
\begin{enumerate}
\item If $p$ is a dlt modification, that is if $K_Y+E$ is $p$-nef, then
the dual complex $\D(E)$ is independent of $p$, up-to
PL homeomorphism. This defines a PL homeomorphism class 
associated to $(0\in X)$; we
 denote it by $\DMR(0\in X)$.
\item If  $E=\supp p^{-1}(0)$ then
 the dual complex $\D(E)$ is
 simple-homotopy equivalent to
$\DMR(0\in X)$.
\item If  $X$ is $\mathbb Q$-factorial, $p$ is projective 
and $p$ is  an isomorphism over $X\setminus \{0\}$   
then  $\D(E)$ collapses to $\DMR(0\in X)$.
\end{enumerate}
\end{thm}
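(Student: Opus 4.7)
All three parts will be proved by running the $(K_Y+E)$-MMP over $X$ and analyzing how $\D(E)$ transforms at each step. Every MMP step is either a small birational modification (log flip/flop) or a divisorial contraction of an irreducible component of $E$. The key combinatorial inputs I would establish are:
\begin{enumerate}
\item[(i)] every small dlt modification between $\mathbb{Q}$-factorial dlt pairs induces a PL homeomorphism of dual complexes, matching strata via their strict transforms;
\item[(ii)] every $(K+E)$-extremal divisorial contraction of a component $D \subset E$ induces an elementary collapse $\D(E) \searrow \D(\pi_\ast E)$, obtained by deleting the open star of the vertex $v_D$.
\end{enumerate}

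\textbf{Proof of (1).} Given two dlt modifications $(Y_i, E_i)\to X$, both are $\mathbb{Q}$-factorial with $K+E$ nef over $X$, so by standard MMP theory (cf.\ \cite{OX12}) they are connected by a finite chain of $(K+E)$-trivial log flops. Iterating input (i) yields a PL homeomorphism $\D(E_1) \cong \D(E_2)$, showing that $\DMR(0\in X)$ is well defined.

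\textbf{Proof of (2) and (3).} For (2), one first passes to a $\mathbb{Q}$-factorial dlt modification of the given $(Y,E)$ and then runs the $(K_Y+E)$-MMP over $X$. It terminates at a dlt modification whose dual complex is $\DMR(0\in X)$ by (1). Alternating the PL homeos of (i) with the collapses of (ii) produces a simple-homotopy equivalence $\D(E)\sim \DMR(0\in X)$. For (3), the stronger hypotheses ($X$ is $\mathbb{Q}$-factorial and $p$ is an isomorphism over $X\setminus\{0\}$) allow one to avoid the initial $\mathbb{Q}$-factorialization and to organize the MMP so that the PL homeos of (i) preserve the ambient simplicial complex along the given embedding (not just its homotopy type), upgrading the conclusion to an actual collapse $\D(E) \searrow \DMR(0\in X)$.

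\textbf{Main obstacle.} The heart of the argument is input (ii). Proving that $\D(E) \searrow \D(\pi_\ast E)$ is an elementary collapse amounts to showing that the link $\Lk(v_D)$ is collapsible. By adjunction this link equals the dual complex of the dlt pair $\bigl(D,(E-D)|_D\bigr)$, and the extremality of the contraction gives this induced pair a relative ``log Fano'' structure, so its dual complex ought to be collapsible. Establishing this rigorously will require an induction on $\dim X$, bootstrapping from lower-dimensional cases of the very theorem being proved. A parallel care is needed in (i) to verify that the bijection of strata across a small modification assembles into a genuine simplicial PL isomorphism; this rests on the fact that the generic points of all strata of $E$ lie outside the codimension-$2$ flipping locus, so the simplicial structure is read off from a common open subset of $Y$ and its flip.
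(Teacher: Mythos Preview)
Your input (i) is false as stated: a flip between $\mathbb{Q}$-factorial dlt pairs need \emph{not} induce a PL homeomorphism of dual complexes. The paper's own Atiyah-flop examples already show this: for the pair $(Y', A'_1+A'_2+B'_1)$ the flip sends the dual complex from a $2$-simplex to an L-shaped graph (a collapse, not a homeomorphism), and for $(Y',A'_1+A'_2)$ the flip disconnects the dual complex entirely. So in your argument for (2) and (3) the small steps of the MMP cannot be handled by (i). Even for genuine $(K+E)$-flops your justification fails: lc centers \emph{can} lie in the flopping locus (in the flop for $(Y',A'_1+A'_2+B'_1+B'_2)$ the flopped curve is itself the stratum $A'_1\cap A'_2$), so one cannot read off the simplicial structure from a common open set; the two dual complexes there are PL homeomorphic, but not via a bijection of strata.

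The paper's mechanism is quite different from both of your inputs. For each MMP step (flip or divisorial contraction alike) with ray $R$, one first produces a prime divisor $D_0\subset E$ with $(D_0\cdot R)>0$; the hypotheses of (2) and (3) enter precisely in guaranteeing such a $D_0$ exists at every step (for (2) via a numerically trivial divisor supported on $E$, for (3) via $\mathbb{Q}$-factoriality of $X$). The collapse $\D(E)\searrow\D(f_*E)$ is then organized around $D_0$, not around the contracted divisor: every contracted lc center $W\not\subset D_0$ is paired with the contracted center $D_0\cap W$, and these free pairs are collapsed in order of decreasing dimension. This treats flips and divisorial contractions uniformly, with no induction on $\dim X$ and no appeal to collapsibility of $\Lk(v_D)$. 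Part~(1) is handled separately, not by flops but by weak factorization: any two crepant dlt models are compared through snc log resolutions connected by smooth blow-ups and blow-downs along snc centers, each inducing a stellar subdivision (or its inverse) of the dual complex.
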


Here $\DMR$ stands for the ``Dual complex of the Minimal
divisorial log terminal partial Resolution''
and a {\it collapse}
 is a particularly simple type of homotopy equivalence;
 see Definition   \ref{collapse.defn}.

The assumptions in (\ref{DMR-thm-isolated}.1--2) are most likely
optimal. 

If the minimal model conjecture holds for dlt pairs then
 $\DMR(0\in X)$ is defined even if  $K_X$ is not $\mathbb Q$-Cartier,
but it need not be homotopy equivalent to  $\DR(0\in X)$.
On the other hand, even in these cases, 
we can apply the more general Theorem \ref{DMR-thm}
to  a pair $(X,\Delta)$ where $\Delta\sim_{\mathbb Q} -K_X$ is a general 
$\mathbb Q$-divisor.

Even for hypersurface singularities the various
$\D(E)$ are not homeomorphic to each other,
even  the number of connected components of $\D(E)$ can change.

By contrast, the necessity of the 
assumptions in (\ref{DMR-thm-isolated}.3) is not clear and
we do not have any examples where $\D(E)$ does not collapse to $\DMR(0\in X)$.
Note also that many singularities are $\mathbb Q$-factorial 
(for instance all isolated  complete intersection singularities
 of dimension $\geq 4$) and many can be made
$\mathbb Q$-factorial  by choosing a suitable algebraic model
\cite{par-sri}.

 If $X$ is  log terminal    
 then its dlt modification is the identity
map $X\cong X$, thus $E=\emptyset$. Nonetheless, 
in this case we  define $\DMR(0\in X)$ to be a point,
rather than the empty set.
By a simple trick of introducing an auxiliary divisor,
we can apply the general version of 
Theorem \ref{DMR-thm-isolated} to this case and 
prove the following.

\begin{thm}\label{c-dlt} 
Let $0\in X$ be a point on  a dlt pair $(X,\Delta)$ over ${\mathbb C}$. 
Then $\DR(0\in X)$ is contractible.
\end{thm}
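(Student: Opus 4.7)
The plan is to reduce to the general form of Theorem~\ref{DMR-thm-isolated} (announced for Section~\ref{pf.sect}) by perturbing the boundary with an auxiliary divisor. The obstacle to a direct application is that, since $(X,\Delta)$ is dlt, the identity map is already a dlt modification with empty exceptional divisor over $0$, so part~(2) of the theorem gives no information. I would remedy this by enlarging $\Delta$ in order to create a nontrivial dlt modification whose dual complex is visibly a single point.

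Concretely, I would first reduce to the case where $(X,\Delta)$ is klt at $0$: if $0\in\supp\lfloor\Delta\rfloor$, I slightly decrease the coefficient of a boundary component through $0$, which leaves the pair dlt and does not change $\DR(0\in X)$. Then choose a general effective $\mathbb Q$-Cartier divisor $H$ through $0$ and let $c$ be the log canonical threshold of $H$ at $0$ with respect to $(X,\Delta)$. The pair $(X,\Delta+cH)$ is log canonical at $0$ with at least one lc place lying over $0$. By a Koll\'ar-style tiebreaking argument---possibly perturbing further by small multiples of auxiliary general divisors---one arranges that this lc place is unique. The dlt modification of $(X,\Delta+cH)$ then has a single divisorial exceptional component $F$ over $0$, so $\DMR(0\in(X,\Delta+cH))$ is a single point.

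Finally, take a common log resolution $g:Y\to X$ of $(X,\Delta+cH)$ for which $\supp g^{-1}(0)$ is simple normal crossing, and set $E:=\supp g^{-1}(0)$. Then $(Y,E)$ is dlt, so by the general form of Theorem~\ref{DMR-thm-isolated}(2) applied to $(X,\Delta+cH)$, the dual complex $\D(E)$ is simple-homotopy equivalent to $\DMR(0\in(X,\Delta+cH))$, hence to a point. Since $E=\supp g^{-1}(0)$ represents $\DR(0\in X)$ by definition, this yields contractibility. The main obstacle in carrying out the plan is the tiebreaking step: producing an auxiliary $H$ with a unique lc place above $0$, and checking that the generality hypotheses of the main theorem (isolated-versus-non-isolated point, $\mathbb{Q}$-Cartier condition on $K_X+\Delta+cH$) are satisfied throughout. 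Once this is in place the conclusion is immediate from the general version of Theorem~\ref{DMR-thm-isolated}.
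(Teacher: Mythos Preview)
Your proposal is correct and follows essentially the same route as the paper: reduce to the klt case by perturbing coefficients, introduce an auxiliary divisor $H$ so that $(X,\Delta+cH)$ has a unique lc place over $0$ (the paper invokes \cite[Lem.~2.5]{HM06} for this tiebreaking step, which is exactly the Koll\'ar-style argument you sketch), and then apply Theorem~\ref{DMR-thm}(2) to conclude that $\DR(0\in X)$ is simple-homotopy equivalent to the single-point complex $\DMR(X,\Delta+cH)$. One small point to make explicit: for Theorem~\ref{DMR-thm}(2) you need the non-klt locus of the perturbed pair to be exactly $\{0\}$, so the tiebreaking must also ensure $(X,\Delta+cH)$ is klt on $X\setminus\{0\}$, not merely that the lc place over $0$ is unique.
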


Although the connection is not immediate, 
we derive Theorems \ref{DMR-thm-isolated}--\ref{c-dlt} from the following more
global result which is the  
main technical theorem of the paper.
It asserts that for a given dlt pair $(X,\Delta)$,
one should focus on the sum of those divisors that appear in $\Delta$ 
with coefficient 1. This divisor,
denoted by $\Delta^{=1}$, is also the union of all
log canonical centers of $(X,\Delta)$.

\begin{thm}\label{t-sdr}
Let $(X,\Delta)$ be a  quasi projective, dlt pair  over ${\mathbb C}$
and $g\colon Y\to X$  a projective resolution 
such that $E:=\supp g^{-1} (\Delta^{=1})$
 is a simple normal crossing divisor. Then 
\begin{enumerate}
\item The dual complex  $\D(E)$
is simple-homotopy equivalent to $\D(\Delta^{=1})$.
\item If $E$ is $\mathbb Q$-Cartier then  
$\D(E)$ collapses to $\D(\Delta^{=1})$.
\end{enumerate}
\end{thm}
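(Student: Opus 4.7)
My plan is to track how the dual complex changes as the dlt divisor $\Delta^{=1}$ is resolved by $g$, by factoring $g$ into elementary blowups whose combinatorial effect on the dual complex can be read off directly. First, I would invoke a functorial log resolution (as in the work of Szab\'o or Bierstone--Milman, or via the relative resolution results used elsewhere in the paper) to write $g$ as a composition
\[
Y = Y_n \xrightarrow{\pi_{n-1}} Y_{n-1} \to \cdots \to Y_1 \xrightarrow{\pi_0} Y_0 = X,
\]
in which each $\pi_i$ is the blowup of a smooth center $Z_i \subset Y_i$ that sits in SNC position with the accumulated boundary $E_i := \supp \pi_i^{-1}(\Delta^{=1})$. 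Using that $(X,\Delta)$ is dlt---so that $\Delta^{=1}$ is SNC generically along every log canonical center---I would arrange that each $Z_i$ lies inside a single stratum of $E_i$ and that the intermediate pair $(Y_i, E_i)$ remains dlt throughout, keeping $\D(E_i)$ well defined at every stage.

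The combinatorial heart of the argument is the effect of a single blowup $\pi_i$ on the dual complex. If $Z_i$ is the entire stratum $E_{i,j_1}\cap \cdots \cap E_{i,j_k}$, then $\D(E_{i+1})$ is obtained from $\D(E_i)$ by a star subdivision of the face $\sigma_{j_1,\ldots,j_k}$: the new exceptional divisor contributes one new vertex $v$ joined to all proper subfaces of that face, while $\sigma_{j_1,\ldots,j_k}$ itself disappears because the strict transforms of $E_{i,j_1},\ldots,E_{i,j_k}$ become pairwise disjoint along $Z_i$. If $Z_i$ is only a proper subvariety of the stratum, the old simplex persists and a new cone over a subcomplex is appended; a direct coordinate computation shows that this cone collapses onto the old simplex. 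In either case the elementary step is a simple-homotopy equivalence in the correct direction, and composing over $i = 0,\ldots,n-1$ proves part (1).

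For part (2), the $\mathbb{Q}$-Cartier hypothesis on $E$ furnishes a coherent multiplicity function on the components of $E$---positive on $g$-exceptional components and zero on the strict transforms of components of $\Delta^{=1}$---that provides an orientation for the collapses, so that they can be sequenced to run uniformly from $\D(E)$ down to $\D(\Delta^{=1})$, certifying the simple-homotopy equivalence as an actual collapse. The main obstacle I anticipate is the second case in the previous paragraph, where $Z_i$ is not a full stratum: one must check that the resulting change is still a single collapse rather than a zig-zag, and that the orderings provided by the $\mathbb{Q}$-Cartier function piece together consistently across the whole tower. A secondary subtlety is that the chosen factorization of $g$ should preserve the dlt structure at each stage rather than only a smooth ambient SNC structure, which requires combining standard log-resolution results with the local SNC description of dlt pairs at their log canonical centers.
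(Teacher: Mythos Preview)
Your plan has a structural gap at the very first move. You propose to factor $g\colon Y\to X$ as a tower of smooth-center blowups starting from $Y_0=X$ and to read off the effect on the dual complex at each stage via the usual stellar-subdivision/cone-attachment dichotomy. But $(X,\Delta)$ is dlt, hence $X$ is genuinely singular in general, and no such tower exists: the blowup of a smooth center in a singular variety is again singular, so the intermediate $Y_i$ are not smooth, and the combinatorial analysis you invoke (the paper's Paragraph~\ref{bu.and.baryc.say}) is established only for smooth ambient varieties with an snc divisor. Your hope to ``arrange that the intermediate pair $(Y_i,E_i)$ remains dlt throughout'' is not something any resolution algorithm delivers---centers are chosen to lower singularities, not to preserve a dlt condition, and blowing up a smooth subvariety that is not an lc center typically destroys dlt-ness already in dimension two. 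Szab\'o's theorem, which you cite, is a statement about the \emph{output} of a resolution (every exceptional divisor has discrepancy $>-1$), not about a tower of dlt intermediate models; it gives no factorization of the kind you need. Attempting instead to compare $Y$ with a Szab\'o resolution $Y'$ via weak factorization runs into a different problem: along the way you would be tracking $\supp$ of the preimage of $\Delta^{=1}$, which on $Y'$ is strictly larger than $(\Delta_{Y'})^{=1}$, so you land on the wrong complex.

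The paper's argument avoids factoring $g$ entirely and instead runs the minimal model program on $Y$ over $X$. One equips $Y$ with the boundary $\Gamma=g^{-1}_*\Delta^{<1}+E+\sum b_iF_i$ and runs a $(K_Y+\Gamma)$-MMP over $X$; the output is a dlt modification $(\tilde X,\tilde\Delta)$ of $(X,\Delta)$, and Proposition~\ref{crep.bir.dlt.models} identifies $\D(\tilde\Delta^{=1})$ with $\D(\Delta^{=1})$ up to PL homeomorphism. The real content is Theorem~\ref{subcomp.collapse.thm}: a single MMP step collapses $\D(\Gamma^{=1})$ onto the dual complex of the next model provided some $D_0\subset\Gamma^{=1}$ is positive on the extremal ray. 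That positivity is supplied by Lemma~\ref{numtrov.div.lem}, using that $g^*(\Delta^{=1})$ is an effective, numerically $g$-trivial divisor with support exactly $E=\Gamma^{=1}$; this is where the $\mathbb{Q}$-Cartier hypothesis enters for part~(2). For part~(1) there is a preliminary reduction to $\mathbb{Q}$-factorial $X$ via a small modification and further blowups of $Y$, and that reduction only preserves the simple-homotopy type, which is why~(1) is weaker than~(2). None of this is visible from your blowup-tower picture; the mechanism that makes the dual complex shrink is the MMP, not a factorization of the resolution.
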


For a  simple normal crossing divisor
$ E\subset X$ the intersections of the various $E_i\subset E$
 are also the log canonical centers of the pair
$(X,E)$, thus one can also view the  dual complex
as describing the combinatorics of log canonical centers. 

For a general log canonical pair $(X,\Delta)$ these two approaches give
different objects and it is of interest to study the cases
when they are the same. This leads to the 
concept of quotient-dlt pairs, see Definition \ref{qdlt.defn}.

Quotient-dlt pairs constitute a useful subclass of 
log canonical pairs which is preserved by
Fano contractions. In  Section \ref{sec.quot.dlt} 
this leads to a  short proof of
the following, which extends  earlier results of 
\cite{Kollar07, HX09}.

\begin{thm}\label{t-RC}
Let $f\colon X\to (0\in C)$ be a flat projective morphism to a
 smooth, pointed  curve over ${\mathbb C}$.
 Assume that the general fibers $X_t$ are smooth, rationally connected
and $F_0:=\red f^{-1}(0)$ is a simple normal crossing divisor. Then 
  $\D(F_0)$ is contractible. 
\end{thm}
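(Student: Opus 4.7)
The plan is to run a relative minimal model program on $(X, F_0)$ over $C$ and induct on $\dim X$. Since $F_0$ is SNC, the pair $(X, F_0)$ is dlt in a neighborhood of $F_0$. On a general fiber $X_t$ the divisor $F_0$ restricts to zero, so $(K_X + F_0)|_{X_t} \equiv K_{X_t}$, which is not pseudoeffective because $X_t$ is smooth and rationally connected. Hence (after a small $\mathbb Q$-factorialization) we can run a $(K_X + F_0)$-MMP relative to $C$ that terminates in a Mori fiber space $\pi\colon X' \to Z$ over $C$.

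I would argue that at each divisorial contraction or flip $\phi_i\colon X_i \dashrightarrow X_{i+1}$, the dual complex of the central fiber changes only by simple homotopy equivalence. The tool is Theorem \ref{t-sdr}: choose a common log resolution $W \to X_i, X_{i+1}$ with SNC preimage $E_W$ of the central fiber; then both $\D((F_0)_i)$ and $\D((F_0)_{i+1})$ are simple-homotopy equivalent to $\D(E_W)$ by Theorem \ref{t-sdr} applied to the dlt pairs $(X_i, (F_0)_i)$ and $(X_{i+1}, (F_0)_{i+1})$, where $(F_0)_\bullet$ is the union of log canonical centers $\Delta^{=1}$. It therefore suffices to prove contractibility of the dual complex of the central fiber $F'_0$ of the Mori fiber space $\pi$.

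Finally I would analyze the Fano contraction $\pi\colon X' \to Z$ over $C$. If $\dim Z < \dim X$, then $Z \to C$ is flat with general fibers rationally connected (being birational images of general fibers of $f$), so by the inductive hypothesis $\D\bigl((Z_0)_{\red}\bigr)$ is contractible. I would then use the machinery of Section \ref{sec.quot.dlt}: quotient-dlt is preserved under Fano contractions, and the log canonical centers of $(X',F'_0)$ sitting over a fixed log canonical center of $(Z,(Z_0)_{\red})$ assemble into a cone-like piece of $\D(F'_0)$, giving a collapse of $\D(F'_0)$ onto $\D\bigl((Z_0)_{\red}\bigr)$. If instead $\dim Z = 0$, then $X'$ itself is a Fano variety with dlt boundary $F'_0$, and the contractibility of $\D(F'_0)$ follows directly since $F'_0 \sim_{\mathbb Q} -K_{X'}$ forces the existence of a minimal log canonical center whose cone structure collapses the whole dual complex.

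The main obstacle is the final step: producing the deformation retraction of $\D(F'_0)$ onto $\D\bigl((Z_0)_{\red}\bigr)$ along the Fano contraction. This is where the quotient-dlt formalism is essential, because a Fano contraction need not preserve dlt but does preserve quotient-dlt, and it is on quotient-dlt pairs that one can identify fibers of $\pi$ with cones over cells of the base's dual complex. Once this fiberwise cone structure is established, the rest is formal: the MMP steps are handled uniformly by Theorem \ref{t-sdr}, and the induction starts at the trivial base case where the central fiber is a point.
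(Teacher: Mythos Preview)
Your overall strategy---run an MMP over $C$, track the dual complex through the steps, then handle the Mori fiber space by induction on dimension using the qdlt formalism---is exactly the paper's approach (Theorem~\ref{t-rat}). But several steps in your execution are off.

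\textbf{The induction does not close as stated.} You induct on Theorem~\ref{t-RC}, which requires the reduced central fiber to be simple normal crossing. After the Fano contraction $\pi\colon X'\to Z$, the pair $\bigl(Z,(Z_0)_{\red}\bigr)$ is only qdlt (Proposition~\ref{gdlt.fano.contr.prop}), not snc, so you cannot feed it back into Theorem~\ref{t-RC}. The paper fixes this by proving the stronger qdlt statement (Theorem~\ref{t-rat}) and inducting on that; at the start of the induction one reduces to the snc case via Theorem~\ref{t-sdr}.

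\textbf{The Fano contraction step is simpler than you think.} There is no ``cone-like piece'' or collapse along $\pi$. Because $\rho(X'/Z)=1$ and every component of $F'_0$ is $\pi$-vertical, each component of $F'_0$ maps isomorphically (as a point of the dual complex) to a distinct divisor on $Z$, and intersections match up. Proposition~\ref{gdlt.fano.contr.prop} gives a natural \emph{identification} $\D(F'_0)\cong\D\bigl((Z_0)_{\red}\bigr)$, not merely a collapse.

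\textbf{The base case is wrong.} The MMP is run over $C$, so $Z\to C$ is surjective and $\dim Z\geq 1$; the case $\dim Z=0$ never occurs, and your argument for it (``minimal log canonical center whose cone structure collapses the whole dual complex'') is not a proof. The actual base case is $Z=C$: then $\rho(X'/C)=1$ forces $\red(f')^{-1}(0)$ to be irreducible, so its dual complex is a point. Also, the general fiber $Z_t$ is not a ``birational image'' of $X_t$; it is the image of $X'_t$ under a Fano contraction, and is rationally connected because images of rationally connected varieties are rationally connected.

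\textbf{A simplification for the MMP steps.} Your common-log-resolution argument via Theorem~\ref{t-sdr} works, but the paper's route is shorter: since $f^{-1}(0)$ is numerically $f$-trivial with support exactly $\Delta^{=1}$, Corollary~\ref{some.mmp.collapses.cor} applies directly and gives that $\D(\Delta)$ \emph{collapses} (not just simple-homotopy equivalent) to $\D(\Delta')$ through the entire MMP.
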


\begin{say}[Topological remarks] 
One should think of  elementary collapses and their inverses 
(see Definition \ref{collapse.defn}) as the
 obvious homotopy equivalences.
A sequence of them gives simple-homotopy equivalences.

By a theorem of Whitehead, every homotopy equivalence
between simply connected simplicial complexes 
is homotopic to a simple-homotopy equivalence, but
this can fail when $\pi_1\neq 1$; see \cite{Coh73}.

Being collapsible is much stronger than being
contractible. For instance, the {\it dunce hat} or
{\it Bing's house with two rooms} are
 contractible, even simple-homotopy equivalent to a point,
yet they are not collapsible; cf. \cite{zee61}.

 We do not have any examples 
as in Theorems \ref{c-dlt}   or \ref{t-RC}
where the dual complex is not collapsible.
There are some indications, for example
 Theorems \ref{c-dlt-gal}--\ref{t-RC-gal}, that dual complexes
coming from algebraic geometry are somewhat special. 
\end{say}

{\it History.} The dual graph of the exceptional curves of 
resolutions of surface singularities has been studied  for a long time.
To the best of our knowledge, higher dimensional versions first
appeared in \cite{Kul77} for degenerations of K3 surfaces.
The members of the seminar \cite{FM83} essentially knew Theorem \ref{c-dlt} 
 for 3-folds and Theorem \ref{t-RC} for surfaces.

The connectedness conjecture of \cite{Sho92}, 
proved in \cite[Sec.17]{K-etal92},
was the first indication that exceptional divisors with
discrepancy $\leq -1$ play a special role.
The role of the dual complex of these divisors 
is studied in \cite{Fuj-c2, Fuj-c1}, especially
for lc singularities. The effect of the MMP on the dual complex
is studied in \cite{FujTak-c3} which 
 essentially leads to the proof of  Theorem~\ref{DMR-thm-isolated} 
for log canonical $X$. 

Much of the early work on the abundance conjecture
involves understanding how a dual complex changes under birational
maps. In retrospect, versions of  Proposition~\ref{crep.bir.dlt.models} 
are contained  in \cite{K-etal92, MR1284817, Fuj-c4}.

Theorem \ref{c-dlt}  is obvious if $\dim X=2$ 
  and the simple connectedness of $\DR(0\in X)$ 
follows from \cite{Koll93,Takayama03}. Much of 
the 3-dimensional case is proved in   \cite{Stepanov07}.
For quotient singularities in arbitrary dimensions
Theorem \ref{c-dlt} is proved in \cite{KS11}

It is  interesting to connect 
algebraic properties of singularities with topological properties of the
dual complex.
Using \cite{steenbrink} one can show that if
$(0\in X)$ is a rational singularity then
$\DR(0\in X)$ is $\mathbb{Q}$-acyclic, that is,  
$H^i\bigl(\DR(0\in X), \mathbb{Q}\bigr)=0$ for $i>0$. 
If $(0\in S)$ is a rational surface singularity
then $\DR(0\in S)$ is contractible. 
\cite{Stepanov06} asked if $\DR(0\in X)$ is contractible
for higher dimensional isolated rational singularities as well,
however the opposite turned out to be true.
 A finite simplicial complex $C$ occurs as a 
dual complex of a rational singularity iff 
$C$ is $\mathbb{Q}$-acyclic; see 
\cite[Thm.42]{KK11}, 
 \cite[Thm.8]{Kollar12} and  \cite[Thm.3]{Kollar13}.

By \cite[Thm.2]{Kollar13}
every finite simplicial complex occurs as a  dual complex of some 
isolated singularity.

After this, the dlt case seemed a natural candidate. It was explicitly
asked in \cite[Question~69]{Kollar12} and in
\cite{dF12}.
Our results also answer \cite[Question~68]{Kollar12}
and solve \cite[Conjecture~70]{Kollar12}.

 While we prove 
contractibility of various dual complexes,
the contraction and even the end result depend on auxiliary choices.
Nonetheless, in many examples there seems to be a canonical choice
and it would be useful to understand this situation better.

We were led to conjecturing Theorems \ref{c-dlt}--\ref{t-RC} partly through
arithmetic considerations. If $(0\in X)$ and the resolution $Y\to X$ are
 defined over a field $k$ then the Galois group
 $\operatorname{Gal}(\bar k/k)$ acts on  the dual complex $\DR(0\in X_{\bar k})$
over an algebraic closure $\bar k$.
The key results of the papers \cite{Kollar07,HX09,LX11} say that
this action has a fixed point. This suggests that in these settings
the dual complex has a natural retraction to a particular point
that is Galois-invariant. See Theorems \ref{c-dlt-gal}--\ref{t-RC-gal}
for the corresponding generalizations.

Our methods rely on the minimal model program or MMP. In general, 
the property of a pair to have simple normal crossings is not preserved 
 during MMP but being dlt is preserved.
Our main technical result studies how the
dual complex $\D(\Delta^{=1})$ changes as we run MMP
on a dlt pair  $(X,\Delta)$. 
In retrospect, the connectedness theorems 
\cite[Thm.5.48]{KM98} and \cite[Sec.4.4]{kk-singbook}
appear as the simplest special cases of Theorem \ref{t-sdr}.

\begin{ack}
We thank O.~Fujino  for helpful  comments and references.
Partial financial support  to TdF was provided  by  the NSF under grant number
 DMS-0847059 and by the Simons Foundation
Partial financial support  to JK  was provided  by  the NSF under grant number 
DMS-07-58275 and by the Simons Foundation. 
Partial financial support  to CX was provided  by  the NSF under grant number
DMS-1159175, by the 
Warnock Presidential Endowed Chair and by the Chinese government grant
``Recruitment Program of Global Experts.''
\end{ack}

\begin{say}[Notation and Conventions]\label{nor.conv.say}
We follow the notation and definitions of the books \cite{KM98, kk-singbook}.
We work over an algebraically closed field of characteristic 0. 

A \emph{pair} $(X,\Delta)$ consists of a normal variety $X$ and a
 $\mathbb Q$-divisor $\Delta$ on it. 
If all coefficients are in $[0,1]$ (resp.\ $(-\infty, 1]$), 
then we say that $\Delta$ is a \emph{boundary} (resp.\ \emph{sub-boundary}).

Let $f\colon Y\to X$ be a birational morphism.
If  $K_X+\Delta$ is $\mathbb{Q}$-Cartier then the formulas
$$
K_{Y}+\Delta_Y\sim_{\mathbb Q} f^*(K_X+\Delta)
\quad\mbox{and}\quad f_*\Delta_Y=\Delta
$$
define the $\mathbb Q$-divisor $\Delta_Y$, called the \emph{log pull-back}
of $\Delta$. (See  \cite[Sec.2.3]{KM98} or \cite[2.6]{kk-singbook} for details.)
For a divisor $E\subset Y$ its {\it discrepancy, } denoted by $a(E, X, \Delta)$,
is  the negative of its coefficient in $\Delta_Y $.

A pair $(X,\Delta)$ is {\it log canonical,} abbreviated as {\it lc,}
if $a(E, X, \Delta)\geq -1$ for every divisor $E$ and every
birational morphism $f\colon Y\to X$.

Let $(X,\Delta)$ be an lc pair. An irreducible subvariety
$Z\subset X$ is an  \emph{lc center} 
iff there is a  birational morphism $f\colon Y\to X$ 
and a divisor $E\subset Y$ such that $a(E,X,\Delta)=-1$ and
$f(E)=Z$.

We say that $(X,\Delta)$ is a \emph{simple normal crossing pair} if $X$ is 
smooth and $\Delta$ has simple normal crossing support.
Given any pair $(X,\Delta)$, there is a largest open set
$X^{\rm snc}\subset X$, called the 
{\it simple normal crossing locus} or {\it snc locus}
such that $\bigl(X^{\rm snc},\Delta|_{X^{\rm snc}}\bigr)$ is a 
simple normal crossing pair.

A log canonical pair $(X,\Delta)$ is  {\it divisorial log terminal,} 
abbreviated as  {\it dlt,} if none of the lc centers of
$(X,\Delta)$ is contained in $X\setminus X^{\rm snc}$.

Two  pairs $(X_i,\Delta_i)$ are called \emph{crepant birational equivalent} if 
there are proper, birational morphisms $f_i:Y\to X_i$ such that
 the log pull back of $\Delta_1$ on $Y$ 
equals the log pull back of $\Delta_2$ on $Y$.
\end{say}

\section{The dual complex of a dlt pair}

\begin{defn}\label{complexes.defn}
For cell complexes, we follow the terminology of
\cite{Hatcher}, see especially Section  2.1 and the Appendix.

The notion of an (unordered)  {\it $\Delta$-complex} is defined inductively.
A $0$-dimensional   $\Delta$-complex is a collection of 
points called {\it vertices.}
If  $k$-dimensional   $\Delta$-complexes  
and their attaching maps are already defined,
we obtain 
$(k+1)$-dimensional  $\Delta$-complexes as follows.

Let $C_k$ be a $k$-dimensional  $\Delta$-complex
and $\{S_i:i\in I_{k+1}\}$  a collection of
$(k+1)$-dimensional simplices. The boundary $\partial S_i$
is a $k$-dimensional   cell complex. An attaching or characteristic map
is a map $\tau_i:\partial S_i\to C_k$. 
Identifying the points of $\partial S_i$ with their image
in $C_k$ for every $i$ gives a $(k+1)$-dimensional  $\Delta$-complex
$C_{k+1}$. 
The $\leq k$-dimensional cells of $C_{k+1}$ are the cells in $C_k$ and
the images of the $S_i$ give the   $(k+1)$-dimensional cells.
For $j\leq k$ the cell complex $C_j$ is called the {\it $j$-skeleton}
of $C_{k+1}$.

If all the attaching maps are embeddings, then 
the resulting object is a {\it regular cell complex.}

A regular cell complex is called a {\it simplicial complex}
if the intersection of any 2 simplices  $S_1, S_2$ is a  single
(possibly empty) simplex $S_3$
which is a facet in both of them.
In Figure~\ref{fig:complexes} below we have 3  $\Delta$-complexes of dimension 1.
The first is a $\Delta$-complex that is not 
regular, the second is  regular but
 not simplicial and the third is  simplicial.

\begin{figure}[htb]
\begin{center}
\begin{tikzpicture}

\draw (1,1) circle (0.8);
\fill (0.2,1) circle (2pt) node[left] {$v_1$};

\draw (5,1) circle (0.8);
\fill (4.2,1) circle (2pt) node[left] {$v_1$};
\fill (5.8,1) circle (2pt) node[right] {$v_2$};

\draw (9,1) circle (0.8);
\fill (8.2,1) circle (2pt) node[left] {$v_1$};
\fill (9.8,1) circle (2pt) node[right] {$v_2$};
\fill (9,1.8) circle (2pt) node[above] {$v_3$};
\fill (9,0.2) circle (2pt) node[below] {$v_4$};

\end{tikzpicture}
\end{center}
\caption{}
\label{fig:complexes}
\end{figure}

 Let $D$ be a regular cell complex. For a simplex
$v\subset  D$ let $\st(v)$ be the (open) {\it star} of $v$,
that is, the union of the interiors of all cells whose closure contains $v$.
Its closure, denoted by  $\cst(v)$, is the {\it closed star}. 

If $D$ is a simplicial complex then their difference
$\lk(v)=\cst(v)\setminus \st(v)$ is the {\it link} of $v$.
Note that the stellar subdivision of the closed star $\cst(v)$ is a cone over 
the link $\lk(v)$. (This  fails if $D$ is not simplicial.)
\end{defn}

\begin{defn}\label{dual.cpx.defn}
Let $Z=\bigcup_{i\in I}Z_i$ be a pure dimensional scheme with
irreducible components $Z_i$. Assume that
\begin{enumerate}
\item each $Z_i$ is normal  and
\item for every $J\subset I$, if $\cap_{i\in J}Z_i$ is nonempty, then every connected component
of  $\cap_{i\in J}Z_i$ is  irreducible and has codimension $|J|-1$ in $Z$. 
\end{enumerate}
Note that  assumption (2) implies the following.
\begin{enumerate}\setcounter{enumi}{2}
\item For every $j\in J$, 
every irreducible component
of $\cap_{i\in J}Z_i$ is contained in a 
unique irreducible component
of $\cap_{i\in J\setminus\{j\}}Z_i$.
\end{enumerate}
The {\it dual complex} $\D(Z)$ of $Z$ is the regular cell complex obtained
as follows.
The vertices are the irreducible components of $Z$ and
to each irreducible component
of $W\subset \cap_{i\in J}Z_i$ we associate a  cell of dimension $|J|-1$.
This cell is usually denoted by $v_W$.

The attaching map is given by condition (3).
Note that $\D(Z)$ is a  simplicial complex
iff $\cap_{i\in J}Z_i$ is irreducible  (or empty)
for every $J\subset I$.

Let $X$ be a variety and $E$ a divisor on $X$.
If $\operatorname{Supp}(E)$ satisfies the conditions
(1--2), then 
$\D(E):=\D\bigl(\operatorname{Supp}(E)\bigr)$
is called the {\it dual complex}  of $E$.

Note that  conditions (1--2) are satisfied in three important cases:
\begin{enumerate}\setcounter{enumi}{3}
\item  $X$ is a smooth and  $E$ is a simple normal crossing divisor.
\item  $(X,\Delta)$ is a dlt pair and 
$E:=\Delta^{=1}$  is the set of divisors whose coefficient in
$\Delta $ equals 1.
\item  We introduce quotient-dlt pairs in  Definition \ref{qdlt.defn}
with conditions (1--2) in  mind.
\end{enumerate}
Here the claim (4) is clear and (5) is proved in
\cite[Sec.3.9]{Fujino}; see also
\cite[Thm.4.16]{kk-singbook}. 
In the dlt case, let
$X^{\rm snc}\subset X$ be the simple normal crossing locus.
Then $\D\bigl(\Delta^{=1}\bigr)=\D\bigl((\Delta|_{X^{\rm snc}})^{=1}\bigr)$,
thus the dual complex is insensitive to the singularities of
$(X,\Delta)$.

In the above cases (4--6) the cells of $\D(E)=\D(\Delta^{=1})$
are identified with the log canonical centers of
$(X,\Delta)$. Frequently these are also called the
{\it strata} of $E$ or of $\Delta^{=1}$.

Even if $X$ is  smooth and  $E=\bigcup_{i\in I}E_i$ is a 
simple normal crossing divisor, the dual complex 
$\D(E)$ need not be  a simplicial complex, but this can be achieved
after some blow-ups as in Remark \ref{make.snc.say}.

It is possible to define the dual complex even if $Z$ does not satisfy
the conditions (1--2). However, there seem to be several ways to
do it and we do not know which variant is the most useful.
In this paper we only use the cases arising from (4--6). 
\end{defn}

\begin{say}[Blowing-up and the dual complex]\label{bu.and.baryc.say}
Let $X$ be a  smooth variety and  $E=\bigcup_{i\in I}E_i$  a 
simple normal crossing divisor. Let $Z\subset X$ be a smooth,
irreducible subvariety that has only simple normal crossing  with $E$;
see \cite[3.24]{k-res}.

Let $\pi:B_ZX\to X$ denote the blow up of $Z$ with exceptional divisor
$E'_0$  (assuming $0\not\in I$).
 Let $E'_i:=\pi^{-1}_*(E_i)$ denote the birational transform of
$E_i$ and $E':=\pi^{-1}_*(E)= \bigcup_{i\in I}E'_i$.
Then $E'_0\cup E'$ is a 
simple normal crossing divisor.
By a direct computation we see the following.
\begin{enumerate}
\item If  $Z$ is  a stratum of $E$ then
$\D\bigl(E'_0\cup E'\bigr)$ is obtained from $\D\bigl(E\bigr)$
by the {\it stellar subdivision} of $\D\bigl(E\bigr)$
corresponding to an interior point of  the cell  $v_Z$.
\item If $Z$ is not a stratum of $E$ then
$\D\bigl(E'\bigr)=\D\bigl(E\bigr)$.
\item If $Z$ is not a stratum of $E$ but $Z\subset E$ then
$\D\bigl(E'_0\cup E'\bigr)$ is obtained from $\D\bigl(E\bigr)$
as follows. 

Let $E_Z$ be the smallest stratum that contains $Z$
and $v_Z$  the corresponding simplex. Note that
$E_Z$ is an irreducible component of some intersection
$\bigcap_{i\in J} E_i$.
Let  $\D(Z)$ denote the dual  complex of
$\sum_{i\in I\setminus J}E_i|_Z$. 
Then the dual complex $\D\bigl(E'|_{E'_0}\bigr)$ is
the join  $v_Z* \D(Z)$ (see e.g. \cite[Page 9]{Hatcher}).
There is a natural map
$\tau_L:\D(Z)\to \Lk(v_Z)$ hence we get a map
$\tau_S: v_Z* \D(Z)\to \cst(v_Z)$.
We can identify $v_Z* \D(Z)$
with a subcomplex of the  cone
$\cone\bigl(v_Z* \D(Z)\bigr) $ and attach the latter
 to $\D\bigl(E\bigr)$
using $\tau_S$ to get $\D\bigl(E'_0\cup E'\bigr)$.

 Since the cone over the join  retracts (even collapses) to the
join, we see that
$\D\bigl(E'_0\cup E'\bigr)$  retracts (even collapses) to $\D\bigl(E\bigr)$.
(However, they need not be PL homeomorphic;
even their dimension can be different.)
\end{enumerate}

{\it Note on terminology.}  Let $D$ be a regular 
cell complex and $p\in D$ a point.
The {\it stellar subdivision} of $D$ with center $p$ is obtained as follows.
\begin{enumerate}
\item[i)] The closed cells not containing $p$ are unchanged.
\item[ii)] If  $v$ is a closed cell containing $p$, we replace it
by all the cells that are spans $\langle p, w\rangle$
where $w\subset v$ is any face non containing $p$.
\end{enumerate}
(Some authors seem to call this a barycentric subdivision, but
this goes against standard usage in PL topology  \cite{Hatcher, Spanier}.)

\medskip

Let $(X, \Delta)$ be an snc pair where $\Delta$ is a sub-boundary
and apply the above observations to $E:=\Delta^{=1}$.
Write  
$$
K_{B_ZX}+\Delta_Z\sim_{\mathbb Q}\pi^*\bigl(K_X+\Delta\bigr)
$$
and set $E_Z:=(\Delta_Z)^{=1}$.
Note that $E_Z=E'$ if $Z$ is not a stratum of $E$ and
$E_Z=E'_0\cup E'$ if $Z$ is a stratum of $E$. Thus we conclude the following.
\begin{enumerate}\setcounter{enumi}{3}
\item  If $\bigl(K_{B_ZX},\Delta_Z\bigr)$ is obtained from
$(K_X,\Delta)$ by blowing up  a subvariety $Z\subset X$ that has
 simple normal crossing  with $\Delta$ then
$\D\bigl(\Delta_Z^{=1}\bigr)$ is PL homeomorphic to
$\D\bigl(\Delta^{=1}\bigr)$.
\end{enumerate}
\end{say}

\begin{rem} \label{make.snc.say}
The barycentric subdivision of any dual complex is simplicial. 
For  the dual complex of a simple normal crossing divisor,
it can be realized by blow-ups as follows.

Let $X$ be a  smooth variety and  $E=\bigcup_{i\in I}E_i$  a 
simple normal crossing divisor.
Let $Z_i\subset X$ denote the union of all $i$-dimensional strata
of $E$. Consider a sequence of blow-ups
$$
\Pi: \tilde X:= X_{n-1}\stackrel{\pi_{n-2}}{\longrightarrow}
X_{n-2}\to \cdots \to X_1\stackrel{\pi_{0}}{\longrightarrow} X_0:=X
$$
where $\pi_i:X_{i+1}\to X_i$ denotes the blow-up of the
birational transform 
$ (\pi_0\circ\cdots \pi_{i-1})^{-1}_*Z_i\subset X_i$.

Each blow-up center is smooth and $\Pi^{-1}E$ is
a simple normal crossing divisor whose 
dual complex 
$\D\bigl( \Pi^{-1}E\bigr)$ is  the stellar subdivision of $\D(E)$. 

Finally, we know that a barycentric subdivision can be written as a sequence of stellar subdivisions. 
\end{rem}

The following
invariance result for the dual complexes of
crepant-birational dlt pairs is a considerable
strengthening of \cite[4.35]{kk-singbook}.

\begin{prop} \label{crep.bir.dlt.models}
  Let $f_i: (X_i, \Delta_i)\to S$ be proper morphisms.
Assume that the $(X_i, \Delta_i)$ are dlt and crepant-birational
to each other over $S$. Then the dual complexes
$\D\bigl(\Delta_i^{=1}\bigr)$ are PL homeomorphic to
each other.
\end{prop}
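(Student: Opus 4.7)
The plan is to pass to a common snc log resolution and then invoke the Weak Factorization Theorem of Abramovich--Karu--Matsuki--W\l{}odarczyk, reducing to a sequence of blow-ups with snc centers so that (\ref{bu.and.baryc.say}.4) applies at each step.

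First, by resolution of singularities I would construct an snc pair $(Y,\Delta_Y)$ together with projective birational morphisms $g_i\colon Y\to X_i$ satisfying $K_Y+\Delta_Y\sim_{\mathbb{Q}}g_i^*(K_{X_i}+\Delta_i)$; the crepant-birationality hypothesis guarantees that the same $\Delta_Y$ works for both $i$. By symmetry it is enough to prove, for a single such $g\colon (Y,\Delta_Y)\to (X,\Delta)$ with $(X,\Delta)$ dlt and $(Y,\Delta_Y)$ snc, that $\D(\Delta_Y^{=1})$ is PL homeomorphic to $\D(\Delta^{=1})$.

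Next I would restrict to the snc locus. Definition~\ref{dual.cpx.defn} already records the equality $\D(\Delta^{=1})=\D((\Delta|_{X^{\rm snc}})^{=1})$. On the $Y$ side, every stratum of the snc divisor $\Delta_Y^{=1}$ is an lc center of $(Y,\Delta_Y)$; by crepancy of $g$, its image is an lc center of the dlt pair $(X,\Delta)$, and hence is not contained in $X\setminus X^{\rm snc}$. Consequently every stratum of $\Delta_Y^{=1}$ meets $U:=g^{-1}(X^{\rm snc})$ in a dense open subset, so $\D(\Delta_Y^{=1})=\D((\Delta_Y|_U)^{=1})$. We are thus reduced to a crepant birational morphism $g|_U\colon(U,\Delta_Y|_U)\to(X^{\rm snc},\Delta|_{X^{\rm snc}})$ between two snc sub-boundary pairs of smooth varieties. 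Weak Factorization then expresses $g|_U$ as a finite chain of smooth blow-ups and blow-downs whose centers have simple normal crossings with the ambient sub-boundary divisors at every intermediate stage; by (\ref{bu.and.baryc.say}.4), each such blow-up induces a PL homeomorphism of the dual complexes of the $\Delta^{=1}$ part, and composing these gives the required PL homeomorphism.

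The main obstacle I anticipate is the snc-locus reduction: one must carefully verify that the strata of $\Delta_Y^{=1}$ on $Y$ and of $\Delta_Y^{=1}|_U$ on $U$ correspond bijectively with matching incidence structure, which ultimately rests on the full dlt hypothesis together with the preservation of lc centers under crepant birational morphisms. A secondary technical point is invoking the snc-preserving, relative form of Weak Factorization, which is standard but must be applied with care so that the sub-boundary structure --- and hence the hypothesis of (\ref{bu.and.baryc.say}.4) --- is maintained at each step of the factorization.
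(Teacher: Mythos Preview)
Your proposal is correct and follows essentially the same strategy as the paper: reduce to snc sub-boundary pairs, apply Weak Factorization, and invoke (\ref{bu.and.baryc.say}.4) at each blow-up/down. The only variation is in the snc reduction---the paper quotes Szab\'o's theorem to obtain, for each $(X_i,\Delta_i)$, a log resolution that is an isomorphism over the snc locus (so $\D(\Delta_Y^{=1})$ is identified with $\D(\Delta_i^{=1})$ for free), and then factors directly between the two resulting snc models, whereas you pass through a common resolution and argue the restriction to $g^{-1}(X^{\rm snc})$ by hand; your lc-center argument for that step is correct.
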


Proof. By \cite{szabo}, every dlt pair $(X,\Delta_X)$
has a log resolution  $g: (Y, \Delta_Y)\to (X,\Delta_X)$
such that  $\D\bigl(\Delta_Y^{=1}\bigr)$ is naturally identified
with $\D\bigl(\Delta_X^{=1}\bigr)$. Thus we may assume that
the $(X_i, \Delta_i)$ are snc pairs but now the
$\Delta_i $ are only sub-boundaries.

Next we use the  factorization theorem of \cite{AKMW02}
which says that there is a sequence of 
smooth blow-ups  as in
Paragraph \ref{bu.and.baryc.say} and their inverses
$$
X_1=Y_0\stackrel{\pi_0}{\map} Y_1\stackrel{\pi_1}{\map} \cdots 
\stackrel{\pi_{m-1}}{\map} Y_m  \stackrel{\pi_m}{\map} Y_{m+1}
\stackrel{\pi_{m+1}}{\map} \cdots \stackrel{\pi_{r-1}}{\map} Y_r=X_2.
$$
Moreover, we may assume that the induced maps 
$\pi_0^{-1}\circ \cdots\circ \pi_{i-1}^{-1}: Y_i\map X_1$
 are morphisms for $i\leq m$ and 
the induced maps 
$\pi_{r-1}\circ \cdots\circ \pi_{i}:Y_i\map X_2$ are morphisms for $i\geq m$.

Let $\Theta_i$ be the log pull-back of $\Delta_1$
for  $i\leq m$ and the log pull-back of $\Delta_2$
for  $i\geq m$ by the above morphisms.
Note that the two definitions of $\Theta_m$ agree since
the $(X_i, \Delta_i)$ are  crepant-birational
to each other over $S$.

We use (\ref{bu.and.baryc.say}.4) to show that at each step
$\D\bigl(\Theta_i^{=1}\bigr)$ changes either by a stellar subdivision or
its inverse.

Thus the $\D\bigl(\Delta_i^{=1}\bigr)$ are obtained from each other
by repeatedly adding and removing stellar subdivisions.\qed

\begin{comp} \label{crep.bir.dlt.models.rem}
Using the above notation, let $Z\subset S$ be a closed subset.
Let $\Theta_{i,Z}^{=1}\subset \Theta_i^{=1}$ be the union of those
divisors that are contained in $f_i^{-1}(Z)$.  Assume further that
every lc center of $\Theta_i^{=1}$ contained in $f_i^{-1}(Z)$
is also an lc center of $\Theta_{i,Z}^{=1}$. 

Then the above proof shows that the dual complexes
$\D\bigl(\Delta_{i,Z}^{=1}\bigr)$ are PL homeomorphic to
each other.
\end{comp}

\medskip

Although we do not need this, it is worth remarking that
if $X_1\map X_2$ is an isomorphism in codimension 1 then
we can go between the $X_i$ by a series of {\it flops}.
The topological analogs of these are the
{\it Pachner moves} or {\it bistellar flips,}
see \cite{pachner}.
\medskip

We will need several types of partial resolutions of a pair 
$(X,\Delta) $.

\begin{defn}\label{dlt-mod.defn}
Let $X$ be a normal variety and $\Delta$ a boundary on $X$.
Let $g':(X',\Delta')\to (X,\Delta)$ be a proper, birational morphism
where $\Delta'=E+(g')^{-1}_*\Delta$ and 
$E$ is the sum of all divisors in $\ex(g')$.
We say that $g':(X',\Delta')\to (X,\Delta)$ is a 
$$
\left.%
\begin{array}{c}
\mbox{dlt} \\
\mbox{qdlt} \\
\mbox{lc} 
\end{array}
\right\}\mbox{ modification if $(X',\Delta')$ is }\left\{
\begin{array}{c}
\mbox{dlt} \\
\mbox{qdlt} \\
\mbox{lc} 
\end{array}
\right\}\mbox{ and $K_{X'}+\Delta'$ is }\left\{%
\!
\begin{array}{l}
\mbox{$f$-nef.}\\
\mbox{$f$-nef.}\\
\mbox{$f$-ample.} 
\end{array}
\right.
$$
We frequently denote a dlt modification by
$g^{\rm dlt}: \bigl(X^{\rm dlt}, 
\Delta^{\rm dlt}\bigr)\to (X, \Delta)$
and an  lc modification by
$g^{\rm lc}: \bigl(X^{\rm lc}, 
\Delta^{\rm lc}\bigr)\to (X, \Delta)$.
For  qdlt, see Definition \ref{qdlt.defn}.

Every pair $(X, \Delta) $ 
such that $K_X+\Delta$ is $\mathbb Q$-Cartier 
has a unique log canonical modification and
(usually non-unique) dlt modifications by \cite{OX12}.
(Conjecturally, both  should exist for
any $(X, \Delta)$.) We also remark that because we require that $K_X+\Delta$ is nef, this is indeed stronger than the existence of dlt blow up proved by Hacon (see \cite[10.4]{Fuj}).

 Note that $\ex(g^{\rm lc})$ has pure codimension 1,
but this need not hold for $\ex(g^{\rm dlt})$.
However, we prove in Lemma \ref{3.3.1.4.HMX12.lem}
 that there is a dlt modification
such that $\ex(g^{\rm dlt})$ has pure codimension 1.
\end{defn}

We can now define the most important dual complexes
associated to a pair $(X, \Delta) $.

\begin{defn}\label{DR.W.X.def}
Let $X$ be a normal variety and $W\subset X$ a closed subvariety.
Let $p:Y\to X$ be a resolution of singularities such that
$\supp p^{-1}(W)$ is a simple normal crossing divisor.
By \cites{Stepanov06,Thuillier07,Stepanov07, ABW11, Payne11},
the simple-homotopy class of the dual complex
$\D\bigl(\supp p^{-1}(W)\bigr)$ is independent of
the choice of $p$. We denote it by
$\DR(W\subset X)$.
\end{defn}

\begin{defn}\label{DMR.defn}
The {\it non-klt locus} of $(X, \Delta) $ is the unique smallest subscheme
$W\subset X$ such that $\bigl(X\setminus W, \Delta|_{X\setminus W}\bigr)$ is klt.
It is frequently denoted by  $\operatorname{non-klt} (X, \Delta) $.
It can be written as 
$g^{\rm dlt}\bigl((\Delta^{\rm dlt})^{=1}\bigr)$ for any
dlt modification.

Any two dlt modifications are crepant-birational
to each other over $X$. (Although not explicitly stated,
this is what the proof of \cite[3.52]{KM98} gives.)
 Hence, by Proposition \ref{crep.bir.dlt.models},
 the dual complex
$$
\DMR(X,\Delta):=\D\bigl((\Delta^{\rm dlt})^{=1}\bigr)
$$
is independent of the choice of $g^{\rm dlt}:X^{\rm dlt}\to X$, up-to
PL homeomorphism.

We stress that $\DMR(X,\Delta) $ is a PL homeomorphism equivalence class
but $\DR(W\subset X)$ is a simple-homotopy equivalence class.

Note that $\DMR(X,\Delta) $ does depend on
$\Delta$, not just on the pair $(W\subset X)$. 
This will be quite useful to us. In some cases we will be able to compute
$\DR(W\subset X)$ by choosing a suitable $\Delta$
and then computing $\DMR(X,\Delta) $.
\end{defn}

\section{Running MMP}

Let $(X,\Delta)$ be an lc pair and
 $f \colon X \dasharrow Y$  a step  (a divisorial contraction or a flip) 
in an $(X,\Delta)$-MMP. 
Set $\Delta_Y := f_*\Delta$; then $(Y, \Delta_Y)$ is also an lc pair.
Furthermore, if $(X,\Delta)$ is dlt  then so is
$(Y, \Delta_Y)$ by \cite[3.44]{KM98}.

Once we introduce quotient-dlt pairs in Section \ref{sec.quot.dlt},
we see that all the proofs and results in this section hold for
quotient-dlt pairs.

Let $Z\subset X$ be an lc center of $(X,\Delta)$.
We say that $f$ {\it contracts} $Z$ if $Z\subset \ex(f)$.
If $Z\not\subset \ex(f)$ then
$f_*(Z)$ is an lc center of $(Y, \Delta_Y)$.
Since discrepancies strictly increase for every divisor whose
center is contained in $\ex(f)$, we see that
every lc center of $(Y, \Delta_Y)$ is obtained this way from a
non-contracted lc center of $(X,\Delta)$.
Using this observation repeatedly, we conclude the following.

\begin{lem}\label{subcomp.lem} Let $(X,\Delta)$ be a dlt  pair and
 $f \colon X \dasharrow Y$  a birational map obtained
by running an $(X,\Delta)$-MMP. 
Set $\Delta_Y := f_*\Delta$. 
Then the dual complex $\D(\Delta_Y^{=1})$ is 
naturally a subcomplex of $\D(\Delta^{=1})$. \qed
\end{lem}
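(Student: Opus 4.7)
The plan is to verify the claim for a single step of the MMP --- a flip or a divisorial contraction --- and then iterate finitely many times. The essential input is already recorded in the paragraph immediately above: for such an $f$, the assignment $W\mapsto f_*W$ is a surjection from the non-contracted lc centers of $(X,\Delta)$ onto the lc centers of $(Y,\Delta_Y)$. Since $(X,\Delta)$ and $(Y,\Delta_Y)$ are both dlt, Definition~\ref{dual.cpx.defn} identifies the cells of $\D(\Delta^{=1})$ and of $\D(\Delta_Y^{=1})$ with the respective collections of lc centers, so this surjection will provide a candidate embedding of cells in the direction $v_{f_*W}\mapsto v_W$, once we check injectivity and compatibility with the cell structure.

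Injectivity is the one point not immediately supplied by the preceding paragraph: if $W_1,W_2\not\subset\ex(f)$ satisfy $f_*W_1=f_*W_2$, then because $f$ is an isomorphism away from a subset of codimension at least one (divisorial contraction) or two (flip), each $W_i$ is the closure of its image under $f$ on this open locus, which forces $W_1=W_2$.

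For compatibility with the cell structure, I would first check that dimensions are preserved. If $W$ is a component of $\bigcap_{i\in J}E_i$ so that $v_W$ has dimension $|J|-1$, then each $E_i$ with $E_i\supset W$ is itself non-contracted, since $E_i\subset\ex(f)$ would force $W\subset\ex(f)$. Hence $f_*W$ is a component of $\bigcap_{i\in J}f_*E_i$ and $v_{f_*W}$ also has dimension $|J|-1$. For face relations: if $W'\supset W$ gives a face of $v_W$, then $W'$ is non-contracted as well (since $W\subset W'$ and $W$ meets the locus where $f$ is an isomorphism), so $f_*W'\supset f_*W$ gives the corresponding face of $v_{f_*W}$; conversely any face $V'\supset f_*W$ of $v_{f_*W}$ has the form $V'=f_*W'$ for a unique non-contracted $W'$ by the established bijection, and $W\subset f^{-1}(V')$ together with irreducibility of $W'$ forces $W\subset W'$.

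This shows that after a single MMP step $\D(\Delta_Y^{=1})$ embeds as a subcomplex of $\D(\Delta^{=1})$. Since a birational map obtained by running an $(X,\Delta)$-MMP is a finite composition of flips and divisorial contractions, iterating the single-step inclusion finishes the proof. The only mildly technical point is the injectivity argument above; the rest is the formal dictionary between cells of the dual complex and lc centers of a dlt pair.
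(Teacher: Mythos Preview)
Your proposal is correct and follows the same approach as the paper: the lemma is stated with a \qed\ immediately after it, because the paper regards it as a direct consequence of the paragraph preceding it (the bijection between non-contracted lc centers of $(X,\Delta)$ and lc centers of $(Y,\Delta_Y)$, applied step by step). You have simply made explicit the injectivity, dimension-preservation, and face-compatibility checks that the paper leaves to the reader; these are all fine as written.
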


In general, the inclusion $\D(\Delta_Y^{=1})\subset \D(\Delta^{=1})$
is not a homotopy equivalence, but this happens in
many interesting cases. The following examples illustrate the main
possibilities.

\begin{exmp} Let $Z=(x_1x_2-x_3x_4=0)\subset {\mathbb C}^4$ be the quadric cone.
Consider the planes 
$A_1:=(x_1=x_3=0), A_2:=(x_2=x_4=0), B_1:=(x_1=x_4=0), B_2:=(x_2=x_3)=0$.
The two small resolutions are
$Y':=B_{A_1}X=B_{A_2}X$ and $Y'':=B_{B_1}X=B_{B_2}X$.
By explicit computation we see the following.

(1) $f:Y'\map Y''$ is a flip for 
$\bigl(Y', A'_1+A'_2\bigr)$. The corresponding dual complexes are as in Figure~\ref{fig1}.

\begin{figure}[htb]
\begin{center}
\begin{tikzpicture}

\path[draw] (2,0)--(0,0);
\fill (0,0) circle (2pt) node[below] {$v_{A_1'}$};
\fill (2,0) circle (2pt) node[below] {$v_{A_2'}$};

\draw[dotted,->] (4,0) -- (6,0);

\fill (8,0) circle (2pt) node[below] {$v_{A_1''}$};
\fill (10,0) circle (2pt) node[below] {$v_{A_2''}$};

\end{tikzpicture}
\end{center}
\caption{}
\label{fig1}
\end{figure}

This is not a homotopy equivalence.

(2) $f:Y'\map Y''$ is a flip for 
$\bigl(Y', A'_1+A'_2+B'_1\bigr)$. The corresponding dual complexes are as in Figure~\ref{fig2}.

\begin{figure}[htb]
\begin{center}
\begin{tikzpicture}

\path[draw, fill=gray!20] (2,0)--(0,0)--(0,2)--cycle;
\fill (0,0) circle (2pt) node[below] {$v_{B_1'}$};
\fill (0,2) circle (2pt) node[above] {$v_{A_2'}$};
\fill (2,0) circle (2pt) node[below] {$v_{A_1'}$};

\draw[dotted,->] (4,1) -- (6,1);

\path[draw] (10,0)--(8,0)--(8,2);
\fill (8,0) circle (2pt) node[below] {$v_{B_1''}$};
\fill (8,2) circle (2pt) node[above] {$v_{A_2''}$};
\fill (10,0) circle (2pt) node[below] {$v_{A_1''}$};

\end{tikzpicture}
\end{center}
\caption{}
\label{fig2}
\end{figure}

This is a homotopy equivalence, even a collapse.

(3) $f:Y'\map Y''$ is a flop for 
$\bigl(Y', A'_1+A'_2+B'_1+B'_2\bigr)$. The corresponding dual complexes are as in 
Figure~\ref{fig3}.

\begin{figure}[htb]
\begin{center}
\begin{tikzpicture}

\draw[fill=gray!20] (0,0) rectangle (2,2);
\fill (0,0) circle (2pt) node[below] {$v_{B_1'}$};
\fill (0,2) circle (2pt) node[above] {$v_{A_2'}$};
\fill (2,0) circle (2pt) node[below] {$v_{A_1'}$};
\fill (2,2) circle (2pt) node[above] {$v_{B_2'}$};
\draw (0,2) -- (2,0);

\draw[dotted,->] (4,1) -- (6,1);

\draw[fill=gray!20] (8,0) rectangle (10,2);
\fill (8,0) circle (2pt) node[below] {$v_{B_1''}$};
\fill (8,2) circle (2pt) node[above] {$v_{A_2''}$};
\fill (10,0) circle (2pt) node[below] {$v_{A_1''}$};
\fill (10,2) circle (2pt) node[above] {$v_{B_2''}$};
\draw (8,0) -- (10,2);

\end{tikzpicture}
\end{center}
\caption{}
\label{fig3}
\end{figure}

This is a PL homeomorphism, a composite of a stellar subdivison 
(whose center is the center of the square) and
its inverse.
\end{exmp}

In order to describe the general case precisely,
we need some concepts from simplicial complex theory,
especially the notion of  collapse.

\begin{defn} \label{collapse.defn}
Let $D$ be a regular cell complex.
Let $v$ be a cell in $D$ and $w$ a face of $v$. We say that
$(v,w)$ is a {\it free pair} if $w$ is not the face of any other
cell in $D$. The {\it  elementary collapse} of $(D, v,w)$ 
is the regular complex obtained from $D$ by removing
the interiors of the cells $v$ and $w$. This is clearly a homotopy equivalence.
A sequence of elementary collapses is called a 
{\it  collapse.}
A  regular complex $D$ is {\it  collapsible} if it
collapses to a point.

A map $g:C_1\to C_2$ of regular complexes is a
{\it simple-homotopy equivalence} if 
it is homotopic to a map 
obtained  by a sequence of  
elementary collapses and their inverses.
If the $C_i$ are simply connected, then 
every homotopy equivalence is a simple-homotopy equivalence.
However, in general not every homotopy equivalence is a 
simple-homotopy equivalence; the difference
is measured by the {\it Whitehead torsion.} 
For details and for proofs  see \cite{Coh73}.
\end{defn}

The following is our key result relating extremal contractions
to collapses of the dual complex.

\begin{thm} \label{subcomp.collapse.thm}
Let $(X,\Delta)$ be dlt  
and  $f \colon X \dasharrow Y$   a 
  divisorial contraction or  flip
corresponding to a $(K_X+\Delta)$-negative extremal ray $R$.
 Set $\Delta_Y := f_*\Delta$. 
Assume that  there is a prime divisor $D_0\subset \Delta^{=1}$ such that  $(D_0\cdot R)>0$.

Then $\D(\Delta^{=1})$ collapses to $\D(\Delta_Y^{=1})$. 
\end{thm}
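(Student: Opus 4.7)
The plan is to construct an explicit sequence of elementary collapses of $\D(\Delta^{=1})$ onto the subcomplex $\D(\Delta_Y^{=1})$, using $D_0$ as the anchor around which the cells to be removed are paired off. By Lemma \ref{subcomp.lem}, $\D(\Delta_Y^{=1})$ is precisely the subcomplex spanned by cells $v_W$ whose lc center $W$ is not contracted by $f$, i.e.\ $W\not\subset\ex(f)$. The \emph{contracted} cells $\{v_W:W\subset\ex(f)\}$ form an upward-closed subset of the face poset: whenever $v_W$ is contracted, every cell having $v_W$ as a face (corresponding to an lc center strictly inside $W$) is automatically contracted. The goal is to remove all of them by elementary collapses.

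To extract the combinatorial input from $(D_0\cdot R)>0$, observe that every irreducible curve $C\subset\ex(f)$ with $[C]\in R$ satisfies $(D_0\cdot C)>0$, so $D_0\not\subset\ex(f)$ and each such $C$ meets $D_0$ properly and nontrivially. Consequently, for any contracted lc center $W$ with $D_0\not\subset W$, the intersection $W\cap D_0$ is a proper nonempty intersection whose irreducible components, in the dlt setting, are again lc centers of $(X,\Delta)$ of codimension $\operatorname{codim}W+1$, lying inside $D_0\cap\ex(f)$. Choosing one such component $W^+$ canonically produces a pairing $v_W\leftrightarrow v_{W^+}$ between the contracted cells outside $\cst(v_{D_0})$ and those inside $\cst(v_{D_0})$, in which $v_{W^+}$ is a face of $v_W$ of codimension one.

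The final step will be to realize this pairing as an acyclic discrete Morse matching, collapsing the pairs in order of increasing $\dim W$: when $(v_W,v_{W^+})$ is treated, every contracted cell $v_Z$ with $Z\subsetneq W^+$ has already been removed in an earlier step, so $v_{W^+}$ is a free face of $v_W$ in what remains. The main obstacle I anticipate is the bijectivity of the matching --- equivalently, that for every contracted lc center $W'\subset D_0$ the unique component $W$ of $\bigcap_{i\in J(W')\setminus\{0\}}D_i$ containing $W'$ is itself contracted. This is where the hypothesis $(D_0\cdot R)>0$ must be used quantitatively rather than as a mere incidence statement: it should force the contracted locus to propagate from $W'$ across $D_0$ back to $W$, ruling out the scenario in which $v_{W'}$ would be a face of a non-contracted cell that would persist after collapse and obstruct the procedure. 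If $\D(\Delta^{=1})$ is not simplicial I would first pass to a barycentric subdivision via Remark \ref{make.snc.say} so that the notion of free face is unambiguous.
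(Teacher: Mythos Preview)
Your approach is the paper's: pair off the contracted cells using $D_0$ as a cone vertex and collapse. But the execution has real gaps.

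First, the face relation is inverted. Since $W^+\subset W\cap D_0$ has codimension $\operatorname{codim}W+1$ in $X$, the cell $v_{W^+}$ has dimension $\dim v_W+1$; thus $v_W$ is the codimension-one face of $v_{W^+}$, not conversely. The elementary collapse removes $(v_{W^+},v_W)$ with $v_W$ as the free face, and the freeness condition you must verify is that every contracted $v_Z$ with $Z\subsetneq W$ (not $Z\subsetneq W^+$) and $Z\neq W^+$ has already been removed.

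Second, and more seriously, ``choosing one such component $W^+$ canonically'' conceals the point where the argument needs geometric input. For the pairing to be a bijection between contracted cells outside $D_0$ and contracted cells inside $D_0$, you must know that $W\cap D_0$ is \emph{irreducible} for every contracted lc center $W\not\subset D_0$. If there are two components, both are contracted star-type cells with the same parent $W$; one of them is unmatched and blocks the collapse. The paper establishes irreducibility (its Case~2) by applying the connectedness theorem for minimal lc centers, Proposition~\ref{p-connect}, to the contraction $h|_W\colon W\to h(W)$.

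Third, the direction you flag as the ``main obstacle'' --- that for every contracted $W'\subset D_0$ the parent stratum $W$ is also contracted --- is likewise settled via Proposition~\ref{p-connect}, not by a direct numerical consequence of $(D_0\cdot R)>0$. The paper's Case~1 argues by contradiction: if $W=Z^+$ were not contracted, then $f_*(Z^+)$ is an lc center meeting $\ex(f^{-1})\subset f_*(D_0)$, and Proposition~\ref{p-connect} controls $D_0\cap Z^+$ near $h(Z)$ well enough to force $f_*(Z^+)\cap f_*(D_0)$ to lie in $\ex(f^{-1})$; but intersections of images of lc centers are unions of lc centers (Ambro), and $\ex(f^{-1})$ contains none. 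Your sentence ``it should force the contracted locus to propagate'' is pointing at this, but the mechanism is the uniqueness of minimal lc centers over each point, and you should invoke it explicitly.

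The barycentric subdivision is unnecessary; the collapse works directly in the regular cell complex once the bijection above is in hand.
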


Proof.  We distinguish two types of  contracted lc centers.

{\it Case 1.} Let $Z$ be a contracted lc center such that
$Z\subset D_0$.
 Then there is a subset $0\not\in J\subset I$
such that $Z$ is an irreducible component of $D_0\cap \bigcap_{i\in J}D_i$. Thus
there is a unique irreducible component $Z^+$ of $\cap_{i\in J}D_i$ such that
$Z$ is an irreducible component of $ D_0\cap Z^+$.   We claim that $Z^+\subset \ex(f)$.

Indeed, if $Z^+\not\subset \ex(f)$ then
$f_*(Z^+)\subset Y$ is an lc center which has nonempty intersection
with $\ex(f^{-1})$. Since $(D_0\cdot R)>0$, we see that
$f_*(D_0)$ contains $\ex(f^{-1})$. 
Denote by $h:X\to X_1$ the divisorial or flip contraction morphism. Recall that $Z$ is also a connected component of  $ D_0\cap Z^+$. Therefore, Proposition \ref{p-connect} implies that if there is another component of $D_0\cap Z^+$ whose image intersects  $h(Z)$, then itself must intersect with $Z$ which is absurd. Hence we conclude that $Z$ is the same as $D_0\cap Z^+$  over a neighborhood of $h(Z)$. 
Thus over the neighborhood of $h(Z)$,
$f_*(Z^+)\cap f_*(D_0)$ is  a nonempty subset of
 $\ex(f^{-1})$. 
By \cite{Ambro03}, $f_*(Z^+)\cap f_*(D_0)$ is  a union of lc centers.
However, $\ex(f^{-1})$ does not contain any lc centers,
a contradiction.

{\it Case 2.} Let $W$ be a contracted lc center such that $W\not\subset D_0$. 
We claim that 
$W^-:=W\cap D_0$ is a (nonempty, irreducible) lc center.
Thus $W^-$ is also contracted and the operations
$Z\mapsto Z^+$ and $W\mapsto W^-$ are inverses.

To see this, let $h:X\to X_1$ denote the contraction.
First apply Proposition \ref{p-connect} to the generic point of the Stein factorization of the morphism $(W,W\cap D_0)\to h(W)$.
It shows that $W\cap D_0$ has a unique irreducible component $W^-$
that dominates $h(W)$. If $W\cap D_0$ has any other irreducible component
$W^*$, this would give a contradiction using Proposition \ref{p-connect} over
the generic point of $h(W^*)$.

Let  $M\subset \D(\Delta^{=1})$ be the union of the interiors of all
cells $v_Z$ where $Z\subset \ex(f)$ is a contracted  lc center. 
 Thus 
$$
\D(\Delta_Y^{=1})=\D(\Delta^{=1})\setminus M.
$$
We show that all the cells in $M$ can be collapsed,
starting with the largest dimensional ones.

By the above considerations, the cells in $M$
naturally come in pairs  
$$
\bigl(\langle v_{D_0}, v_W\rangle, v_W\bigr)
$$
where $W\subset X$ is a contracted lc center
not contained in $D_0$. The other cell $\langle v_{D_0}, v_W\rangle$,
 spanned by $v_{D_0}$ and $v_W$, is the same as  $v_{D_0\cap W}$. 
We call $v_W$ {\it link-type} (since $v_W\subset \lk(v_{D_0})$)
and $\langle v_{D_0}, v_W\rangle$
{\it star type} (since $\langle v_{D_0}, v_W\rangle\subset \cst(v_{D_0})$).

Note further that if an lc center $W$ is contracted, then
so is every  lc center contained in $W$.
Dually, if $v_W\in M$ is a facet of a cell $v_V$ then also
$v_V\subset M$.

Let  $\bigl(\langle v_{D_0}, v_W\rangle, v_W\bigr)$ be a maximal
dimensional pair in $M$. If $v_W$ is a face of a cell
$v_V$ then $v_V$ is also in $M$.
By maximality of dimension, $v_V$ is of star-type,
thus  $v_V=\langle v_{D_0}, v_W\rangle $.
Thus $\bigl(\langle v_{D_0}, v_W\rangle, v_W\bigr)$
is a free pair and it can be collapsed. 

Set $m:=\dim \D(\Delta_Y^{=1})$. Note that these collapses
remove all star-type cells of dimension $m$ and all
link-type cells of dimension $m-1$.

Next we look at an $(m-1)$-dimensional pair
$\bigl(\langle v_{D_0}, v_W\rangle, v_W\bigr)$.
Assume that  $v_W$ is a face of a cell
$v_V$. It can not be of link-type since we have already removed
all link-type cells of dimension $m-1$. Thus 
$v_V$ is of star-type and so 
$v_V=\langle v_{D_0}, v_W\rangle $.
Hence $\bigl(\langle v_{D_0}, v_W\rangle, v_W\bigr)$ is again a free
pair and can be collapsed.

Iterating this 
completes the proof.\qed

\begin{rem} It is interesting to see what happens in Theorem 
\ref{subcomp.collapse.thm}
if $(D_i\cdot R)<0$ for every $D_i\subset \Delta^{=1}$.

Let $Z$ be a contracted lc center that is an irreducible component of 
$\bigcap_{i\in J}D_i$. Then $\ex(f)\subset D_i$ for every $i$,
thus $\ex(f)=Z$. Hence $\D(\Delta_Y^{=1})$
is obtained from $\D(\Delta^{=1})$ by removing the cell $v_Z$.
This is not a homotopy equivalence, even the Euler characteristic changes
by 1.

The  general case when $(D_i\cdot R)\leq 0$ 
for every $D_i\subset \Delta^{=1}$ is more complicated.
\end{rem}

\medskip

In order to use Theorem \ref{subcomp.collapse.thm}
we need to find conditions that ensure the
existence of such a divisor $D_0$ at each step of an MMP.

\begin{lem} \label{numtrov.div.lem}
 Let $(X,\Delta)$ be dlt and  $g:X\to S$ a morphism.
Assume that 
 there is a
numerically $g$-trivial effective divisor $A$
whose support equals $\Delta^{=1}$.
Let  $f \colon X \dasharrow Y$  be a 
  divisorial contraction or  flip
corresponding to a $K_X+\Delta$-negative extremal ray $R$ over $S$.
Then 
\begin{enumerate}
\item either $\ex(f)$ does not contain any lc centers
\item or there is a prime divisor $D_0\subset \Delta^{=1}$ such that  $(D_0\cdot R)>0$.
\end{enumerate}
\end{lem}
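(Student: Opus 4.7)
The plan is to use the decomposition $A = \sum_i a_i D_i$, where $a_i > 0$ and $\{D_i\}$ are the irreducible components of $\Delta^{=1}$. From $A \cdot R = 0$ we obtain $\sum_i a_i (D_i \cdot R) = 0$, and since the $a_i$ are positive, either some $(D_i \cdot R) > 0$ (giving conclusion (2) with $D_0 := D_i$), or every $(D_i \cdot R) \le 0$ and hence every $(D_i \cdot R) = 0$. I would devote the rest of the argument to this residual case, proving (1).

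The central observation is: for any irreducible curve $C$ in the extremal class of $R$ and any component $D_i$, the equality $(D_i \cdot C) = 0$ together with effectivity of $D_i$ forces the dichotomy $C \subset D_i$ or $C \cap D_i = \emptyset$. Suppose for contradiction that $Z \subset \ex(f)$ is an lc center, so $Z$ is a component of $\bigcap_{i \in J} D_i$. Pick a general point $p \in Z$ (not lying on any other irreducible component of $\bigcap_{i \in J} D_i$), and a curve $C \subset \ex(f)$ in the class of $R$ through $p$. Since $p \in C \cap D_i$ for every $i \in J$, the dichotomy forces $C \subset D_i$ for each such $i$, hence $C \subset \bigcap_{i \in J} D_i$, and by genericity of $p$, $C \subset Z$. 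Thus $Z$ is swept by $R$-curves through its general points, and consequently the contraction $h\colon X \to X_1$ of $R$ has positive-dimensional fibers over the generic point of $h(Z)$.

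The contradiction is then extracted by a dimension count on $X_1$. Each $D_i$ is $h$-numerically trivial, so by the relative negativity lemma $D_i = h^*(h_*D_i)$ in the divisorial case; consequently $\bigcap_i D_i = h^{-1}\bigl(\bigcap_i h(D_i)\bigr)$ set-theoretically. Since the MMP preserves dlt, $(X_1, h_*\Delta)$ is dlt, so components of $\bigcap_i h(D_i)$ are lc centers of codimension exactly $|J|$; a direct fibre-dimension computation then shows that any irreducible component of $h^{-1}\bigl(\bigcap_i h(D_i)\bigr)$ lying inside the exceptional divisor $E$ has dimension strictly less than $\dim X - |J|$, contradicting the existence of $Z$. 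In the flipping case $X_1$ may fail to be $\mathbb Q$-factorial and the pullback $h^*(h_*D_i)$ need not literally exist; I would instead invoke Proposition \ref{p-connect} applied to $h$ (connectedness of fibres of $h$ in the non-klt locus), combined with the saturation identity $\Delta^{=1} = h^{-1}(h(\Delta^{=1}))$ (a set-theoretic consequence of $A = h^*(h_*A)$), to run essentially the same dimension argument. The main obstacle is this dimension bookkeeping, particularly the flipping case, where the literal pullback argument must be replaced by the connectedness input.
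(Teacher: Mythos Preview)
Your opening reduction—writing $A=\sum a_i D_i$ and using $A\cdot R=0$ to force every $D_i\cdot R=0$ in the residual case—is exactly what the paper does. The divergence, and the gap, come afterwards.

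The fibre-dimension count does not yield the claimed strict inequality. A component of $h^{-1}\bigl(\bigcap_{i\in J} h(D_i)\bigr)$ lying inside $E$ sits in $h|_E^{-1}\bigl(\bar V\cap h(E)\bigr)$ for some component $\bar V$ of $\bigcap_{i\in J} h(D_i)$. Even granting $\bar V\not\subset h(E)$, so that $\dim(\bar V\cap h(E))\le \dim X-|J|-1$, the fibres of $h|_E$ have dimension at least $\dim X-1-\dim h(E)\ge 1$, and the resulting upper bound on the preimage is $\ge\dim X-|J|$, never strictly less. Concretely: if $\dim X=4$, $|J|=2$, and $E$ is contracted to a curve, then an entire fibre $F_p\subset E$ is a surface and can sit as a component of $D_1\cap D_2$; nothing in a fibre count distinguishes this from what you want to exclude. (Also, ``negativity lemma'' is the wrong name for $D_i=h^*(h_*D_i)$: this is the contraction theorem, and it applies equally in the flipping case since $D_i\cdot R=0$ makes $h_*D_i$ $\mathbb{Q}$-Cartier regardless of whether $X_1$ is $\mathbb{Q}$-factorial, so your stated worry there is misplaced.)

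The paper argues differently, with no dimension bookkeeping. It passes to $Y$, observing that $Z_Y\subset \bigcap_{i\in J} f_*(D_i)$ and $Z_Y\subset \ex(f^{-1})$. By \cite{Ambro03} the former intersection is a union of lc centers of $(Y,\Delta_Y)$; on the other hand $\ex(f^{-1})$ contains no lc center, since discrepancies strictly increased for every divisor centered there. The contradiction is drawn from this directly on $Y$. The input you are missing is precisely this last fact: you use that $(X_1,h_*\Delta)$ is dlt to know the codimension of the lc centers, but never use that none of them lies in the indeterminacy locus of $f^{-1}$, which is what actually closes the argument.
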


Proof. Let $Z$ be a contracted lc center
that is an irreducible component of $\cap_{i\in J}D_i$.
We are done if $(D_i\cdot R)>0$ or some $i$.

Otherwise $(D_i\cdot R)\leq 0$ for every $D_i\subset \Delta^{=1}$
and so  $(A\cdot R)\leq 0$ with equality holding only if 
 $(D_i\cdot R)= 0$ for every $D_i$.
In particular $(D_i\cdot R)=0$ for every $i\in J$.

If $f$  contracts a divisor $E_f$ then $(E_f\cdot R)<0$.
Thus $E_f$ is not one of the $D_i$. Set
$Z_Y:=f(Z)$. If $f=(h^+)^{-1}\circ h$ is a
flip, set $Z_Y:= (h^+)^{-1}\bigl(h(Z)\bigr)$.
In both cases, as $D_i\cdot R=0$, we know that $Z_Y\subset \cap_{i\in J}f_*(D_i)$ and
$Z_Y\subset \ex(f^{-1})$.
Thus $\cap_{i\in J}f_*(D_i)$ is nonempty and it is not the $f_*$-image of a
non-contracted lc center. However, $\cap_{i\in J}f_*(D_i)$ is
a union of lc centers by \cite{Ambro03}, a contradiction.
\qed
\medskip

Using Theorem \ref{subcomp.collapse.thm} and Lemma \ref{numtrov.div.lem}
 for every step of an MMP
gives the following.

\begin{cor}\label{some.mmp.collapses.cor}
 Let $(X,\Delta)$ be dlt and
 $g:X\to Z$ a morphism.
 Let  $f \colon X \dasharrow Y$  be a birational map obtained
by running an $(X,\Delta)$-MMP over $Z$. 
Set $\Delta_Y := f_*\Delta$. 
Assume that  
there is a
numerically $g$-trivial effective divisor
whose support equals $\Delta^{=1}$.
Then  $\D(\Delta^{=1})$ collapses to
 $\D(\Delta_Y^{=1})$. \qed
\end{cor}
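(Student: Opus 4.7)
The plan is to run the argument step-by-step along the MMP, decomposing $f\colon X\dasharrow Y$ as a finite sequence
$$
X=X_0 \stackrel{f_0}{\dasharrow} X_1 \stackrel{f_1}{\dasharrow}\cdots \stackrel{f_{n-1}}{\dasharrow} X_n=Y,
$$
where each $f_i$ is either a divisorial contraction or a flip associated to some $(K_{X_i}+\Delta_i)$-negative extremal ray $R_i$ of $\overline{NE}(X_i/Z)$, with $\Delta_i:=(f_{i-1}\circ\cdots\circ f_0)_*\Delta$. By \cite[3.44]{KM98} each $(X_i,\Delta_i)$ is again dlt. It therefore suffices to prove that at every individual step $\D(\Delta_i^{=1})$ collapses to $\D(\Delta_{i+1}^{=1})$: composing these elementary collapses yields the desired collapse of $\D(\Delta^{=1})$ onto $\D(\Delta_Y^{=1})$.

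The first key observation is that the hypothesis of Lemma \ref{numtrov.div.lem} propagates. Let $A$ be the given numerically $g$-trivial effective divisor with $\supp A=\Delta^{=1}$, and set $A_i:=(f_{i-1}\circ\cdots\circ f_0)_*A$. Since each intermediate map is either a divisorial contraction or a flip (an isomorphism in codimension one), the support of $A_i$ is precisely $\Delta_i^{=1}$: the only divisors that can disappear are those contracted by a divisorial step, and those correspond to divisors in $\Delta^{=1}$ whose images become zero. Numerical $g_i$-triviality of $A_i$ is preserved by the standard fact that pushforward along an MMP step takes numerically trivial divisors to numerically trivial divisors over the base (for divisorial contractions by the projection formula; for flips, because the flipping and flipped loci have codimension $\geq 2$ and one can compare via a common resolution).

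With this in hand, I invoke Lemma \ref{numtrov.div.lem} at each step. It gives two alternatives. In the second alternative, there is a prime divisor $D_0\subset \Delta_i^{=1}$ with $(D_0\cdot R_i)>0$, and Theorem \ref{subcomp.collapse.thm} delivers a genuine collapse $\D(\Delta_i^{=1})\searrow \D(\Delta_{i+1}^{=1})$. In the first alternative, $\ex(f_i)$ contains no lc center; since by Lemma \ref{subcomp.lem} the subcomplex $\D(\Delta_{i+1}^{=1})\subset \D(\Delta_i^{=1})$ is obtained by removing exactly the cells corresponding to contracted lc centers, in this case no cell is removed, so $\D(\Delta_{i+1}^{=1})=\D(\Delta_i^{=1})$ and the step is a trivial collapse.

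Concatenating the (possibly trivial) elementary collapses from each of the finitely many MMP steps produces a collapse of $\D(\Delta^{=1})$ onto $\D(\Delta_Y^{=1})$, which is the content of the corollary. The main point requiring care is the propagation of the hypothesis across flips, i.e., verifying that $A_i$ remains numerically $g_i$-trivial and has support $\Delta_i^{=1}$; once this is clear, the corollary is a formal iteration of Theorem \ref{subcomp.collapse.thm} and Lemma \ref{numtrov.div.lem}, so there is no genuine obstacle beyond bookkeeping.
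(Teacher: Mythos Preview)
Your proof is correct and follows the same approach the paper intends: iterate Theorem~\ref{subcomp.collapse.thm} and Lemma~\ref{numtrov.div.lem} over the steps of the MMP, using Lemma~\ref{subcomp.lem} to handle the case where no lc center is contracted. The paper records this in one sentence and leaves the propagation of the hypothesis implicit; your explicit verification that $A_i$ remains numerically $g_i$-trivial with support $\Delta_i^{=1}$ (which ultimately rests on $A_i\cdot R_i=0$, so $A_i$ descends through the contraction or across the flip) fills in exactly the bookkeeping the paper omits.
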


The next lemma gives other examples where the assumptions of
Theorem \ref{subcomp.collapse.thm}
are satisfied.

\begin{lem} \label{isolated.Qf.mmp.lem}
Let $X, Y$ be  normal, $\mathbb Q$-factorial
varieties and  $p:Y\to X$  a projective, birational morphism.
Let $\Gamma$ be a boundary on $Y$ such that 
$(Y, \Gamma)$ is lc.
Let  $f \colon Y \dasharrow Y_1$  be a 
  divisorial contraction or  flip
corresponding to a $(K_Y+\Gamma)$-negative extremal ray $R$ over $X$.
Then 
\begin{enumerate}\setcounter{enumi}{2}
\item either there is an $E_R\subset \ex(p)$ such that  $(E_R\cdot R)>0$
\item or $f$  contracts a divisor $E_f\subset \ex(p)$
and $Y_1\to X$ is a local isomorphism at the generic point of
$f(E_f)$.
\end{enumerate}
\end{lem}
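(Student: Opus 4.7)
My plan is to assume (3) fails, i.e., $(E_i\cdot R)\leq 0$ for every $p$-exceptional prime divisor $E_i$, and deduce (4). The first step is to locate a specific $p$-exceptional divisor $E_{i_0}$ with strictly negative intersection against $R$. Since $X$ is $\mathbb{Q}$-factorial, the $p$-exceptional prime divisors span the relative N\'eron--Severi space $N^1(Y/X)$. Picking a $p$-ample divisor $H$ on $Y$, I would write numerically $H\equiv p^*A+\sum_i b_i E_i$. Because $R$ lies over $X$, $(p^*A\cdot R)=0$, so $(H\cdot R)=\sum_i b_i(E_i\cdot R)$, which is positive by $p$-ampleness; combined with the hypothesis $(E_i\cdot R)\leq 0$ this forces some $b_{i_0}<0$ with $(E_{i_0}\cdot R)<0$. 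In particular, every curve in $R$ lies in $E_{i_0}$.

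The next step, and the one I expect to be the most delicate, is to rule out the flip case, i.e., to show $f$ is necessarily a divisorial contraction. If $f$ were a flip, the contraction $h\colon Y\to Z$ of $R$ would be small, and $\ex(h)\subset E_{i_0}$ would have codimension $\geq 2$; then $h|_{E_{i_0}}$ is generically finite onto the divisor $D_0:=h(E_{i_0})\subset Z$, which is $q$-exceptional for the induced $q\colon Z\to X$. Since $(E_{i_0}\cdot R)\neq 0$, $D_0$ cannot be $\mathbb{Q}$-Cartier on $Z$; but a Negativity-type argument applied to $q$, exploiting the $\mathbb{Q}$-factoriality of $X$, should produce a contradiction. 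Hence $f$ is divisorial, and if the contracted divisor $E_f$ were not $p$-exceptional, then $p(E_f)=p_1(f(E_f))$ would be a divisor in $X$ even though $f(E_f)$ has codimension $\geq 2$ in $Y_1$---impossible for a proper birational morphism. So $E_f\subset\ex(p)$.

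It remains to show that $p_1\colon Y_1\to X$ is a local isomorphism at the generic point $\eta$ of $f(E_f)$. I would take a general curve $C$ in a general fiber of $f|_{E_f}$; then $[C]\in R$, and for every $i\neq f$ the curve $C$ is not contained in the codimension-$2$ locus $E_f\cap E_i$, so $(C\cdot E_i)\geq 0$. Combined with $(E_i\cdot R)\leq 0$, this forces $(C\cdot E_i)=0$, hence $C\cap E_i=\emptyset$ set-theoretically. Passing to the generic fiber, $f^{-1}(\eta)\cap E_i=\emptyset$ for every $i\neq f$, so $\eta$ avoids each $p_1$-exceptional divisor $f_*(E_i)$. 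Since $X$ is $\mathbb{Q}$-factorial, $\ex(p_1)$ is pure of codimension $1$, and therefore $p_1$ is an isomorphism in a neighborhood of $\eta$. The hardest ingredient is the Negativity argument ruling out the flip case in the second step; the other pieces reduce to a careful analysis of how general $R$-curves meet the $p$-exceptional divisors.
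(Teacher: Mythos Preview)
Your framework is sound and your treatment of the divisorial case is correct: the argument producing a $p$-exceptional $E_{i_0}$ with $(E_{i_0}\cdot R)<0$ from the fact that the $E_i$ span $N^1(Y/X)$, the verification that $E_f\subset\ex(p)$, and the final intersection-number argument showing that $p_1$ is a local isomorphism at the generic point of $f(E_f)$ are all fine.

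The gap is exactly where you flagged it---the flip case. Your observation that $D_0=h_*E_{i_0}$ fails to be $\mathbb{Q}$-Cartier on $Z$ is correct but does not lead anywhere: flipping contractions \emph{always} produce non-$\mathbb{Q}$-factorial targets, and the $\mathbb{Q}$-factoriality of $X$ imposes no constraint on which Weil divisors of the intermediate variety $Z$ are $\mathbb{Q}$-Cartier. The Negativity Lemma requires its input divisor to be $\mathbb{Q}$-Cartier, which $D_0$ is not, and I do not see any variant applied to $q\colon Z\to X$ that closes the argument.

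The paper avoids $Z$ entirely and works instead on $Y_1$. Since both $Y_1$ and $X$ are $\mathbb{Q}$-factorial, $\ex(p_1)$ is pure of codimension one and, by \cite[2.62]{KM98}, supports an effective $p_1$-anti-ample divisor. Any flipped curve $C^+\subset\ex(h^+)\subset\ex(p_1)$ therefore meets some irreducible $p_1$-exceptional component $E_1$ with $(E_1\cdot C^+)<0$, and its strict transform $E_R:=f^{-1}_*E_1$ on $Y$ then satisfies $(E_R\cdot R)>0$. Thus in the flip case (3) always holds. You can graft this onto your contrapositive setup if you like: under your hypothesis $(E_i\cdot R)\le 0$ for all $i$, writing $E_i\equiv_Z\lambda_i(K_Y+\Gamma)$ with $\lambda_i\ge 0$ and passing to strict transforms gives $(f_*E_i\cdot C^+)\ge 0$ for every $p_1$-exceptional divisor, contradicting the existence of the anti-ample divisor on $Y_1$. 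Either way, the essential move is to pass to $Y_1$, not to $Z$.
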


Proof. Let  $p_1:Y_1\to X$ be the natural morphism.
Since $X$ is $\mathbb Q$-factorial,
$\ex(p_1)$ has pure codimension 1.

Assume first that $f$ is a divisorial contraction of
a divisor $E_f$. We are done if (4) holds. Otherwise
 $f(E_f)\subset \ex(p_1)$ hence there is an irreducible divisor
$E_{1}\subset \ex(p_1)$ that contains $f(E_f)$.
Hence its birational transform $E_R:=f^{-1}_*E_{1}$
has positive intersection with $R$.

Next assume that $f=(h^+)^{-1}\circ h$ is a flip.
Then $\ex(h^+)\subset \ex(p_1)$.
Since $X$ is $\mathbb Q$-factorial, there is an
effective anti-ample divisor supported on $\ex(p_1)$ \cite[2.62]{KM98}.
So there is  an irreducible divisor
$E_{1}\subset \ex(p_1)$ that has negative intersection with a flipped curve.
Thus its birational transform $E_R:=f^{-1}_*E_{1}$
has positive intersection with $R$. \qed
\medskip

As a consequence, we get the following
generalization of (\ref{DMR-thm-isolated}.3).

\begin{cor}  \label{isolated.Qf.mmp.cor}
Let $(X, \Delta)$ be a $\mathbb Q$-factorial pair and
 $(0\in X)$ a point such that 
$\bigl(X\setminus\{0\}, \Delta|_{X\setminus\{0\}}\bigr)$ is klt.
 Let  $p:Y\to X$ be a projective, birational morphism
that is an isomorphism over $X\setminus\{0\}$.
Set $E:=\ex (p)$ and assume that $(Y, E+p^{-1}_*\Delta)$ is  
 dlt. Then 
\begin{enumerate}
\item $\D(E)$  collapses to $\DMR(0\in X)$ and
\item if $(X, \Delta)$ is klt then $\D(E)$ is collapsible.
\end{enumerate}
\end{cor}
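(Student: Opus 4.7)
Set $\Gamma := E + p^{-1}_*\Delta$. Since $\bigl(X\setminus\{0\},\Delta|_{X\setminus\{0\}}\bigr)$ is klt and $p$ is an isomorphism over $X\setminus\{0\}$, every coefficient of $\Delta$ is strictly less than $1$, so $\Gamma^{=1}=E$, and $(Y,\Gamma)$ is dlt by hypothesis. The plan is to run a $(K_Y+\Gamma)$-MMP over $X$ and iterate Theorem~\ref{subcomp.collapse.thm}. By \cite{OX12}, such an MMP terminates at a dlt modification $p'\colon(Y',\Gamma')\to(X,\Delta)$, and by Definition~\ref{DMR.defn}, $\D\bigl((\Gamma')^{=1}\bigr)=\DMR(0\in X)$.

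At each step $f\colon(Y_i,\Gamma_i)\dasharrow(Y_{i+1},\Gamma_{i+1})$ for an extremal ray $R$, Lemma~\ref{isolated.Qf.mmp.lem} yields two cases. In case (3), there is a prime divisor $E_R\subset\ex(p_i)$ with $(E_R\cdot R)>0$; because $p_i$ is an isomorphism away from $0$, every $p_i$-exceptional divisor is the birational transform of a component of $E$ and thus lies in $\Gamma_i^{=1}$. So $E_R\subset\Gamma_i^{=1}$, and Theorem~\ref{subcomp.collapse.thm} produces a collapse of $\D(\Gamma_i^{=1})$ onto $\D(\Gamma_{i+1}^{=1})$. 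In case (4), $f$ contracts a divisor $E_f$ such that $p_{i+1}$ is a local isomorphism at the generic point $\eta$ of $f(E_f)$; since $f(E_f)\subset p_{i+1}^{-1}(0)$ and the fibre $p_{i+1}^{-1}(0)$ is connected (by Zariski's Main Theorem, as $Y_{i+1}$ and $X$ are normal), $f(E_f)=\{\eta\}$ exhausts the fibre, so $p_{i+1}$ is an isomorphism over a neighbourhood of $0$ and $\Gamma_{i+1}^{=1}=\emptyset$. Hence case (4) can only occur as the terminal step of the MMP, and only when the dlt modification is trivial, i.e.\ when $(X,\Delta)$ is klt.

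For Part~(1) in the non-klt case, every MMP step is of type (3), and iterating Theorem~\ref{subcomp.collapse.thm} shows $\D(E)$ collapses onto $\D\bigl((\Gamma')^{=1}\bigr)=\DMR(0\in X)$. In the klt case, Part~(1) is subsumed by Part~(2), since $\DMR(0\in X)$ is then a point by convention. For Part~(2), when $(X,\Delta)$ is klt, the terminal step is of type (4) and contracts a single prime divisor $E_f$; hence at the penultimate stage $\Gamma_n^{=1}=E_f$ (each MMP step contracts at most one divisor of $\Gamma^{=1}$), and $\D(\Gamma_n^{=1})$ is a single vertex. The preceding type-(3) steps collapse $\D(E)$ onto this vertex, establishing collapsibility.

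The main obstacle is the treatment of the final MMP step in case (4): Theorem~\ref{subcomp.collapse.thm} does not apply when the contracted divisor is the only remaining component of $\Gamma^{=1}$, since then no divisor in $\Gamma^{=1}$ has positive intersection with $R$. The connectedness of the fibres of $p_{i+1}$ is what pins down case (4) to the terminal step in the klt setting and lets us stop the MMP one step earlier, collapsing onto a single vertex rather than attempting to collapse onto the empty complex.
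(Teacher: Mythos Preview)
Your approach is the same as the paper's: run the $(Y,E+p^{-1}_*\Delta)$-MMP over $X$ and feed each step through Lemma~\ref{isolated.Qf.mmp.lem} and Theorem~\ref{subcomp.collapse.thm}. Your analysis of alternative~(4) via connectedness of the fibre $p_{i+1}^{-1}(0)$ is in fact more explicit than the paper, which simply declares that in the klt case the final contraction takes a single divisor to a point.

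There is one genuine omission. Lemma~\ref{isolated.Qf.mmp.lem} requires $Y$ (and hence every $Y_i$) to be $\mathbb Q$-factorial, and the standard MMP machinery you invoke does as well; the corollary does not assume this of $Y$. The paper fixes this in its first sentence by passing to a small $\mathbb Q$-factorial modification of $Y$ as in Definition~\ref{Qf.defn}, which leaves $\D(E)$ unchanged since $(Y,\Gamma)$ is dlt. Without this reduction your appeal to Lemma~\ref{isolated.Qf.mmp.lem} is not justified. Once you insert that line, the argument is complete.
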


Proof. As we note in Definition \ref{Qf.defn}, we may assume that
$Y$ is $\mathbb Q$-factorial.
We run the $(Y,E+p^{-1}_*\Delta)$-MMP over $X$ to get
$$
(Y,E)=(Y_0,E_0)\stackrel{f_0}{\map} (Y_1,E_1)\stackrel{f_1}{\map}\cdots 
\stackrel{f_{r-1}}{\map} (Y_r,E_r)\stackrel{q}{\to} X.
$$
If $(X, \Delta)$ is not klt then  $(Y_r,E_r) $ is a dlt modification.
If $(X, \Delta)$ is klt then we let  $q:(Y_r,E_r)\to X $
denote the last extremal contraction.

By Lemma \ref{isolated.Qf.mmp.lem}, we can use Theorem
\ref{subcomp.collapse.thm} to conclude that
$\D(E_i)$ collapses to $\D(E_{i+1})$ for $i<r$,
proving the first claim.

 If $(X, \Delta)$ is klt then $q$ 
 contracts a single divisor $E_r$ to
a point. Thus $\D(E_r)$ is a point which shows (2). \qed
\medskip

During the proof of Theorem \ref{subcomp.collapse.thm}
 we have used  the following
variant of the  connectedness theorem. It is a special case of \cite[6.6]{Ambro03}. We include a proof here for reader's convenience.

\begin{prop}\label{p-connect}
Let $(X,\Delta)$ be a dlt pair and $f\colon X \to Y$  a projective morphism such that $f_*\mathcal{O}_X=\mathcal{O}_Y$ and $-(K_X+\Delta)$ is $f$-ample. Then for any $y\in Y$ there is at most one minimal lc center $Z$ of $(X,\Delta)$ that intersects $f^{-1}(y)$.  
\end{prop}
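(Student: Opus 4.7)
The plan is to argue by contradiction, combining the classical Koll\'ar--Shokurov connectedness theorem with dlt adjunction and induction on $\dim X$. Suppose $Z_1 \ne Z_2$ are two distinct minimal lc centers of $(X,\Delta)$ that both meet $f^{-1}(y)$.

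First I would show $Z_1 \cap Z_2 = \emptyset$. In a dlt pair, the intersection of any two lc centers is empty or a union of lc centers (the fact invoked in item (5) of Definition \ref{dual.cpx.defn}; see \cite[Sec.3.9]{Fujino} and \cite[Thm.4.16]{kk-singbook}). If $Z_1\cap Z_2$ were nonempty, it would contain an lc center strictly smaller than both $Z_1$ and $Z_2$, contradicting minimality.

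Next I would apply Koll\'ar--Shokurov connectedness: since $-(K_X+\Delta)$ is $f$-ample (hence $f$-nef and $f$-big) and $f_*\mathcal{O}_X=\mathcal{O}_Y$, the non-klt locus $\Supp(\Delta^{=1}) \cap f^{-1}(y)$ is connected. So one can find a chain of irreducible components $D^{(0)},D^{(1)},\dots,D^{(k)}$ of $\Delta^{=1}$ with $Z_1 \subseteq D^{(0)}$, $Z_2 \subseteq D^{(k)}$, and $D^{(i)} \cap D^{(i+1)} \cap f^{-1}(y) \ne \emptyset$ for each $i$.

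Then I would induct on $\dim X$. The base case $\dim X=1$ is immediate: on a smooth curve with $-(K_X+\Delta)$ ample relative to $f$, $\deg\Delta^{=1}<\deg(-K_X)+ \deg\Delta^{<1}$ forces $\Delta^{=1}$ to consist of at most one point in any given fiber. For the inductive step, take $D:=D^{(0)}$. By dlt adjunction, $(D,\Delta_D)$ is a dlt pair with $-(K_D+\Delta_D) \sim_{\mathbb{Q},f|_D} -(K_X+\Delta)|_D$ being $(f|_D)$-ample, and $Z_1$ remains a minimal lc center of $(D,\Delta_D)$. Passing to the Stein factorization of $f|_D$ and applying the inductive hypothesis, every lc center of $(D,\Delta_D)$ that meets the same Stein fiber over $y$ as $Z_1$ must contain $Z_1$; in particular $D\cap D^{(1)}$ contains $Z_1$, so $Z_1\subseteq D^{(1)}$. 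Iterating this argument along the chain yields $Z_1\subseteq D^{(k)}$. Now $(D^{(k)},\Delta_{D^{(k)}})$ is a dlt pair with $-(K_{D^{(k)}}+\Delta_{D^{(k)}})$ relatively ample, containing both $Z_1$ and $Z_2$ as minimal lc centers meeting a common Stein fiber over $y$, contradicting the inductive hypothesis.

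The main technical obstacle is the careful handling of Stein factorization at each step of the chain: one has to verify that $Z_1$ and the successive intersections $D^{(i)}\cap D^{(i+1)}$ lie over a common Stein fiber over $y$, which requires repeatedly using connectedness of $f^{-1}(y)$ inside $X$ together with connectedness of the non-klt locus in that fiber. The rest of the argument is bookkeeping on log canonical structures through adjunction.
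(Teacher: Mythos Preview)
Your proposal is correct and follows essentially the same route as the paper's proof: both use the Koll\'ar--Shokurov connectedness theorem to produce a chain of components of $\Delta^{=1}$ linking $Z_1$ to $Z_2$, then apply dlt adjunction and induction on dimension to walk $Z_1$ along the chain into $W_k$ (your $D^{(k)}$), and finish by one more application of the inductive hypothesis. Your explicit attention to Stein factorization and the disjointness of $Z_1,Z_2$ are details the paper leaves implicit, but the architecture is identical.
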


\begin{proof} 
This  follows from  \cite[Theorem 10]{Kollar11}
 by adding a general ample $\mathbb{Q}$-divisor $H\sim _{\mathbb{Q},f} -(K_X+\Delta)$ with small coefficients.

A more direct proof is the following.
By the  connectedness theorem \cite[Thm.5.48]{KM98},
 we know that if $Z_1$ and $Z_2$ are two lc centers that are minimal among all lc centers intersecting $f^{-1}(y)$, then we can find a chain $W_1,\dots,W_k$ of divisors in $\Delta^{=1}$ such that $Z_1\subset W_1$, $Z_2\subset W_k$, and $W_i \cap W_{i+1} \cap f^{-1}(y) \ne \emptyset$ for all $1 \le i \le k-1$.
Applying induction to $f\colon (W_1,{\rm Diff}_{W_1}\Delta)\to f(W_1)$, we conclude that $Z_1\subset W_1\cap W_2$, which implies that $Z_1\subset W_2$. Repeating the above argument for $f\colon (W_i,{\rm Diff}_{W_i}\Delta)\to f(W_i)$, we eventually obtain that $Z_1\subset W_k $. Then again by induction, we conclude that $Z_1=Z_2$.  
\end{proof}

\section{Proofs  of the main theorems}\label{pf.sect}

We prove Theorem \ref{t-sdr} and its consequences
 in this section. The strategy is to run a suitable MMP 
 terminating with a 
 dlt modification of $(X,\Delta)$ and  show that each step of the program 
corresponds to a collapse of the dual complex.
First we need to deal with divisors that are not $\mathbb{Q}$-Cartier.

\begin{defn}\label{Qf.defn} Let $X$ be a normal variety.
A {\it small, $\mathbb{Q}$-factorial modification}
is a proper, birational morphism $\pi:X^{\rm qf}\to X$ such that
$\pi$ is small, that is, there are no exceptional divisors,
and $X^{\rm qf}$ is $\mathbb{Q}$-factorial.

For a divisor $\Delta$ on $X$ set $\Delta^{\rm qf}:=\pi^{-1}_*\Delta$.
Note that if $(X, \Delta)$ is dlt  then
so is $\bigl(X^{\rm qf}, \Delta^{\rm qf}\bigr) $ and
$\D\bigl(\Delta^{=1}\bigr)=\D\bigl((\Delta^{\rm qf})^{=1}\bigr)$.

A dlt pair $(X, \Delta)$  always has projective,
small, $\mathbb{Q}$-factorial modifications
but not all lc pairs have them; see \cite[1.37]{kk-singbook}.

Let $\pi_1:\bigl(X_1, \Delta_1\bigr)\to (X,\Delta)$ be a projective,
small, $\mathbb{Q}$-factorial modification of a 
dlt pair $(X, \Delta)$.
Next run an $\bigl(X_1, \Delta_1-\Delta_1^{=1}\bigr)$-MMP over $X$
to get  $\pi_2:\bigl(X_2, \Delta_2\bigr)\to (X,\Delta)$
such that 
$$
-\Delta_2^{=1}\sim_{{\mathbf Q},\pi_2} 
K_{X_2}+\Delta_2-\Delta_2^{=1}
$$
 is $\pi_2$-nef. Thus
 $\pi_2:\bigl(X_2, \Delta_2\bigr)\to (X,\Delta)$
is a projective,
small, dlt, $\mathbb{Q}$-factorial modification such that 
$\supp \Delta_2^{=1}=\pi_2^{-1}(\supp\Delta^{=1})$.
\end{defn}

\begin{say}[Proof of Theorem~\ref{t-sdr}] \label{pf.of.t-sdr}
Let $\pi_2:\bigl(X_2, \Delta_2\bigr)\to (X,\Delta)$ be as above.

After a suitable sequence of blow-ups we get $Y_2\to Y$ such that
the induced rational map $g_2:Y_2\map X_2$ is a morphism. 
As noted in Paragraph \ref{bu.and.baryc.say},
 $\D\bigl(\supp g_2^{-1}(\Delta_2^{=1})\bigr)$
is simple-homotopy equivalent to 
$\D(E)=\D\bigl(\supp g^{-1}(\Delta^{=1})\bigr)$.
By replacing $X$ by $X_2$ and $Y$ by $Y_2$,
 we may assume from now on that $X$ is $\mathbb{Q}$-factorial.

Consider the boundary
$$
\Gamma := g^{-1}_*\Delta^{<1} + E + \textstyle{\sum} b_i F_i
$$
where the sum on the right hand side is taken over all 
 $g$-exceptional divisors $F_i$ that are not contained in the support of $E$
 and $b_i := \max\{\frac{1 - a(F_i,X,\Delta)}{2},\frac{1}{2}\}$. 
We run a $(K_Y,\Gamma)$-MMP  over $X$ as in  \cite{BCHM10}.

By construction, the $\mathbb Q$-divisor
$$
G := K_Y+ \Gamma - g^*(K_X+\Delta)
$$
is effective and its support consists precisely of all the $g$-exceptional 
divisors with positive log discrepancy with respect to $(X,\Delta)$. As the push forward of $G$ to $\tilde{X}$ is effective, exceptional and semiample over $X$, it must be trivial by negativity. This means that the map $Y \dasharrow \tilde X$ produced by the MMP contracts all components of $G$. It follows therefore that if we denote by $\tilde{\Delta}$ the birational transform of $\Gamma$ on $\tilde X$, then $(\tilde X, \tilde \Delta)$ is a
 dlt modification of $(X,\Delta)$.
Thus  $\D(\Delta^{=1})$ is PL homeomorphic to $\D(\tilde\Delta^{=1})$ by 
Proposition \ref{crep.bir.dlt.models}. 

Furthermore, since $E=\supp g^*(\Delta^{=1})$
and $g^*(\Delta^{=1}) $ is numerically $g$-trivial, 
Corollary \ref{some.mmp.collapses.cor} implies that 
$\D(E)$ collapses to $\D(\tilde\Delta^{=1})$.\qed
\end{say}

We can now state and prove the general form of Theorem \ref{DMR-thm-isolated}

\begin{thm}\label{DMR-thm}
Let $X$ be a normal variety and $\Delta$ an effective boundary on $X$
such that $K_X+\Delta$ is $\mathbb Q$-Cartier.
Let $W\neq \emptyset$ be the non-klt locus of $(X,\Delta)$.
Let $p:Y\to X$ be a proper, birational morphism and 
 $E$ the divisorial part of $\supp p^{-1}(W)$. Assume that 
  $\bigl(Y,E+p^{-1}_*\Delta^{<1}\bigr)$ is dlt.
\begin{enumerate}
\item If $K_Y+E+p^{-1}_*\Delta^{<1}$ is $p$-nef then
the dual complex $\DMR(X,\Delta ):=\D(E)$ is independent of $p$, up-to
PL homeomorphism.
\item If  $E=\supp p^{-1}(W)$ then
 the dual complex $\D(E)$ is
 simple-homotopy equivalent to
$\DMR( X,\Delta)$.
\item If  $X$ is $\mathbb Q$-factorial 
and $p$ is an isomorphism over $X\setminus W$   
then  $\D(E)$ collapses to $\DMR(X,\Delta)$.
\end{enumerate}
\end{thm}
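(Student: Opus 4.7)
The theorem has three parts; I plan to prove them in turn, relying on Proposition \ref{crep.bir.dlt.models} for (1), a double application of Theorem \ref{t-sdr}(1) for (2), and an MMP-based collapse argument in the spirit of Corollary \ref{isolated.Qf.mmp.cor} for (3). Part (1) is almost immediate: any two dlt modifications of $(X,\Delta)$ are crepant-birational over $X$ by \cite[3.52]{KM98} (as noted after Definition \ref{DMR.defn}), and a dlt modification extracts precisely the divisors of discrepancy $-1$, whose centers lie in $W$, so the coefficient-$1$ part of its boundary coincides with $E$ in the notation of the theorem. Proposition \ref{crep.bir.dlt.models} then produces the PL homeomorphism between the $\D(E)$'s of any two dlt modifications, proving (1).

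For (2), fix a dlt modification $q\colon(X^{\rm dlt},\Delta^{\rm dlt})\to(X,\Delta)$ and choose a smooth projective $Z$ with projective birational morphisms $h\colon Z\to Y$ and $h'\colon Z\to X^{\rm dlt}$ satisfying $p\circ h=q\circ h'$, arranged so that the reduced preimage of $W$ in $Z$ is simple normal crossing. The hypothesis $E=\supp p^{-1}(W)$ gives $\supp h^{-1}(E)=(p\circ h)^{-1}(W)$ set-theoretically, while $\supp(\Delta^{\rm dlt})^{=1}=q^{-1}(W)$ always holds for a dlt modification, since the non-klt locus is preserved under crepant-birational equivalence and coincides with the support of the coefficient-$1$ divisor in the dlt case. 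Hence both divisorial preimages on $Z$ agree with the set of prime divisors of $Z$ whose image in $X$ lies in $W$. Applying Theorem \ref{t-sdr}(1) first to the dlt pair $(Y,E+p^{-1}_*\Delta^{<1})$ via $h$, and then to $(X^{\rm dlt},\Delta^{\rm dlt})$ via $h'$, produces two simple-homotopy equivalences that chain to the desired equivalence between $\D(E)$ and $\DMR(X,\Delta)$.

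For (3), I would mimic Corollary \ref{isolated.Qf.mmp.cor}. Since $X$ is $\mathbb{Q}$-factorial, Definition \ref{Qf.defn} lets me replace $Y$ with a small $\mathbb{Q}$-factorial modification without altering $\D(E)$, so I may assume $Y$ itself is $\mathbb{Q}$-factorial. Then I run the $(Y,E+p^{-1}_*\Delta^{<1})$-MMP over $X$; by \cite{BCHM10} it terminates at a dlt modification of $(X,\Delta)$, hence at a variety whose dual complex is $\DMR(X,\Delta)$ by (1). Because $p$ is an isomorphism over $X\setminus W$, every exceptional divisor of each intermediate morphism $Y_i\to X$ is contained in $E_i$, so Lemma \ref{isolated.Qf.mmp.lem} furnishes, at each step, a divisor $E_R\subset E_i$ with $(E_R\cdot R_i)>0$; Theorem \ref{subcomp.collapse.thm} then promotes each MMP step to an elementary collapse of $\D(E_i)$ onto $\D(E_{i+1})$, and composing all such collapses yields the collapse from $\D(E)$ to $\DMR(X,\Delta)$. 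The main technical obstacle here is the degenerate case (4) of Lemma \ref{isolated.Qf.mmp.lem}, in which a divisorial contraction makes $Y_{i+1}\to X$ a local isomorphism at the generic point of $f_i(E_f)$ with no positively-intersecting $E_R$ available; Theorem \ref{subcomp.collapse.thm} does not directly apply, and one must argue by hand that the step remains a collapse, using that the local-isomorphism condition forces $E_f$ to be the unique component of $E_i$ dominating $f_i(E_f)$ so that the cells incident to $v_{E_f}$ pair up into free pairs.
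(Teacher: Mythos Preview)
Your argument for (1) matches the paper's.

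For (2) there is a genuine gap. You assert that $\supp(\Delta^{\rm dlt})^{=1}=q^{-1}(W)$ holds for \emph{every} dlt modification $q$, but this is false: one always has $q\bigl(\supp(\Delta^{\rm dlt})^{=1}\bigr)=W$, yet $q^{-1}(W)$ can be strictly larger, since $\ex(q)$ need not have pure codimension~1 (cf.\ Definition~\ref{dlt-mod.defn}). Concretely, if $q^{-1}(W)$ has an irreducible component $C$ of codimension $\geq 2$ not lying in $\supp(\Delta^{\rm dlt})^{=1}$, then your common resolution $Z$ will carry an $h'$-exceptional divisor over $C$; that divisor lies in the preimage of $W$ but not in $\supp (h')^{-1}\bigl((\Delta^{\rm dlt})^{=1}\bigr)$, so the two ``divisorial preimages'' on $Z$ do not agree and the chain of simple-homotopy equivalences breaks. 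The paper closes this gap with Lemma~\ref{3.3.1.4.HMX12.lem}, which produces a \emph{particular} dlt modification for which $\supp q^{-1}(W)$ is purely divisorial and hence equals $\supp(\Delta^{\rm dlt})^{=1}$. Once you invoke that lemma, your double application of Theorem~\ref{t-sdr} goes through and is essentially equivalent to the paper's route (the paper uses the blow-up description of Paragraph~\ref{bu.and.baryc.say} on the $Y$ side instead of a second appeal to Theorem~\ref{t-sdr}, but this is a cosmetic difference).

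For (3) you and the paper diverge on how to treat alternative (4) of Lemma~\ref{isolated.Qf.mmp.lem}. The paper observes that the contracted divisor $E_f$ is $p_i$-exceptional, hence $p_{i+1}\bigl(f_i(E_f)\bigr)\subset W$, and concludes directly that $p_{i+1}$ cannot be a local isomorphism at the generic point of $f_i(E_f)$; thus alternative (4) never occurs and Theorem~\ref{subcomp.collapse.thm} applies at every step. You instead accept that (4) might occur and attempt a separate collapse argument, but the claim that ``the cells incident to $v_{E_f}$ pair up into free pairs'' is not justified: knowing that $E_f$ is the unique component of $E_i$ dominating $f_i(E_f)$ does not by itself produce the $D_0$ with $(D_0\cdot R)>0$ that drives the pairing in Theorem~\ref{subcomp.collapse.thm}, nor an evident substitute. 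The cleaner fix is to argue, as the paper does, that case (4) is excluded in this setting.
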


Proof. Part (1) is noted in Definition \ref{DMR.defn}.

In order to see (2) we use Lemma \ref{3.3.1.4.HMX12.lem}
to obtain a $\mathbb Q$-factorial dlt modification
$q: \bigl(X^{\rm dlt}, \Delta^{\rm dlt}\bigr)\to (X,\Delta )$
 such that
$(\Delta^{\rm dlt})^{=1}=\supp q^{-1}(W)$. 
After some blow-ups we get
$\pi:Y_1\to Y$ such that $q^{-1}\circ p\circ \pi:Y_1\to X^{\rm dlt}$
is a morphism and $\D\bigl(\supp (p\circ \pi)^{-1}(W)\bigr)$
is simple-homotopy equivalent to  $\D(E)$.

By Theorem \ref{t-sdr}, $\D\bigl(\supp (p\circ \pi)^{-1}(W)\bigr)$
collapses to $ \D\bigl((\Delta^{\rm dlt})^{=1}\bigr)=\DMR(X,\Delta )$.

Finally assume that $X$ is $\mathbb Q$-factorial 
and $p$ is an isomorphism over $X\setminus W$.
Let $f_i:Y_i\to Y_{i+1}$ be an extremal contraction
of a divisor $E_i\subset Y_i$ in our MMP and $p_{i+1}:Y_{i+1}\to X$ the
projection. Note that $p_{i+1}\bigl(f_i(E_i)\bigr)\subset W$,
thus $p_{i+1}$ is not a local isomorphism at the generic point of
$f_i(E_i) $. Thus the first alternative of Lemma \ref{isolated.Qf.mmp.lem}
applies at each step of the MMP, hence  Theorem \ref{subcomp.collapse.thm}
implies that $\D(E)$ collapses to $\DMR(X,\Delta)$.
\qed
\medskip

The following argument is similar to \cite[3.3.1.4]{HMX12}.

\begin{lem}\label{3.3.1.4.HMX12.lem}
Let $(X,\Delta)$ be a pair such that $K_X+\Delta$ is
$\mathbb{Q}$-Cartier.  Let $W$  be the non-klt locus 
of $(X,\Delta)$. Then there
exists a dlt modification $p:Y\to X$ such that ${\rm Supp} (p^{-1}W)$ is a
divisor.
\end{lem}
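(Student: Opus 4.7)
The plan is to adapt the MMP strategy from the proof of Theorem~\ref{t-sdr}. First I would take a log resolution $g \colon Y \to X$ of $(X,\Delta)$ such that $E := \supp g^{-1}(W)$ is a simple normal crossing divisor. Further, since each irreducible component $W_i$ of $W$ lies in the non-klt locus, by definition there exists a divisorial valuation over $X$ centered at $W_i$ with log discrepancy $\le 0$, and I would refine $g$ so that at least one such valuation $F_i$ is realized as a prime divisor on $Y$ (automatically contained in $E$).

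Next I would define the auxiliary boundary
$$
\Gamma := E + g^{-1}_*\Delta^{<1} + \sum_F b_F F,
$$
where the sum ranges over $g$-exceptional primes $F$ not contained in $E$, and $b_F := \max\{(1-a(F,X,\Delta))/2,\,1/2\} \in [1/2,1)$; here we use that $g(F) \not\subset W$ forces $(X,\Delta)$ to be klt at the generic point of $g(F)$, so $a(F,X,\Delta) > -1$. Then $(Y,\Gamma)$ is an snc, hence dlt, pair. I would then run the $(K_Y+\Gamma)$-MMP over $X$ via \cite{BCHM10}, terminating at $p \colon \tilde Y \to X$ with $K_{\tilde Y}+\tilde\Gamma$ being $p$-nef.

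Setting $G := K_Y + \Gamma - g^*(K_X + \Delta)$, a direct computation gives $G$-coefficient $b_F + a(F,X,\Delta) > 0$ on $g$-exceptional $F \not\subset E$, and $1 + a(F,X,\Delta)$ on $g$-exceptional $F \subset E$, which is positive precisely when $a(F,X,\Delta) > -1$. The pushforward $\tilde G$ is $p$-exceptional and $p$-nef, so the negativity lemma yields $\tilde G \le 0$, and every prime with strictly positive $G$-coefficient must therefore be contracted by the MMP. Consequently the surviving $g$-exceptional divisors are exactly those contained in $E$ with $a(F,X,\Delta) \le -1$, and writing $\tilde E_{\mathrm{exc}}$ for their reduced sum on $\tilde Y$, we obtain $\tilde\Gamma = \tilde E_{\mathrm{exc}} + p^{-1}_*\Delta$, making $p$ a dlt modification of $(X,\Delta)$.

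Finally I would verify that $\supp p^{-1}(W)$ is a divisor. For each component $W_i$, the chosen $F_i$ has $G$-coefficient $1 + a(F_i,X,\Delta) \le 0$, so it is not forced out; once survival is secured, $p(F_i) = W_i$ exhibits a $p$-exceptional divisor over $W_i$, making $\supp p^{-1}(W)$ divisorial. The hard part will be the borderline case $a(F_i,X,\Delta) = -1$ (i.e., $W_i$ an lc center), where the $G$-coefficient is merely zero and survival through the MMP is not automatic. I would handle this by a perturbation argument: shrink the coefficient of $F_i$ in $\Gamma$ by a small $\epsilon > 0$, making its $G$-coefficient $-\epsilon < 0$ so the negativity lemma forces $F_i$ to survive; then at the end restore the coefficient of $F_i$ to $1$ and observe that the resulting $\tilde G$ is still $p$-exceptional and antieffective (the restoration adds $\epsilon F_i$ to cancel the $-\epsilon$ contribution on the surviving $F_i$), so $K_{\tilde Y}+\tilde\Gamma$ remains $p$-nef by negativity and the dlt property is preserved.
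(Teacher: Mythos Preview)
Your argument has a genuine gap at the survival step. The negativity lemma gives you only one direction: any $g$-exceptional prime that \emph{survives} the MMP must have $G$-coefficient $\le 0$. It does \emph{not} say that every prime with $G$-coefficient $\le 0$ survives. The MMP decides what to contract according to the sign of $(K_Y+\Gamma)\cdot R$ on extremal rays, which has no direct relation to the sign of the $G$-coefficient of a given $F_i$. So even when $a(F_i,X,\Delta)<-1$ (your ``easy'' case) nothing prevents the MMP from contracting $F_i$; ``not forced out'' is not the same as ``forced to stay.'' Your perturbation for the borderline case $a(F_i,X,\Delta)=-1$ does not help either: if $F_i$ is contracted, it simply does not appear in $\tilde G'$, and $\tilde G'\le 0$ is perfectly consistent with that---there is no contradiction to extract. (Also, restoring the coefficient at the end does not give $p$-nefness: $K_{\tilde Y}+\tilde\Gamma'+\epsilon\tilde F_i$ need not be nef just because $K_{\tilde Y}+\tilde\Gamma'$ is.) Finally, even if one $F_i$ over each $W_i$ did survive, you would still have to rule out components of $p^{-1}(W)$ of codimension $\ge 2$, which is a separate issue you have not addressed.

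The paper's proof proceeds quite differently and sidesteps all of this. It first passes to the lc modification $(X^{\rm lc},\Delta^{\rm lc})\to (X,\Delta)$, whose exceptional locus is purely divisorial because $K_{X^{\rm lc}}+\Delta^{\rm lc}$ is relatively ample; this reduces to the case where $(X,\Delta)$ is lc. Then, starting from any dlt modification $(Y_1,\Gamma_1)\to (X^{\rm lc},\Delta^{\rm lc})$ (which is automatically crepant, so $K_{Y_1}+\Gamma_1\equiv_{X^{\rm lc}}0$), it runs a \emph{klt} MMP for $(Y_1,\Gamma_1^{<1})$ over $X^{\rm lc}$; on the resulting minimal model $Y_2$ one has $-\Gamma_2^{=1}\sim_{\mathbb Q,X^{\rm lc}}K_{Y_2}+\Gamma_2^{<1}$ nef. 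This nefness forces every curve in a fibre over $W^{\rm lc}$ either to lie in $\supp\Gamma_2^{=1}$ or to be disjoint from it, yielding $\supp p_2^{-1}(W^{\rm lc})=\supp\Gamma_2^{=1}$ in one stroke---no tracking of individual divisors through the MMP is needed. A final dlt modification of the lc pair $(Y_2,\Gamma_2)$ then gives the required $p\colon Y\to X$.
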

\begin{proof} Let $p^{\rm lc}:(X^{\rm lc},\Delta^{\rm lc})\to (X,\Delta)$ be
the log canonical modification as in Definition \ref{dlt-mod.defn}.
As we noted, $\ex(p^{\rm lc})$ has pure codimension 1
and it is contained in $\supp\Bigl((\Delta^{\rm lc})^{=1}\bigr)$. Thus
$W^{\rm lc}$, the non-klt locus of  $(X^{\rm lc},\Delta^{\rm lc}) $, 
is equal to  the preimage of $W$.

Since any dlt modification of $(X^{\rm lc}, \Delta^{\rm lc})$ is also  a dlt
modification of $(X,\Delta)$,  it suffices to show that
there is a dlt modification of $q:Y\to (X^{\rm lc},\Delta^{\rm lc})$ such that
${\rm Supp}(q^{-1}(W^{\rm lc}))$ is of pure codimension 1. 
 
Consider an arbitrary dlt modification 
$q_1: Y_1\to (X^{\rm lc}, \Delta^{\rm lc})$ and write 
$$q_1^*(K_{X^{\rm lc}}+\Delta^{\rm lc})\sim_{\mathbb Q}K_{Y_1}+\Gamma_1$$
then $ W^{\rm lc}=q_1(\Gamma^{=1}_1)$. It follows from \cite{BCHM10} that there
is a minimal model $p_2: Y_2\to X^{\rm lc}$ of $(Y_1, \Gamma^{<1}_1)$ over
$X^{\rm lc}$. 
Denote the push forward of  $\Gamma_1$ on $Y_2$ to be $\Gamma_2$.
Thus $-\Gamma_2^{=1}\sim_{\mathbb{Q}, X^{\rm lc}} K_{Y_2}+\Gamma^{<1}_2$
is nef, which implies that 
$${\rm Supp}(p_2^{-1}(W^{\rm lc}))={\rm Supp}(p_2^{-1}p_2 (\Gamma_2^{=1}))={\rm
Supp}(\Gamma_2^{=1})$$ has the same support as $\Gamma^{=1}_2$.

Though $(Y_2,\Gamma_2)$ is only log canonical, we can take any dlt modification
$r: Y\to(Y_2,\Gamma_2)$  and set $q=p_2\circ r$. 
Then ${\rm Supp}(q^{-1}W^{\rm lc})={\rm Supp}(r^*\Gamma^{=1}_2)$  is of pure
codimension 1 since $\Gamma^{=1}_2$ is $\mathbb{Q}$-Cartier. 
\end{proof}

\begin{say}[Proof of Theorem \ref{c-dlt}] 
Since a dlt pair is always the limit of klt pairs (see \cite[2.43]{KM98}), we can assume that $0\in (X,\Delta)$ is a point on a klt pair. 
As in  \cite[Lem.2.5]{HM06}
 there exists a $\mathbb{Q}$-divisor $H$ such that  $(X,\Delta+H)$ is klt on $X\setminus \{0\}$ and there is precisely  one divisor $E$ with discrepancy $-1$ over $ 0$. 
Thus $\DMR(0\in X,\Delta+H)$ is a single point. 

By (\ref{DMR-thm}.2)
$\DR(0\in X)$ is simple-homotopy equivalent to $\DMR(0\in X,\Delta+H)$
hence to a point.
 \qed
\end{say}

\begin{say}[Versions over nonclosed fields]\label{Gal-version.rem}
Let $k$ be a field of characteristic 0,
$X$ a normal variety defined over $k$ and
$0\in X$ a $k$-point. Let $f:X'\to X$ be a resolution
defined over $k$ such that $E':=\supp f^{-1}(0)$ is a
simple normal crossing divisor defined  over $k$. 
(See \cite[Defn.1.7]{kk-singbook} for the correct definition
of a simple normal crossing divisor.)
Set $\D(E'):=\D\bigl(E'_{\bar k}\bigr)$ where
$\bar k\supset k$ is an algebraic closure.
The Galois group $\operatorname{Gal}(\bar k/k)$ acts on
$\D(E')$ and the usual arguments show that 
$\DR(0\in X)$ is well defined up-to
$\operatorname{Gal}(\bar k/k)$-equivariant
simple-homotopy equivalence.

One needs to check that the collapses in Theorem \ref{subcomp.collapse.thm}
can be done equivariantly. Let $D_0$ be a divisor defined over $k$
such that $(D_0\cdot R)>0$. Over $\bar k$, it can break up into a
collection of disjoint divisors  $D_0^i$. The contraction
$h:X\to X_1$ of the ray $R$ becomes the contraction
of a face $F$ over $\bar k$, but $D_0$ is 
strictly negative on $F\setminus\{0\}$.
The proof of  Theorem \ref{subcomp.collapse.thm} now shows
that each contracted lc center over $\bar k$ intersects
exactly one of the $D_0^i$. Thus the collapses prescribed by
the different $D_0^i$ occur in disjoint sets so they can be performed
simultaneously.

The proofs of Theorems \ref{c-dlt}--\ref{t-RC}  apply without changes to
yield the following generalizations.
\end{say}

\begin{thm}\label{c-dlt-gal} 
Let $0\in X$ be a $k$-point on  a dlt pair $(X,\Delta)$ defined over $k$.
Then $\DR(0\in X)$ is $\operatorname{Gal}(\bar k/k)$-equivariantly
contractible. \qed
\end{thm}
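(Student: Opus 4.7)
The plan is to run the proof of Theorem \ref{c-dlt} entirely over $k$, then invoke the equivariance machinery set up in paragraph \ref{Gal-version.rem}. First I would reduce to the klt case: the approximation \cite[2.43]{KM98} expressing a dlt pair as a limit of klt pairs can be performed with a perturbation defined over $k$, so we may assume $(X,\Delta)$ is klt over $k$. This reduction is cosmetic but convenient.

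Next, following \cite[Lem.2.5]{HM06}, I would construct a $\mathbb{Q}$-divisor $H$ \emph{defined over $k$} such that $(X,\Delta+H)$ is klt on $X\setminus\{0\}$ and has exactly one divisor $E$ (a priori defined only over $\bar k$) of discrepancy $-1$ whose center lies over $0$. The key observation is that $E$ is singled out uniquely by this numerical property. Since the Galois group $\operatorname{Gal}(\bar k/k)$ acts on the set of divisors over $X_{\bar k}$ preserving discrepancies with respect to the $k$-rational boundary $\Delta+H$, uniqueness forces $E$ to be Galois-invariant. Consequently $\DMR(0\in X,\Delta+H)$ is a single vertex, automatically a Galois-fixed point.

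Finally, I would apply the Galois-equivariant version of Theorem \ref{DMR-thm}(2). As explained in \ref{Gal-version.rem}, the simple-homotopy equivalence between $\DR(0\in X)$ and $\DMR(0\in X,\Delta+H)$ is assembled from the collapses of Theorem \ref{subcomp.collapse.thm}, and each such collapse can be executed equivariantly: if a distinguished divisor $D_0$ defined over $k$ splits over $\bar k$ into disjoint components $D_0^i$, then the star/link pairs prescribed by the different $D_0^i$ live in disjoint subcomplexes, so the Galois orbit of any one collapse can be performed simultaneously. This yields an equivariant simple-homotopy equivalence of $\DR(0\in X)$ with a single fixed point, which is precisely equivariant contractibility.

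The main obstacle is making sure that the auxiliary divisor $H$ and the intermediate dlt modifications can all be arranged over $k$. The klt perturbation step is routine since it only requires a small $k$-rational adjustment. The genuine content is the uniqueness of $E$: once we know there is only one divisor of discrepancy $-1$, Galois invariance is forced, and the equivariance of everything downstream reduces to the orbit-disjointness argument of \ref{Gal-version.rem}, which is already in hand.
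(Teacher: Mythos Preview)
Your proposal is correct and follows exactly the route the paper indicates: the paper's proof of Theorem~\ref{c-dlt-gal} is simply the statement that the argument for Theorem~\ref{c-dlt} goes through over $k$ once the equivariance of the collapses in Theorem~\ref{subcomp.collapse.thm} has been established in paragraph~\ref{Gal-version.rem}. You have made explicit the one point the paper leaves tacit, namely that the uniqueness of the discrepancy-$(-1)$ divisor $E$ forces its Galois-invariance and hence fixes the target vertex of the equivariant contraction.
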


\begin{thm}\label{t-RC-gal}
Let $0\in C$ be a $k$-point on a smooth curve
and $f\colon X\to (0\in C)$ a flat, projective morphism.
 Assume that the general geometric fibers $X_t$ are smooth, rationally connected
and $F_0:=\red f^{-1}(0)$ is a simple normal crossing divisor. Then 
  $\D(F_0)$ is $\operatorname{Gal}(\bar k/k)$-equivariantly contractible. \qed
\end{thm}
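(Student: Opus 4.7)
The plan is to imitate the proof of Theorem \ref{t-RC} and verify that each step can be made $\operatorname{Gal}(\bar k/k)$-equivariant, following the blueprint in Paragraph \ref{Gal-version.rem}. Since all the data — $(0\in C)$, $f$, $X$ and $F_0$ — are defined over $k$, one would first fix a log resolution $g\colon Y\to X$ defined over $k$ making $g^{-1}(F_0)$ snc. Then, exactly as in the proof of Theorem \ref{t-RC} (carried out in Section \ref{sec.quot.dlt} via quotient-dlt pairs and Fano contractions), I would choose a $k$-rational auxiliary boundary $\Delta$ and produce a dlt modification $(X^{\rm dlt},\Delta^{\rm dlt})\to (X,\Delta)$ over $C$; by the uniqueness-up-to-PL-homeomorphism of $\DMR$ in Definition \ref{DMR.defn}, this modification can be chosen $\operatorname{Gal}(\bar k/k)$-equivariantly, and $\DMR$ acquires a natural Galois action.

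Next, I would run the $(Y,\Gamma)$-MMP over $C$ as in the proof of Theorem \ref{t-sdr} and Paragraph \ref{pf.of.t-sdr}, and argue that each application of Theorem \ref{subcomp.collapse.thm} along the way is $\operatorname{Gal}(\bar k/k)$-equivariant. This is the heart of the argument and is precisely what Paragraph \ref{Gal-version.rem} establishes: given a $k$-rational extremal ray $R$ and a $k$-rational divisor $D_0$ with $(D_0\cdot R)>0$, the base change $D_{0,\bar k}$ may split into disjoint components $D_0^i$, but the bijection between star-type and link-type cells constructed in the proof of Theorem \ref{subcomp.collapse.thm} partitions the contracted lc centers according to which $D_0^i$ they meet. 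Hence the free pairs $\bigl(\langle v_{D_0^i},v_W\rangle,v_W\bigr)$ pertaining to distinct $D_0^i$ live in disjoint regions of $\D(\Gamma^{=1})$, so they can be collapsed simultaneously in a Galois-equivariant fashion, and the resulting collapse is $\operatorname{Gal}(\bar k/k)$-equivariant by symmetry.

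Finally, rational connectedness of the general geometric fiber is used exactly as in Theorem \ref{t-RC} to force the terminal dlt model over $C$ to have $\DMR$ equal to a single point. Since that point arises as the image of the unique $k$-rational special fiber $f^{-1}(0)$ under a canonical construction carried out over $k$, it is automatically $\operatorname{Gal}(\bar k/k)$-fixed. Combining the equivariant collapse from Step 2 with the equivariant identification $\DMR\cong\mathrm{pt}$ from Step 3 yields the desired $\operatorname{Gal}(\bar k/k)$-equivariant contractibility of $\D(F_0)$.

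The main obstacle, and the reason the argument is not completely automatic from Theorem \ref{t-RC}, is the interaction between MMP steps and the Galois action: one must check that at every step the relevant extremal ray and the divisor $D_0$ witnessing $(D_0\cdot R)>0$ can be taken to be $k$-rational, and that if $D_0$ breaks up over $\bar k$, then the simultaneous collapses along its Galois-conjugates do not interact. Both points are handled by the disjointness observation in Paragraph \ref{Gal-version.rem}; I expect that once this is carefully set up, the rest of the proof of Theorem \ref{t-RC} transfers verbatim.
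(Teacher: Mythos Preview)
Your proposal is essentially the paper's approach: the paper simply states that the proof of Theorem~\ref{t-RC} applies without changes once one knows (from Paragraph~\ref{Gal-version.rem}) that the collapses in Theorem~\ref{subcomp.collapse.thm} can be performed $\operatorname{Gal}(\bar k/k)$-equivariantly, and you have correctly identified this as the heart of the matter.

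That said, you have over-engineered the outline by importing machinery from the proof of Theorem~\ref{c-dlt} that is not used here. There is no auxiliary boundary and no separate dlt modification step: in the proof of Theorem~\ref{t-RC} (via Theorem~\ref{t-rat}) one takes $\Delta=F_0=\red f^{-1}(0)$ itself, runs a $(K_X+\Delta)$-MMP over $C$ directly, and uses Corollary~\ref{some.mmp.collapses.cor} (the numerically trivial divisor being $f^{-1}(0)$) to guarantee a $D_0$ at each step. The endgame is not that $\DMR$ is a point via some uniqueness argument, but rather the inductive Fano-contraction argument of Theorem~\ref{t-rat}: either the Fano base is $C$ and the fiber is irreducible, or one passes to a lower-dimensional qdlt pair via Proposition~\ref{gdlt.fano.contr.prop}. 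Each of these steps is defined over $k$, and the equivariance of the collapses follows exactly as you describe from the disjointness of the regions governed by the conjugate components $D_0^i$.
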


Note that a contractible space with a finite group action need not be
equivariantly contractible (cf.\ \cite{Flo-Ric}),
 thus,  in this respect, the dual complexes coming from
algebraic geometry  behave better than arbitrary
simplicial complexes.

\section{Quotients of  dlt pairs}\label{sec.quot.dlt}

\begin{prop}\label{qdlt.prop}
Let $(x\in X)$ be the spectrum of a $d$-dimensional local ring
and $\Delta$ an effective divisor such that $(X,\Delta)$ is lc
and $x$ is an lc center of $(X, \Delta)$. The following are equivalent.
\begin{enumerate}
\item There are ${\mathbb Q}$-Cartier divisors
$D_1, \dots, D_d\subset \Delta^{=1}$
such that $x\in D_i$. 
\item There is a semi-local, snc pair
$\bigl(x'\in X', D'_1+ \dots + D'_d\bigr)$
and an Abelian group $G$ acting on it  (preserving each of the $D'_i$)
such that
$$
(x\in X, \Delta)=\bigl(x\in X, D_1+ \dots + D_d\bigr)\cong
\bigl(x'\in X', D'_1+ \dots + D'_d\bigr)/G.
$$
\end{enumerate}
\end{prop}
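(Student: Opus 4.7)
The plan is to handle the direction $(2)\Rightarrow(1)$ by pushing the Cartier property down under the finite quotient, and the direction $(1)\Rightarrow(2)$ by constructing a simultaneous index-one cyclic cover that trivializes every $D_i$ at once, then showing the cover is smooth and snc at the preimage of $x$ by an inductive adjunction argument.

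For $(2)\Rightarrow(1)$, let $\pi\colon X'\to X$ be the quotient by $G$. Since $G$ is abelian and preserves each $D_i'$, the divisor $D_i'$ is $G$-invariant and some $G$-linearized multiple of $\mathcal{O}_{X'}(D_i')$ descends to an invertible sheaf on $X$. Hence $mD_i$ is Cartier for some $m>0$, so $D_i=\pi_*D_i'$ is $\mathbb{Q}$-Cartier; the conditions $x\in D_i$ and $D_i\subset\Delta^{=1}$ are immediate from the set-up.

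For $(1)\Rightarrow(2)$, let $n_i$ be the Cartier index of $D_i$ at $x$ and, working in the semi-local ring, fix an isomorphism $\mathcal{O}_X(n_iD_i)\simeq\mathcal{O}_X$. This gives a cyclic cover $\pi_i\colon Y_i\to X$ of degree $n_i$ along which $D_i$ becomes Cartier. Take the normalized fibre product $X':=(Y_1\times_X\cdots\times_X Y_d)^{\mathrm{nor}}$; the resulting finite cover $\pi\colon X'\to X$ is Galois with abelian group $G=\prod_i\mathbb{Z}/n_i$, each $G$ factor preserving the corresponding $D_i'=(\pi^{-1}D_i)_{\mathrm{red}}$. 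The ramification formula for these standard index-one covers gives $K_{X'}+\sum D_i'=\pi^*(K_X+\sum D_i)$, so $(X',\sum D_i')$ is lc, the preimage $x'$ of $x$ is a $0$-dimensional lc center, each $D_i'$ is Cartier at $x'$, and the quotient $(X,\sum D_i)=(X',\sum D_i')/G$ is realised by construction.

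The substantive claim is that $(X',\sum D_i')$ is snc at $x'$; we argue by induction on $d$. The base case $d=1$ says that a normal $1$-dimensional local ring admitting a reduced Cartier maximal ideal is regular, which is clear. For the inductive step, $D_1'$ is a Cartier prime divisor and an lc center of $(X',\sum D_i')$, so adjunction in the form of Kollár's book on singularities of the MMP applies: because $D_1'$ is Cartier the different vanishes, and the pair $\bigl(D_1'^{\,\nu},\sum_{i\geq 2}D_i'|_{D_1'^{\,\nu}}\bigr)$ is lc of dimension $d-1$ with $d-1$ Cartier boundary components meeting at a preimage $x''$ of $x'$ which is a $0$-dimensional lc center. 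By induction $D_1'^{\,\nu}$ is smooth at $x''$ and the restricted boundary is snc there. This forces $D_1'$ itself to be smooth at $x'$ (its normalization is a local isomorphism at a smooth point mapping bijectively onto $x'$, since lc forces $D_1'$ to be $S_2$ at a maximal lc center). A Cartier divisor that is smooth at $x'$ inside a normal variety forces the ambient $X'$ to be smooth along it; combining with the snc conclusion on $D_1'^{\,\nu}$ then gives snc for $(X',\sum D_i')$ at $x'$.

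The main obstacle is exactly the inductive step: one needs the correct form of adjunction to a log canonical centre without the different (which relies on $D_1'$ being Cartier) and one must control the passage between $D_1'$ and its normalization so that the inductive hypothesis can be transferred back to $X'$. Once this is in place, the rest of the argument is bookkeeping with cyclic covers.
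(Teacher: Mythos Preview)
Your approach is the same as the paper's: construct the abelian cover as a composite of index-one cyclic covers for the $D_i$, obtain an lc pair $(X',\sum D_i')$ with each $D_i'$ Cartier, and prove smoothness by induction on $d$ via adjunction, the key step being that a smooth Cartier divisor forces the ambient space to be smooth. You are in fact more careful than the paper in passing to the normalization $D_1'^{\,\nu}$; note however that Cartier-ness of $D_1'$ alone does not kill the conductor contribution to the different---what one actually uses is that an lc pair on a $(d{-}1)$-fold already carrying $d{-}1$ Cartier boundary components through $x''$ cannot support any further effective boundary there, so the conductor (and hence the non-normal locus of $D_1'$) must avoid $x'$, after which your induction and the transfer back from $D_1'^{\,\nu}$ to $D_1'$ go through.
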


Proof. The implication (2) $\Rightarrow$ (1) is well known;
cf.\ \cite[Prop.5.20]{KM98}.

To  converse is outlined in \cite[Sec.18]{K-etal92}.
We construct $\pi:X'\to X$ as follows.
By assumption, for every $i$ there is an
$m_i>0$ such that $m_iD_i\sim 0$. These give degree $m_i$ cyclic covers
$X'_i\to X$; let $\pi:X'\to X$ be their composite. Then
$X'\to X$ is Galois with group $\prod_i {\mathbb Z}/m_i$
and it branches only along the $D_i$.
Set $D'_i:=\operatorname{red} \pi^{-1}(D_i)$. Then
$\bigl(X', D'_1+ \dots + D'_d\bigr) $ is lc by
\cite[Prop.5.20]{KM98}.

In general $x':=\pi^{-1}(x)$ may consist of several points.
At each of them, the $D'_i$ are Cartier. We claim that in  fact
$X'$ and the $D'_i$ are smooth. This is proved by induction on the dimension.
The $d=1$ case is clear.

By adjunction,
$\bigl(D'_d, D'_1|_{D'_d}+ \dots + D'_{d-1}|_{D'_d}\bigr) $ is lc,
thus $D'_d$ is smooth by induction. Since $D'_d$ is a Cartier divisor,
this implies that $X'$ is smooth. \qed

\begin{defn} \label{qdlt.defn}
A log canonical pair  $(X,\Delta)$ is called {\it quotient-dlt},
abbreviated as {\it qdlt,} if for every
lc center $Z\subset X$  the local scheme
$$
\bigl(\spec \mathcal{O}_{Z,X}, \Delta|_{  \spec \mathcal{O}_{Z,X}}\bigr)
$$
satisfies the equivalent conditions of Proposition \ref{qdlt.prop}.
\end{defn}
It is clear that being qdlt is preserved by any
$(X, \Delta)$-MMP.
It follows from Definition \ref{dual.cpx.defn}, that we can define
$\D(\Delta^{=1})$ for a qdlt pair. It seems to us that, for
dual-complex problems, qdlt pairs form the most general class
where the main results hold.
We need the following lemma on extending qdlt modifications.

\begin{lem}\label{qdlt.rxtend.lem}
  Let $(X,\Delta)$ be a quasi-projective, lc pair.
Let $U\subset X$ be an open subset such that no lc center
is contained in $X\setminus U$.
Let $g_U: U'\to U$ be a quasi-projective, qdlt modification.
Then there is a quasi-projective, qdlt modification 
 $g_X: X'\to X$ extending $g_U$.
\end{lem}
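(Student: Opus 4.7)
The plan is to produce $g_X$ by running a relative minimal model program on a common log resolution of $X$ and a compactification of $U'$, arranged so that over $U$ the MMP does nothing essential.

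\textbf{Step 1 (Compactification and resolution).} Choose a projective compactification $\bar U'\supset U'$, let $W$ be the closure in $\bar U'\times X$ of the graph of $g_U$, and let $h\colon Z\to W\to X$ be a projective log resolution such that $\bigl(Z, h^{-1}_*\Delta+\ex(h)\bigr)$ is snc. The morphism $h$ then factors over $U$ as $h^{-1}(U)\xrightarrow{\phi} U'\xrightarrow{g_U} U$.

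\textbf{Step 2 (Boundary and MMP).} Following the recipe from the proof of Theorem~\ref{t-sdr}, set
$$
\Gamma := h^{-1}_* \Delta^{<1} + E + \sum_j b_j F_j,
$$
where $E$ is the sum of those prime $h$-exceptional divisors $D$ with either $h(D)\subset X\setminus U$ or $\phi(D)$ a component of $\Delta_{U'}^{=1}$, and the $F_j$ are the remaining $h$-exceptional divisors with $b_j = \max\{(1-a(F_j,X,\Delta))/2,\,1/2\}<1$. Then $(Z,\Gamma)$ is snc (hence qdlt), and $K_Z+\Gamma-h^*(K_X+\Delta)$ is effective, $h$-exceptional, and supported exactly on $\bigcup F_j$. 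I run a $(K_Z+\Gamma)$-MMP over $X$; by \cite{BCHM10} it terminates at a quasi-projective model $g_X\colon X'\to X$ with $K_{X'}+\Delta_{X'}$ nef over $X$. As in the proof of Theorem~\ref{t-sdr}, the negativity lemma forces the MMP to contract every $F_j$; since being qdlt is preserved by MMP, $(X',\Delta_{X'})$ is qdlt, so $g_X$ is a qdlt modification of $(X,\Delta)$.

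\textbf{Step 3 (Matching over $U$).} The restriction of the MMP to $h^{-1}(U)$ is an MMP over $U$ whose terminal model is qdlt with nef log canonical class. By construction, $\Gamma|_{h^{-1}(U)}$ is the log pull-back of $\Delta_{U'}$ modulo the positive-discrepancy contribution $\sum b_jF_j$, so this sub-MMP terminates at a qdlt modification of $U$ crepant-birational to $g_U$. The hypothesis that no lc center of $(X,\Delta)$ lies in $X\setminus U$ ensures that every component of $\Delta_{X'}^{=1}$ meets $g_X^{-1}(U)$, so the divisors of coefficient one on the sub-model coincide with those on $U'$. Two qdlt models over $U$ with identical discrepancy-$-1$ divisors differ only by a small $\mathbb Q$-factorial modification (Definition~\ref{Qf.defn}); after composing $g_X$ with such a modification globally, one obtains a qdlt modification of $(X,\Delta)$ with $g_X^{-1}(U)\cong U'$ and $g_X|_{U'}=g_U$. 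The main obstacle will be this last identification: the MMP only delivers the answer up to small $\mathbb Q$-factorial modifications, so exhibiting a global small $\mathbb Q$-factorial modification on $X'$ that realizes the flops between $X'|_U$ and $U'$ is the technical heart of the lemma.
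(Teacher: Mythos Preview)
Your Step~3 contains a genuine gap that you yourself flag but do not resolve. Running the $(K_Z+\Gamma)$-MMP directly over $X$ produces a qdlt modification $X'\to X$ whose restriction to $U$ is only crepant-birational to $U'$, not isomorphic to it. You propose to repair this by composing with a global small $\mathbb Q$-factorial modification realizing the flops over $U$, but you give no construction of such a global modification, and there is no reason for the flops connecting $X'|_U$ to $U'$ to extend over $X\setminus U$. (There is also no reason for $U'$ itself to be $\mathbb Q$-factorial, so the discrepancy may be more than a sequence of flops.) A secondary issue is that in Step~2 you invoke \cite{BCHM10} for the pair $(Z,\Gamma)$, which is dlt but not klt; since $(X,\Delta)$ is only assumed lc, the perturbation trick from Paragraph~\ref{pf.of.t-sdr} is not immediately available, and one needs \cite{HX12} or an equivalent.

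The paper avoids the matching problem by running the MMP in \emph{two stages}. First one takes the log canonical model of $(Y,\Delta_Y)$ over a compactification $\bar X$ of $U'$ (not over $X$), using \cite{HX12}. Over $U$ this is the lc model over $U'$; since $K_Y+\Delta_Y$ is, over $U'$, $\mathbb Q$-linearly equivalent to an effective $\phi$-exceptional divisor, that lc model is $U'$ itself. Thus the output $X''$ is an honest compactification of $U'$. The second stage is an MMP on $X''$ over $X$ with a klt perturbation $\Delta''_\epsilon\sim_{\mathbb Q}\Delta''+\epsilon H''$: because $K_{X''}+\Delta''_\epsilon$ is already ample on $U'$ over $U$, every step of this MMP is an isomorphism on $U'$, and it contracts exactly the divisorial part of $X''\setminus U'$. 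The factorization through $\bar X$ is precisely the device that pins down the model over $U$, and it is this idea that is missing from your argument.
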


Note that since  no lc center
is contained in $X\setminus U$, 
every exceptional divisor of $g_X$ intersects $U'$.
\medskip

Proof. Let $\bar{X}$  be a normal compactification of $U'$ that  is
projective over $X$. Let $Y$ be a projective resolution of $\bar{X}$
such that the preimage of $\supp \Delta_{U'}\cup\bigl(\bar{X}\setminus U'\bigr)$
is a simple normal crossing divisor.

Denote by $p:Y\to X$ the composite morphism. For  $p$-exceptional
divisors $E_i$ let $b_i=\max \{\frac{1}{2},\frac{1-a(E_i,
X,\Delta)}{2}\}$ and set
$\Delta_Y=p_*^{-1}\Delta+\sum_i b_iE_i $. Applying \cite{HX12}, we obtain
that $(Y,\Delta_Y)$ has a log canonical model $X''$ over $\bar{X}$ that
is a compactification of  $U'$. 

Let $\Delta''$  be the push forward of $\Delta_Y$. Then 
$(X'',\Delta'')$ does not have log canonical centers contained in
$X''\setminus U'$ hence $(X'',\Delta'')$ is qdlt. Furthermore, 
 $K_{X''}+\Delta''-h^*(K_{X}+\Delta)$
 is effective and its support is the same as the divisorial part of
$X''\setminus U'$ which we denote by $\sum_{i\in I}E_i$. We conclude that
$\{E_i\colon i\in I\}$ are precisely the divisorial components in the stable
base locus $${\bf B}(X''/X, K_{X''}+\Delta'')={\bf
B}_{-}(X''/X,K_{X''}+\Delta'').$$
That is, if  $H''$ is an ample divisor on $X''$
 there is a $0<\epsilon\ll 1$ such that all the 
$E_i$ are also contained in ${\bf
B}(X''/X, K_{X''}+\Delta''+\epsilon H'')$.

By the method of \cite[2.43]{KM98}, 
 there is a divisor $\Delta''_{\epsilon}$ on $X''$ such
that  $\Delta''_{\epsilon}\sim_{\mathbb{Q}}K_{X''}+\Delta''+\epsilon H'' $
and $(X'',\Delta''_{\epsilon})$ is klt.

 By \cite{BCHM10} a suitable $(X'',\Delta''_{\epsilon})$-MMP  over $X$
 terminates with a minimal model $X'$. Then the rational map
$\phi:X''\dasharrow X'$ is an isomorphism on $U'$ and 
it contracts all the $E_i$. Set $\Delta':=\phi_*\Delta''$.
Then $K_{X'}+\Delta'\sim_{\mathbb{Q},X}0$ and  $X'\setminus U'$
has codimension $\geq 2$. Therefore
$g_X:X'\to X$ is a  quasi-projective, qdlt modification extending $g_U$.
\qed
\medskip

This implies that  qdlt pairs have a 
toroidal dlt modification.

\begin{prop}\label{qdlt.toroidal.prop}
Let $(X,\Delta)$ be a  quasi projective, qdlt pair. 
 Let $U\subset X$ be an open set containing all the log canonical centers of $(X,\Delta)$ such that $\Delta|_U=\Delta^{=1}|_U$
and  $(U,\Delta_U)=(U,\Delta^{=1}|_U)$ is toroidal (such $U$ exists by definition).
Then there exists a dlt modification $g^{dlt}:(X^{dlt},\Delta^{dlt})\to (X,\Delta)$ such that over $U$, $(U', \Delta_{U'}):=(g^{-1}(U),\Delta^{dlt}|_{U'})$ is snc, 
$g^{dlt}|_{U'}: (U', \Delta_{U'})\to (U,\Delta_U)$
is toroidal and
 $\D((\Delta^{dlt})^{=1})$ is a subdivision of $\D(\Delta^{=1})$. 
\end{prop}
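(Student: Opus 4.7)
The plan is to build $g^{dlt}$ in two stages: first construct a toroidal resolution directly over $U$, and then extend it globally using Lemma \ref{qdlt.rxtend.lem}. Over $U$, the pair $(U, \Delta_U)$ is toroidal with $\Delta_U = \Delta^{=1}|_U$, so one can fix a regular simplicial subdivision of the associated fan complex and take the corresponding toroidal birational morphism $g_U \colon (U', \Delta_{U'}) \to (U, \Delta_U)$, where $\Delta_{U'} := (g_U^{-1}\Delta_U)_{\red}$. Then $U'$ is smooth, $\Delta_{U'}$ is snc, $g_U|_{U'}$ is toroidal, and because a toroidal modification with reduced boundary is crepant one has $K_{U'} + \Delta_{U'} = g_U^*(K_U + \Delta_U)$, so $K_{U'} + \Delta_{U'}$ is $g_U$-numerically trivial. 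Thus $g_U$ is a dlt (in particular qdlt) modification of $(U, \Delta_U)$, and by the standard correspondence between cones of the subdivision and strata of the resolution, $\D(\Delta_{U'})$ is precisely the chosen subdivision of $\D(\Delta_U) = \D(\Delta^{=1})$ (the last equality using that $U$ contains every lc center of $(X, \Delta)$).

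Next I would apply Lemma \ref{qdlt.rxtend.lem} to extend $g_U$ to a quasi-projective qdlt modification $g^{dlt} \colon X^{dlt} \to X$. The construction in that lemma attaches additional exceptional divisors $F_i$ lying over $X \setminus U$ with coefficients $b_i = \max\{\tfrac{1}{2}, \tfrac{1 - a(F_i, X, \Delta)}{2}\} < 1$ in $\Delta^{dlt}$, so no such $F_i$ lies in $(\Delta^{dlt})^{=1}$. Consequently the lc centers of $(X^{dlt}, \Delta^{dlt})$ coincide with the strata of $\Delta_{U'}$ inside $U'$, which is snc. This says no lc center of $(X^{dlt}, \Delta^{dlt})$ is contained in the non-snc locus, so $g^{dlt}$ is actually a dlt modification, and $\D((\Delta^{dlt})^{=1}) = \D(\Delta_{U'})$ is the asserted subdivision of $\D(\Delta^{=1})$.

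The main obstacle will be verifying that the MMP invoked inside the proof of Lemma \ref{qdlt.rxtend.lem} genuinely preserves the toroidal model $g_U$ over $U$. The key points are: (i) one can choose the initial compactification of $U'$ so that $\Delta_{U'}$ extends with coefficient $1$ along every $g_U$-exceptional divisor; and (ii) since $K_{U'} + \Delta_{U'}$ is already $g_U$-numerically trivial and the auxiliary $b_i$-coefficients are only placed on divisors over $X \setminus U$, no $(K + \Gamma)$-negative extremal ray is supported over $U$, hence the MMP is stationary there. Once these points are confirmed, the snc property, toroidality, and the dual-complex-subdivision statement over $U$ all transfer immediately from Step~1 to $g^{dlt}$.
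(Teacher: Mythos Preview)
Your approach is the same as the paper's: construct a toroidal log resolution $g_U$ over $U$ via \cite{KKMSD73}, note crepancy and the subdivision of dual complexes, then extend using Lemma~\ref{qdlt.rxtend.lem}. The paper's proof is three sentences and leaves the verification that the output is genuinely dlt (rather than merely qdlt) implicit; you are right to flag this.

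However, your reading of Lemma~\ref{qdlt.rxtend.lem} is inaccurate. The coefficients $b_i = \max\{\tfrac12, \tfrac{1-a(E_i,X,\Delta)}{2}\}$ appear only on the \emph{intermediate} resolution $Y$; the subsequent constructions contract every divisorial component of $X''\setminus U'$, so the final output $X'$ satisfies $\operatorname{codim}(X'\setminus U')\ge 2$. There simply are no ``additional exceptional divisors $F_i$ lying over $X\setminus U$'' in $\Delta^{dlt}$. The correct argument for dlt-ness is cleaner: since $(X,\Delta)$ is already lc, any qdlt modification is crepant, so every $g_X$-exceptional divisor has discrepancy $-1$ and hence has center an lc center of $(X,\Delta)$, which lies in $U$. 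Thus every exceptional divisor meets $U'$ (this is the remark the paper inserts after the lemma), all lc centers of $(X^{dlt},\Delta^{dlt})$ are strata of the snc pair $(U',\Delta_{U'})$, and the pair is dlt.

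Your ``obstacle'' paragraph is similarly answered by looking more closely at the lemma's proof rather than by new work. For (i), the $g_U$-exceptional divisors have $a=-1$, so the formula gives $b_i=1$ automatically. For (ii), the proof of the lemma shows explicitly that both the passage to the lc model $X''$ over $\bar X$ and the subsequent klt MMP are isomorphisms over $U'$, so the toroidal structure is preserved without further argument.
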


\begin{proof} Given the toroidal pair $(U,\Delta_U)$ by \cite{KKMSD73}
 there is a toroidal log resolution  $g_U:(U',\Delta_{U'}) \to (U,\Delta)$.
 Since it is a toroidal morphism, 
$g_U^*(K_U+\Delta_U)=K_{U'}+\Delta_{U'}$ and $\D(\Delta_{U'})$ is subdivision of $\D(\Delta_U)$. By Lemma \ref{qdlt.rxtend.lem}, we can extend
$g_U:U'\to U$ to $g_X:X'\to X$. \end{proof}

\begin{cor} \label{qdlt.toroidal.cor2}
Let $(X,\Delta_X)$ be a  quasi projective, qdlt pair
and $g: \bigl(Y, \Delta_Y\bigr)\to (X,\Delta_X)$
a dlt modification. Then
$\D\bigl(\Delta_X^{=1}\bigr)$ is PL homeomorphic to
$\D\bigl(\Delta_Y^{=1}\bigr)$. \qed
\end{cor}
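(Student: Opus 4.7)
The plan is to produce an auxiliary dlt modification of $(X,\Delta_X)$ via Proposition \ref{qdlt.toroidal.prop}, and then compare it with the given $(Y,\Delta_Y)$ using the crepant-birational invariance of Proposition \ref{crep.bir.dlt.models}. The key observation is that Proposition \ref{qdlt.toroidal.prop} already supplies one dlt modification whose dual complex is related to $\D(\Delta_X^{=1})$ by a subdivision (hence, in particular, by a PL homeomorphism of underlying polyhedra).

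First, I would invoke the qdlt hypothesis (and Proposition \ref{qdlt.prop}) to construct a neighborhood $U \subset X$ of all the lc centers of $(X,\Delta_X)$ such that $(U,\Delta_X|_U)$ is toroidal; this uses the local model $D_1+\cdots+D_d$ from Proposition \ref{qdlt.prop} at each lc center, where $\Delta_X|_U = \Delta_X^{=1}|_U$. Applying Proposition \ref{qdlt.toroidal.prop} to this $U$ then furnishes a dlt modification $g^{\rm dlt}\colon (X^{\rm dlt}, \Delta^{\rm dlt}) \to (X,\Delta_X)$ such that $\D((\Delta^{\rm dlt})^{=1})$ is a subdivision of $\D(\Delta_X^{=1})$. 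Since any subdivision of a regular cell complex induces a canonical PL homeomorphism of underlying spaces, we obtain
$$
\D\bigl((\Delta^{\rm dlt})^{=1}\bigr) \;\cong_{\rm PL}\; \D\bigl(\Delta_X^{=1}\bigr).
$$

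Second, I would compare $(X^{\rm dlt}, \Delta^{\rm dlt})$ with the hypothesized dlt modification $(Y, \Delta_Y)$. Both are dlt modifications of the same pair $(X,\Delta_X)$ in the sense of Definition \ref{dlt-mod.defn}, so by the observation recorded in Definition \ref{DMR.defn} (following the argument of \cite[3.52]{KM98}) they are crepant birational over $X$. Since both pairs are themselves dlt, Proposition \ref{crep.bir.dlt.models} applies directly, yielding
$$
\D\bigl((\Delta^{\rm dlt})^{=1}\bigr) \;\cong_{\rm PL}\; \D\bigl(\Delta_Y^{=1}\bigr).
$$
Chaining the two PL homeomorphisms finishes the proof.

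The main step requiring care is the crepant-birational comparison of two dlt modifications of a qdlt (not a priori dlt) pair: one needs to know that the two modifications pull $K_X+\Delta_X$ back to the same log divisor on a common resolution, despite the fact that $(X,\Delta_X)$ itself is only qdlt. This is precisely what the cited fact in Definition \ref{DMR.defn} provides, so the obstacle is purely one of invoking the correct referenced result rather than a new argument. The remaining ingredients (existence of a suitable toroidal $U$ at lc centers, PL invariance under subdivision) are standard consequences of the qdlt definition and elementary PL topology.
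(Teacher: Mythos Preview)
Your proposal is correct and matches the paper's intended argument: the corollary is marked with an immediate \qed precisely because it follows by combining Proposition~\ref{qdlt.toroidal.prop} (giving one dlt modification whose dual complex subdivides $\D(\Delta_X^{=1})$) with the crepant-birational invariance of Proposition~\ref{crep.bir.dlt.models} applied to two dlt modifications, exactly as you outline. Your caution about the base being only qdlt is unnecessary, since qdlt pairs are lc by definition and the crepant-birational comparison of dlt modifications (via the negativity lemma, as in Definition~\ref{DMR.defn}) needs only that $K_X+\Delta_X$ is $\mathbb{Q}$-Cartier.
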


Another good property of quotient-dlt singularities is that
the theorem on extracting one divisor
(cf.\ \cite[1.39]{kk-singbook}) 
that fails for dlt singularities
does hold for quotient-dlt singularities.

\begin{lem} Let $(X,\Delta)$ be a quasi-projective qdlt pair. Let $E$ be a divisor such that $a(E;X,\Delta)=-1$. Then there exists a model $f:Y\to X$ such that $E$ is the sole exceptional divisor of $f$ 
 and $(Y,f^{-1}_*\Delta+E)$ is qdlt. 
\end{lem}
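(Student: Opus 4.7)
The plan is to build a model where $E$ appears as the sole exceptional divisor by first understanding the situation locally (using the toroidal structure guaranteed by qdlt), then globalizing via MMP. Let $Z := c_X(E)$ denote the center of $E$ on $X$. Since $a(E;X,\Delta) = -1$, $Z$ is an lc center of $(X,\Delta)$, and by Proposition \ref{qdlt.prop} together with Proposition \ref{qdlt.toroidal.prop} there is an open neighborhood $U$ of the generic point of $Z$ such that $(U,\Delta|_U)$ is toroidal, arising as a finite abelian quotient of an snc pair $(U',\sum D'_i)$.

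First I would extract $E$ locally. Lift the divisorial valuation corresponding to $E$ to the snc cover $U'$; under a finite abelian quotient the lifted valuations are still toric/toroidal with respect to the snc boundary $\sum D'_i$, since their center on $U'$ is a stratum. Perform a weighted toroidal blow-up $h': V'\to U'$ extracting exactly those lifted valuations. By construction $h'$ is toroidal, so $V'$ together with $h'^{-1}_*(\sum D'_i)$ plus the new exceptional divisor remains snc. Taking the quotient by $G$ produces a proper birational morphism $h: V\to U$ whose sole exceptional divisor is $E$, and the pair $(V, h^{-1}_*\Delta|_U + E)$ is qdlt because it is realized again as a finite abelian quotient of an snc pair.

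Next I would globalize using Lemma \ref{qdlt.rxtend.lem}: no lc center of $(X,\Delta)$ is contained in $X\setminus U$ (shrink $U$ if necessary so that it contains all lc centers mapping into $Z$, and handle other lc centers by a separate application), so one obtains a proper extension $f: Y\to X$ of $h$ which is a qdlt modification. To ensure that $E$ is the \emph{only} $f$-exceptional divisor globally, I would, in the style of \ref{pf.of.t-sdr}, start from a log resolution $W\to X$ factoring through $V$, attach small coefficients $b_i = \max\{\tfrac12, \tfrac{1-a(F_i,X,\Delta)}{2}\}$ to each $f$-exceptional divisor $F_i \neq E$, and run the MMP for $(W, h^{-1}_*\Delta + E + \sum b_i F_i)$ over $X$. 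The auxiliary divisor $G := K_W + h^{-1}_*\Delta + E + \sum b_i F_i - (g^*(K_X+\Delta) - (-1)\cdot E)$ is effective with support equal to $\{F_i\}$, so the negativity lemma forces the MMP to contract every $F_i$ while preserving $E$ (whose coefficient is $1$ throughout). The output is the desired model $f: Y\to X$.

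The main obstacle is controlling qdlt (rather than just dlt) at every step. Dlt is automatically preserved under MMP by \cite[3.44]{KM98}, and being qdlt, as remarked after Definition \ref{qdlt.defn}, is preserved by any $(X,\Delta)$-MMP; this is the key point that distinguishes qdlt from dlt and is precisely why the analogue of \cite[1.39]{kk-singbook} holds here but fails in the general dlt setting. Verifying this preservation step by step — equivalently, that at each lc center of the output model one can still exhibit $d$ Cartier boundary components through the point as in Proposition \ref{qdlt.prop}(1) — is the central technical check, and it reduces via the Cartier description to the observation that the MMP steps are $\mathbb{Q}$-factorial birational operations that respect the quotient cover constructed in the first step.
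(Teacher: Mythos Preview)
Your strategy --- local toroidal extraction followed by Lemma~\ref{qdlt.rxtend.lem} --- is exactly the paper's, but you have made it substantially harder than necessary by choosing the wrong open set $U$. The paper takes $U$ to be an open set containing \emph{all} lc centers of $(X,\Delta)$ on which $(U,\Delta|_U)$ is toroidal; such a $U$ exists by the very definition of qdlt (this is what the hypothesis of Proposition~\ref{qdlt.toroidal.prop} records). Over this $U$ one performs a single toroidal extraction $g_U:U'\to U$ with $\ex(g_U)=E$; your description via the abelian cover is one way to see that this is possible. Then Lemma~\ref{qdlt.rxtend.lem} applies directly, and the remark immediately following its statement --- that every exceptional divisor of the extension $g_X$ meets $U'$ --- forces $\ex(g_X)=E$. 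That finishes the proof.

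By taking $U$ to be only a neighborhood of the generic point of $Z=c_X(E)$, you lose the hypothesis of Lemma~\ref{qdlt.rxtend.lem} (other lc centers may lie in $X\setminus U$) and are then forced into the vague workaround and the entire MMP argument, neither of which is needed. The MMP step as written also has a garbled formula for $G$ (the term $-(-1)\cdot E$ does not belong there, and the map $g$ is undefined), and the claim that the MMP preserves qdlt ``step by step'' would require more justification than you give --- but all of this disappears once you pick $U$ correctly.
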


\begin{proof} We may assume that $X$ is $\mathbb Q$-factorial.
We pick  $U$ as in Proposition \ref{qdlt.toroidal.prop} 
and let $g_U:U'\to U$ be the toroidal morphism  such that $\ex(g_U)=E$. 
 By Lemma \ref{qdlt.rxtend.lem}, we can extend
$g_U:U'\to U$ to $g_X:X'\to X$.\end{proof}

This following result is essentially in \cite[Sec.5]{HX09}.

\begin{prop}\label{gdlt.fano.contr.prop}
 Let $(X,\Delta_X)$ be a ${\mathbb Q}$-factorial
dlt or qdlt pair. Let 
$g\colon (X,\Delta_X)\to Y$ be a Fano contraction of an extremal ray.
Assume that $\Delta_X^{=1}$ is $g$-vertical.

Then $Y$ is ${\mathbb Q}$-factorial and there is a
${\mathbb Q}$-divisor $\Delta_Y$ such that
$(Y, \Delta_Y)$ is qdlt and
there is a natural identification
$\D\bigl(\Delta_X^{=1}\bigr)\cong \D\bigl(\Delta_Y^{=1}\bigr)$.
\end{prop}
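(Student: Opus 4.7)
The strategy is to first prove that $Y$ is $\mathbb Q$-factorial, then construct $\Delta_Y$ by descending components of $\Delta_X^{=1}$ via the canonical bundle formula for the Fano contraction $g$, and finally identify the log canonical strata of $(X,\Delta_X)$ with those of $(Y,\Delta_Y)$ via the induced map $g$.

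For $\mathbb Q$-factoriality: since $g$ contracts a single extremal ray of the $\mathbb Q$-factorial variety $X$, we have $\rho(X/Y)=1$. For any prime Weil divisor $B\subset Y$, its strict transform $\widetilde B := g^{-1}_* B$ is a $g$-vertical $\mathbb Q$-Cartier divisor on $X$ with $\widetilde B\cdot R=0$, so by the cone and contraction machinery $\widetilde B\sim_{\mathbb Q} g^*B'$ for some $\mathbb Q$-Cartier $B'$ on $Y$, which forces $B$ to be $\mathbb Q$-Cartier. For $\Delta_Y$: since $g$ is extremal and $\Delta_X^{=1}$ is vertical, each prime component $D_i$ of $\Delta_X^{=1}$ satisfies $D_i = m_i\, g^*D_i^Y$ on a neighborhood of the generic point of $D_i^Y:=g(D_i)$, with $D_i^Y\subset Y$ a prime $\mathbb Q$-Cartier divisor; the assignment $D_i\mapsto D_i^Y$ is injective because $D_i$ is the unique prime component of $g^*D_i^Y$ dominating $D_i^Y$. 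The canonical bundle formula for the Fano contraction produces $\Delta_Y$ with $K_X+\Delta_X\sim_{\mathbb Q}g^*(K_Y+\Delta_Y)$; a direct local computation using the ramification $m_i-1$ of $g$ at $D_i$ together with $\operatorname{coeff}_{D_i}\Delta_X=1$ shows that $\operatorname{coeff}_{D_i^Y}\Delta_Y=1$, so $\Delta_Y^{=1}=\sum_i D_i^Y$, while horizontal components and the moduli part contribute coefficients strictly less than $1$.

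The bijection of strata completes the proof. Every lc center $Z$ of $(X,\Delta_X)$ is an irreducible component of some $\bigcap_{i\in J}D_i$; it is $g$-vertical, and $g(Z)$ is an irreducible component of $\bigcap_{i\in J}D_i^Y$, hence an lc center of $(Y,\Delta_Y)$. Surjectivity of $Z\mapsto g(Z)$ onto the lc strata of $(Y,\Delta_Y)$ is immediate, since the preimage of any such intersection is a union of strata of $\Delta_X^{=1}$. Injectivity is the crucial Fano-contraction input: if two distinct minimal lc centers $Z_1\neq Z_2$ of $(X,\Delta_X)$ satisfied $g(Z_1)=g(Z_2)$, choose a closed point $y$ in the common image; both $Z_i$ meet $g^{-1}(y)$ and remain distinct minimal lc centers there, contradicting Proposition \ref{p-connect} applied to $g$. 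The minimal case propagates to all strata by downward induction on codimension, yielding the desired isomorphism $\D(\Delta_X^{=1})\cong \D(\Delta_Y^{=1})$. Finally, qdlt-ness of $(Y,\Delta_Y)$ at a codimension-$|J|$ stratum $Z^Y$ is witnessed by the family $\{D_i^Y : i\in J\}$, all $\mathbb Q$-Cartier by the first step. The main obstacle is the injectivity argument, and it is precisely there that the Fano hypothesis enters through Proposition \ref{p-connect}, which genuinely requires $-(K_X+\Delta_X)$ to be $g$-ample.
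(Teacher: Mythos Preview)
Your approach diverges from the paper's in two essential places, and in both the paper's route is simpler and your argument has gaps.

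\textbf{Construction of $\Delta_Y$.} The paper does not use the canonical bundle formula at all. Since $\rho(X/Y)=1$, every $g$-vertical prime divisor $E_i\subset\Delta_X^{=1}$ is the \emph{entire} reduced preimage of its image $F_i:=g(E_i)$, and distinct $E_i$ map to distinct $F_i$ (this is the content of \cite[5.1--5.2]{HX09}). The paper simply sets $\Delta_Y:=\sum_i F_i$. Your detour through the canonical bundle formula introduces a moduli part and horizontal discriminant terms that play no role, and your claim that these contribute coefficients strictly less than $1$ is asserted but not justified.

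\textbf{Bijection of strata.} The paper's key identity is
\[
g^{-1}\Bigl(\bigcap_{i\in J}F_i\Bigr)=\bigcap_{i\in J}g^{-1}(F_i)=\bigcap_{i\in J}E_i,
\]
which, combined with connectedness of the fibers of $g$, gives the bijection of irreducible components directly. Proposition~\ref{p-connect} is not used here. Your alternative via Proposition~\ref{p-connect} is not complete as written: the proposition gives uniqueness of a minimal lc center meeting $g^{-1}(y)$, so your argument only works immediately when $Z_1,Z_2$ are minimal among lc centers meeting $g^{-1}(y)$. For arbitrary strata this requires knowing that no strictly smaller stratum $W\subsetneq Z_i$ also surjects onto $g(Z_i)$, which you have not established; the phrase ``the minimal case propagates to all strata by downward induction on codimension'' does not supply this step.

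\textbf{The qdlt claim.} Exhibiting the $\mathbb Q$-Cartier divisors $D_i^Y$ through a stratum verifies condition (1) of Proposition~\ref{qdlt.prop} at that stratum, but you have not shown that $(Y,\Delta_Y)$ is lc, nor that \emph{every} lc center of $(Y,\Delta_Y)$ is an intersection $\bigcap_{i\in J}D_i^Y$. The paper handles this by appeal to the argument of \cite[5.5]{HX09}.

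In short, the paper's proof rests on the single structural fact $E_i=g^{-1}(F_i)$ coming from $\rho(X/Y)=1$, which makes both the dual-complex identification and the qdlt verification straightforward; your proof trades this for the canonical bundle formula and Proposition~\ref{p-connect}, but the resulting argument is incomplete at the injectivity and qdlt steps.
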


\begin{proof}   It follows from \cite[3.36]{KM98} that $Y$ is $\mathbb{Q}$-factorial. Since  $\rho(Y/X)=1$, we know that each $g$-vertical component is mapped to a divisor on $Y$ and different $g$-vertical divisors are mapped to different divisors on $Y$ (see \cite[5.1, 5.2]{HX09}). 
Write $\Delta_X^{=1}=\sum_{i\in I} E_i$. Then $\Delta_Y=\sum_{i\in I} F_i$ where 
$F_i:=g(E_i)$  and $E_i=g^{-1}(F_i)$. 
Thus, for every $J\subset I$ we have
$$
g^{-1}(\cap_{i\in J}F_i )=\cap_{i\in J}g^{-1}(F_i)=
\cap_{i\in J} E_i,
$$
which shows that $\D\bigl(\sum_{i\in I} E_i\bigr)=
\D\bigl(\sum_{i\in I} F_i\bigr)$.

Similar to the argument of \cite[5.5]{HX09}, we see that 
the log canonical centers of $(Y,\Delta_Y)$ are precisely the nonempty intersections $\cap_{i\in J}F_i$ where $J\subset I$.  
In particular, $(Y,\Delta_Y)$ is qdlt.  
\end{proof}

\medskip

We can now prove the following strengthening of Theorem \ref{t-RC}.

\begin{thm}\label{t-rat}
Let $f:X\to (0\in C)$ be a flat proper family.   Assume that
general fibers $X_t$ are rationally connected
and $(X, \red f^{-1}(0))$ is qdlt.
Then $\D\bigl(\red f^{-1}(0)\bigr)$ is contractible. 
\end{thm}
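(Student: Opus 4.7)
The plan is to induct on $\dim X$, run a relative MMP over $C$, and descend through the terminal Mori fiber contraction. First, by Corollary \ref{qdlt.toroidal.cor2} we may replace $(X,\Delta)$ by a dlt modification without changing the PL homeomorphism type of $\D(\Delta^{=1})$, and then by a small $\mathbb{Q}$-factorial modification (Definition \ref{Qf.defn}); so we may assume $(X,\Delta)$ is $\mathbb{Q}$-factorial dlt with $\Delta^{=1}=\Delta$ and $\supp\Delta=\red f^{-1}(0)$. The key observation is that $f^*(0)=\sum m_i E_i$ is a principal, hence numerically $f$-trivial, effective divisor supported on $\Delta^{=1}$, and this property is preserved by every step of an $(X,\Delta)$-MMP over $C$.

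Since $\Delta$ vanishes on the generic fiber, $(K_X+\Delta)|_{X_\eta}=K_{X_\eta}$, which is not pseudoeffective by rational connectedness of $X_\eta$; hence $K_X+\Delta$ is not pseudoeffective over $C$. By \cite{BCHM10}, a $(K_X+\Delta)$-MMP over $C$ terminates in a Mori fiber space $X\dasharrow X^*\stackrel{g}{\to} Y$ over $C$. At every birational step, the tracked divisor $f^*(0)$ remains numerically trivial over $C$ and supported on the current $\Delta^{=1}$, so Corollary \ref{some.mmp.collapses.cor} applies and shows that $\D(\Delta^{=1})$ collapses to $\D((\Delta^*)^{=1})$, where $\Delta^*$ is the strict transform of $\Delta$ on $X^*$.

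The morphism $Y\to C$ is dominant (since $X^*\to C$ is), so $(Y\to C)^{-1}(0)$ is a proper closed subvariety of $Y$; consequently $\Delta^*$, being contained in the central fiber of $X^*\to C$, is $g$-vertical. Proposition \ref{gdlt.fano.contr.prop} then yields a qdlt pair $(Y,\Delta_Y)$ with $\Delta_Y^{=1}=\Delta_Y=\red(Y\to C)^{-1}(0)$ and a natural identification $\D((\Delta^*)^{=1})\cong \D(\Delta_Y^{=1})$. The general fiber of $Y\to C$ is rationally connected, being the surjective image of the rationally connected general fiber of $X^*\to C$, so $(Y,\Delta_Y,Y\to C)$ satisfies the inductive hypothesis. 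Since $\dim Y<\dim X$, induction yields that $\D(\Delta_Y^{=1})$ is contractible, and composing with the collapse of the previous step proves the theorem. The base case $\dim X=1$ is immediate: rational connectedness of the general fiber forces it to be a single point, so $f$ is an isomorphism and $\D(\red f^{-1}(0))$ is a point.

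The main obstacle is ensuring that qdlt-ness, the identification $\Delta^{=1}=\red f^{-1}(0)$, and rational connectedness of general fibers all persist through the $\mathbb{Q}$-factorialization, the MMP, and the Fano contraction. For the MMP steps this uses that the results of Section~3 extend to quotient-dlt pairs, as remarked at the start of that section. For the Fano contraction, the clean identification of the new boundary with the reduced central fiber on $Y$ relies on the explicit divisor-to-divisor bijection provided by Proposition \ref{gdlt.fano.contr.prop}.
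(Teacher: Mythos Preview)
Your proof is correct and follows essentially the same strategy as the paper: reduce to a $\mathbb{Q}$-factorial dlt model, run an MMP over $C$ using the numerically trivial divisor $f^*(0)$ supported on $\Delta^{=1}$ to invoke Corollary~\ref{some.mmp.collapses.cor}, terminate in a Mori fiber space, and descend via Proposition~\ref{gdlt.fano.contr.prop}. The only cosmetic differences are that the paper reduces via Theorem~\ref{t-sdr} to a smooth snc model rather than via Corollary~\ref{qdlt.toroidal.cor2} to a dlt modification, and the paper treats the case $Y=C$ directly (using $\rho(X'/C)=1$ to see that $\Delta'$ is irreducible) instead of folding it into a $\dim X=1$ base case.
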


\begin{proof} By  Theorem \ref{t-sdr} we may assume that $X$ is smooth,
$f$ is projective  and
$\Delta:=\red f^{-1}(0)$ is snc.

 Then we run a $(K_X+\Delta)$-MMP over $C$.
This is the same as running  a $(K_X+\Delta-\epsilon f^{-1}(0))$-MMP with 
 for  $1\gg \epsilon>0$. 
As $X_t$ is smooth and rationally connected,  $K_X$ is not pseudo-effective, 
 thus by \cite{BCHM10} a suitable MMP terminates with a Fano contraction 
$g:(X', \Delta')\to Y$ over $C$  where $\dim X'> \dim Y$. 

Since $f^{-1}(0)$ is numerically $f$-trivial and its support is 
$\Delta^{=1}=\Delta$, 
 Corollary \ref{some.mmp.collapses.cor} guarantees
that $\D(\Delta)$ collapses to $\D(\Delta')$.

If $Y=C$ then $\Delta'=\red (f')^{-1}(0)$ is irreducible
  since $\rho(X'/Y)=1$. Thus $\D(\Delta')$ is a point and we are done.

 If $\dim Y>1$ then we look at $f_Y:Y\to C$. 
By Proposition \ref{gdlt.fano.contr.prop},
$(Y, \red f_Y^{-1}(0))$ is again qdlt
and $\D\bigl(\red (f')^{-1}(0)\bigr)$ can be identified with
 $\D\bigl(\red f_Y^{-1}(0)\bigr)$. The latter is collapsible
by induction, hence so is $\D(\Delta')$ and hence $\D(\Delta)$.
  \end{proof}


\begin{bibdiv}
\begin{biblist}

\bib{ABW11}{article}{
   author={Arapura, Donu},
   author={Bakhtary, Parsa},
   author={W{\l}odarczyk, Jaros{\l}aw},
   title={ Weights on cohomology, invariants of singularities, and dual complexes},
   journal={Math. Ann.},
   volume={357},
   date={2013},
   number={2},
   pages={513--550},
}

\bib{AKMW02}{article}{
   author={Abramovich, Dan},
   author={Karu, Kalle},
   author={Matsuki, Kenji},
   author={W{\l}odarczyk, Jaros{\l}aw},
   title={Torification and factorization of birational maps},
   journal={J. Amer. Math. Soc.},
   volume={15},
   date={2002},
   number={3},
   pages={531--572 (electronic)},
}

\bib{Ambro03}{article}{
    AUTHOR = {Ambro, Florin},
     TITLE = {Quasi-log varieties},
   JOURNAL = {Tr. Mat. Inst. Steklova},
  FJOURNAL = {Trudy Matematicheskogo Instituta Imeni V. A. Steklova.
              Rossi\u\i skaya Akademiya Nauk},
    VOLUME = {240},
      YEAR = {2003},
    NUMBER = {Biratsion. Geom. Linein. Sist. Konechno Porozhdennye Algebry},
     PAGES = {220--239},
      ISSN = {0371-9685},
   MRCLASS = {14E30 (14J10)},
  MRNUMBER = {MR1993751 (2004f:14027)},
MRREVIEWER = {Tomasz Szemberg},
}

\bib{Ambro04}{article}{
   author={Ambro, Florin},
   title={Shokurov's boundary property},
   journal={J. Differential Geom.},
   volume={67},
   date={2004},
   number={2},
   pages={229--255},
}

\bib{BCHM10}{article}{
   author={Birkar, Caucher},
   author={Cascini, Paolo},
   author={Hacon, Christopher D.},
   author={McKernan, James},
   title={Existence of minimal models for varieties of log general type},
   journal={J. Amer. Math. Soc.},
   volume={23},
   date={2010},
   number={2},
   pages={405--468},
}

\bib{Coh73}{book}{
    AUTHOR = {Cohen, Marshall M.},
     TITLE = {A course in simple-homotopy theory},
      NOTE = {Graduate Texts in Mathematics, Vol. 10},
 PUBLISHER = {Springer-Verlag},
   ADDRESS = {New York},
      YEAR = {1973},
     PAGES = {x+144},
   MRCLASS = {57C10},
  MRNUMBER = {0362320 (50 \#14762)},
MRREVIEWER = {R. M. F. Moss},
}

\bib{Corti07}{collection}{
   title={Flips for 3-folds and 4-folds},
   series={Oxford Lecture Series in Mathematics and its Applications},
   volume={35},
   editor={Corti, Alessio},
   publisher={Oxford University Press},
   place={Oxford},
   date={2007},
   pages={x+189},
}

\bib{Deligne71}{article}{
   author={Deligne, Pierre},
   title={Th\'eorie de Hodge. II},
    journal={Inst. Hautes \'Etudes Sci. Publ. Math.},
   number={40},
   date={1971},
   pages={5--57},
}

\bib{dF12}{article}{
AUTHOR = {de Fernex, Tommaso },
TITLE = {problem session at AIM workshop 
``ACC for minimal log discrepancies and termination of flips''},
YEAR = {2012},
}

\bib{FM83}{book}{
    EDITOR = {Friedman, Robert},
 EDITOR = {Morrison, David R.},
    TITLE = {The birational geometry of degenerations},
    SERIES = {Progr. Math.},
    VOLUME = {29},
     PAGES = {1--32},
 PUBLISHER = {Birkh\"auser Boston},
   ADDRESS = {Mass.},
      YEAR = {1983},
   MRCLASS = {14J10 (14E05 32J15)},
  MRNUMBER = {690262 (84g:14032)},
MRREVIEWER = {Masa-Nori Ishida},
}

\bib{Flo-Ric}{article}{
   author={Floyd, E.},
author={Richardson, R.W.},
   title={ An action of a finite group on an n-cell without stationary points},
    journal={Bull. Amer. Math. Soc.},
   number={65},
   date={1959},
   pages={73--76},
}

\bib{FujTak-c3}{article}{
   author = {Fujino, Osamu},
 author ={Takagi, Shunsuke},
    title = {On the F-purity of isolated log canonical singularities},
  journal = {Compos. Math. (to appear)},
   eprint = {1112.2383},
 keywords = {Mathematics - Algebraic Geometry, Mathematics - Commutative Algebra, Primary 14B05, Secondary 13A35, 14E30},
     year = {2011},
   adsurl = {http://adsabs.harvard.edu/abs/2011arXiv1112.2383F},
  adsnote = {Provided by the SAO/NASA Astrophysics Data System},
}

\bib{Fuj-c4}{article}{
    AUTHOR = {Fujino, Osamu},
     TITLE = {Abundance theorem for semi log canonical threefolds},
   JOURNAL = {Duke Math. J.},
  FJOURNAL = {Duke Mathematical Journal},
    VOLUME = {102},
      YEAR = {2000},
    NUMBER = {3},
     PAGES = {513--532},
      ISSN = {0012-7094},
     CODEN = {DUMJAO},
   MRCLASS = {14E30 (14C20 14E07)},
  MRNUMBER = {1756108 (2001c:14032)},
MRREVIEWER = {Tomasz Szemberg},
}

\bib{Fuj-c2}{article}{
    AUTHOR = {Fujino, Osamu},
     TITLE = {The indices of log canonical singularities},
   JOURNAL = {Amer. J. Math.},
  FJOURNAL = {American Journal of Mathematics},
    VOLUME = {123},
      YEAR = {2001},
    NUMBER = {2},
     PAGES = {229--253},
      ISSN = {0002-9327},
     CODEN = {AJMAAN},
   MRCLASS = {14B05 (14E05 14L30)},
  MRNUMBER = {MR1828222 (2002c:14003)},
MRREVIEWER = {Shihoko Ishii},
}

\bib{Fujino}{article}{
    AUTHOR = {Fujino, Osamu},
     TITLE = {What is log terminal?},
 BOOKTITLE = {Flips for 3-folds and 4-folds},
    SERIES = {Oxford Lecture Ser. Math. Appl.},
    VOLUME = {35},
     PAGES = {49--62},
 PUBLISHER = {Oxford Univ. Press},
   ADDRESS = {Oxford},
      YEAR = {2007},
   MRCLASS = {14E30},
  MRNUMBER = {MR2359341},
}

\bib{Fuj-c1}{article}{
   author = {Fujino, Osamu},
    title = {On isolated log canonical singularities with index one},
journal = {J. Math. Sci. Univ. Tokyo}
   year = {2011},
volume = {18},
}

\bib{Fuj}{article}{
AUTHOR = {Fujino, Osamu},
     TITLE = {Fundamental theorems for the log minimal model program},
   JOURNAL = {Publ. Res. Inst. Math. Sci.},
  FJOURNAL = {Publications of the Research Institute for Mathematical
              Sciences},
    VOLUME = {47},
      YEAR = {2011},
    NUMBER = {3},
     PAGES = {727--789},
}

\bib{Hatcher}{book}{
author={Hatcher, Allen}
title={Algebraic topology}
PUBLISHER = {Cambridge Univ. Press},
date={2002},
}

\bib{HM06}{article}{
   author={Hacon, Christopher D.},
   author={McKernan, James},
   title={Boundedness of pluricanonical maps of varieties of general type},
   journal={Invent. Math.},
   volume={166},
   date={2006},
   number={1},
   pages={1--25},
}

\bib{HMX12}{article}{
   author={Hacon, Christopher D.},
   author={McKernan, James},
   author={Xu, Chenyang},
   title={ACC for log canonical thresholds},
   journal={Ann. of Math. (to appear)}
   note={arXiv:1208.4150},
   date={2012}
}

\bib{HX09}{article}{
   author={Hogadi, Amit},
   author={Xu, Chenyang},
   title={Degenerations of rationally connected varieties},
   journal={Trans. Amer. Math. Soc.},
   volume={361},
   date={2009},
   number={7},
   pages={3931--3949},
}

\bib{HX12}{article}{
   author={Hacon, Christopher D.},
   author={Xu, Chenyang},
   title={Existence of log canonical closures},
   journal={Invent. Math.},
   volume={192},
   date={2013},
   number={1},
   pages={161--195},
  date={2013}
}


\bib{KK10}{article}{
   author={Koll{\'a}r, J{\'a}nos},
   author={Kov{\'a}cs, S{\'a}ndor J.},
   title={Log canonical singularities are Du Bois},
   journal={J. Amer. Math. Soc.},
   volume={23},
   date={2010},
   number={3},
   pages={791--813},
}

\bib{KK11}{article}{
   author = {Kapovich, Michael},
 author = {Koll{\'a}r, J{\'a}nos},
    title = {Fundamental groups of links of isolated singularities},
  journal = {Jour. AMS  (to appear)},
     date={2011},
 eprint = {1109.4047},
}

\bib{KKMSD73}{book}{
  author={Kempf, G.},
  author={Knudsen, F.F.},
  author={Mumford, D.},
  author={Saint-Donat, B.},
  title={Toroidal embeddings. I},
  series={Lecture Notes in Mathematics, Vol. 339},
  publisher={Springer-Verlag},
  place={Berlin},
  date={1973},
  pages={viii+209},
}

\bib{KM98}{book}{
   author={Koll{\'a}r, J{\'a}nos},
   author={Mori, Shigefumi},
   title={Birational geometry of algebraic varieties},
   series={Cambridge Tracts in Mathematics},
   volume={134},
   note={With the collaboration of C. H. Clemens and A. Corti;
   Translated from the 1998 Japanese original},
   publisher={Cambridge University Press},
   place={Cambridge},
   date={1998},
   pages={viii+254},

}

\bib{MR1284817}{article}{
    AUTHOR = {Keel, Sean},
  AUTHOR = {Matsuki, Kenji},
 AUTHOR = {McKernan, James},
     TITLE = {Log abundance theorem for threefolds},
   JOURNAL = {Duke Math. J.},
  FJOURNAL = {Duke Mathematical Journal},
    VOLUME = {75},
      YEAR = {1994},
    NUMBER = {1},
     PAGES = {99--119},
      ISSN = {0012-7094},
     CODEN = {DUMJAO},
   MRCLASS = {14E30 (14J30 14J35)},
  MRNUMBER = {MR1284817 (95g:14021)},
MRREVIEWER = {Mark Gross},
}

\bib{K-etal92}{book}{
    AUTHOR = {Koll{\'a}r, J{\'a}nos},
    TITLE =  {Flips and abundance for algebraic threefolds},
       NOTE = {Papers from the Second Summer Seminar on Algebraic Geometry
              held at the University of Utah, Salt Lake City, Utah, August
              1991,
              Ast\'erisque No. 211 (1992)},
 PUBLISHER = {Soci\'et\'e Math\'ematique de France},
       YEAR = {1992},
     PAGES = {115--126},
      ISSN = {0303-1179},
   MRCLASS = {14E30 (14E35 14M10)},
  MRNUMBER = {94f:14013},
MRREVIEWER = {Mark Gross},
}

\bib{Koll93}{article}{
   author={Koll{\'a}r, J{\'a}nos},
   title={Shafarevich maps and plurigenera of algebraic varieties},
   journal={Invent. Math.},
   volume={113},
   date={1993},
   number={1},
   pages={177--215},
}

\bib{Kollar07}{article}{
   author={Koll{\'a}r, J{\'a}nos},
   title={A conjecture of Ax and degenerations of Fano varieties},
   journal={Israel J. Math.},
   volume={162},
   date={2007},
   pages={235--251},
}

\bib{k-res}{book}{
    AUTHOR = {Koll{\'a}r, J{\'a}nos},
     TITLE = {Lectures on resolution of singularities},
    SERIES = {Annals of Mathematics Studies},
    VOLUME = {166},
 PUBLISHER = {Princeton University Press},
   ADDRESS = {Princeton, NJ},
      YEAR = {2007},
     PAGES = {vi+208},
      ISBN = {978-0-691-12923-5; 0-691-12923-1},
   MRCLASS = {14E15},
  MRNUMBER = {2289519},
}

\bib{Kollar11}{article}{
   author={Koll{\'a}r, J{\'a}nos},
   title={Sources of log canonical centers},
   note={To appear in {\it Minimal models and extremal rays -- proceedings of the conference in honor of Shigefumi Mori's 60th birthday}, Advanced Studies in Pure Mathematics, Kinokuniya Publishing House, Tokyo, arXiv:1107.2863},
   date={2011}
}

\bib{Kollar11b}{article}{
   author={Koll{\'a}r, J{\'a}nos},
   title={New examples of terminal and log canonical singularities},
   note={arXiv:1107.2864},
   date={2011}
}

\bib{Kollar12}{article}{
   author={Koll{\'a}r, J{\'a}nos},
   title={Links of complex analytic singularities},
 journal={Surveys in Differential Geometry},
volume={18},
pages={157-193},
dol={http://dx.doi.org/10.4310/SDG.2013.v18.n1.a4},
   date={2013}
}

\bib{kk-singbook}{book}{
    AUTHOR = {Koll{\'a}r, J{\'a}nos},
     TITLE = {Singularities of the minimal model program},
    VOLUME={200},
        NOTE = {With the collaboration of S. Kov\'acs},
 PUBLISHER = {Cambridge University Press},
   ADDRESS = {Cambridge},
      YEAR = {2013},
   }

\bib{Kollar13}{article}{
   author = {Koll{\'a}r, J{\'a}nos},
    title = {Simple normal crossing varieties with prescribed dual complex},
  journal = {Algebraic Geometry},
  keywords = {Mathematics - Algebraic Geometry, 14B05, 14J17, 14M99},
     year = {2014},
    number = {1},
    pages={57-68},
     adsurl = {http://adsabs.harvard.edu/abs/2013arXiv1301.1089K},
  adsnote = {2013arXiv1301.1089K}
}

\bib{KS11}{article}{
   author={Kerz, Moritz},
   author={Saito, Shuji},
   title={Cohomological Hasse principle and resolution of quotient singularities},
      journal={New York J. of Math. (to appear)}
   note={arXiv:1111.7177},
   date={2011},
}

\bib{Kul77}{article}{
    AUTHOR = {Kulikov, Vik. S.},
     TITLE = {Degenerations of {$K3$} surfaces and {E}nriques surfaces},
   JOURNAL = {Izv. Akad. Nauk SSSR Ser. Mat.},
  FJOURNAL = {Izvestiya Akademii Nauk SSSR. Seriya Matematicheskaya},
    VOLUME = {41},
      YEAR = {1977},
    NUMBER = {5},
     PAGES = {1008--1042, 1199},
      ISSN = {0373-2436},
   MRCLASS = {14J25},
  MRNUMBER = {0506296 (58 \#22087b)},
MRREVIEWER = {I. Dolgachev},
}

\bib{LX11}{article}{
   author={Li, Chi},
   author={Xu, Chenyang},
   title={Special test configurations and K-stability of Fano varieties},
      journal={Ann. of Math. (to appear)}
   note={arXiv:1111.5398}
   date={2011}
}

\bib{OX12}{article}{
   author={Odaka, Yuji},
   author={Xu, Chenyang},
   title={Log-canonical models of singular pairs and its applications},
   journal={Math. Res. Lett.},
   volume={19}
   pages={325--334}
   number={2}
   date={2012}
}

\bib{pachner}{article}{
   author={Pachner, U.},
title={PL homeomorphic manifolds are equivalent by elementary shellings},
journal={Eur. J. Comb.},
 volume={12}
   pages={129--145}
   number={2}
   date={1991}
}

\bib{Payne11}{article}{
   author={Payne, Sam},
   title={Boundary complexes and weight filtrations},
   Journal={Mich. Math. J.}
  volume={62}
   pages={293--322}
   date={2013}
   }

\bib{par-sri}{article}{
    AUTHOR = {Parameswaran, A. J.},
  AUTHOR = {Srinivas, V.},
     TITLE = {A variant of the {N}oether-{L}efschetz theorem: some new
              examples of unique factorisation domains},
   JOURNAL = {J. Algebraic Geom.},
  FJOURNAL = {Journal of Algebraic Geometry},
    VOLUME = {3},
      YEAR = {1994},
    NUMBER = {1},
     PAGES = {81--115},
}

\bib{Sho92}{article}{
    AUTHOR = {Shokurov, V. V.},
     TITLE = {Three-dimensional log perestroikas},
   JOURNAL = {Izv. Ross. Akad. Nauk Ser. Mat.},
  FJOURNAL = {Rossi\u\i skaya Akademiya Nauk. Izvestiya. Seriya
              Matematicheskaya},
    VOLUME = {56},
      YEAR = {1992},
    NUMBER = {1},
     PAGES = {105--203},
      ISSN = {0373-2436},
   MRCLASS = {14E05 (14E35)},
  MRNUMBER = {1162635 (93j:14012)},
MRREVIEWER = {A. S. Tikhomirov},
}
\bib{Shokurov00}{article}{
   author={Shokurov, V. V.},
   title={Complements on surfaces},
   note={Algebraic geometry, 10},
   journal={J. Math. Sci. (New York)},
   volume={102},
   date={2000},
   number={2},
   pages={3876--3932},
}

\bib{Spanier}{book}{
author={Spanier, E.H.}
title={Algebraic topology}
PUBLISHER = {McGraw-Hill},
date={1966},
}

\bib{steenbrink}{article}{
    AUTHOR = {Steenbrink, J. H. M.},
     TITLE = {Mixed {H}odge structures associated with isolated
              singularities},
 BOOKTITLE = {Singularities, {P}art 2 ({A}rcata, {C}alif., 1981)},
    SERIES = {Proc. Sympos. Pure Math.},
    VOLUME = {40},
     PAGES = {513--536},
 PUBLISHER = {Amer. Math. Soc.},
   ADDRESS = {Providence, RI},
      YEAR = {1983},
   MRCLASS = {32G11 (14B05 14B07 14C30 14D05)},
  MRNUMBER = {713277 (85d:32044)},
MRREVIEWER = {Jerome William Hoffman},
}

\bib{Stepanov06}{article}{
   author={Stepanov, D. A.},
   title={A remark on the dual complex of a resolution of singularities},
   language={Russian},
   journal={Uspekhi Mat. Nauk},
   volume={61},
   date={2006},
   number={1(367)},
   pages={185--186},
   translation={
      journal={Russian Math. Surveys},
      volume={61},
      date={2006},
      number={1},
      pages={181--183},
   },
}
		
\bib{Stepanov07}{article}{
   author={Stepanov, D. A.},
   title={Dual complex of a resolution of terminal singularities},
   language={Russian},
   conference={
      title={Proceedings of the Fifth Kolmogorov Lectures (Russian)},
   },
   book={
      publisher={Yaroslav. Gos. Ped. Univ. im. K. D. Ushinskogo,
   Yaroslavl\cprime},
   },
   date={2007},
   pages={91--99},
}
		
\bib{Stepanov08}{article}{
   author={Stepanov, D. A.},
   title={A note on resolution of rational and hypersurface singularities},
   journal={Proc. Amer. Math. Soc.},
   volume={136},
   date={2008},
   number={8},
   pages={2647--2654},
}

\bib{szabo}{article}{
    AUTHOR = {Szab{\'o}, Endre},
     TITLE = {Divisorial log terminal singularities},
   JOURNAL = {J. Math. Sci. Univ. Tokyo},
  FJOURNAL = {The University of Tokyo. Journal of Mathematical Sciences},
    VOLUME = {1},
      YEAR = {1994},
    NUMBER = {3},
     PAGES = {631--639},
      ISSN = {1340-5705},
   MRCLASS = {14E15 (14E30)},
  MRNUMBER = {1322695 (96f:14019)},
MRREVIEWER = {Mark Gross},
}

\bib{Takayama03}{article}{
   author={Takayama, Shigeharu},
   title={Local simple connectedness of resolutions of log-terminal
   singularities},
   journal={Internat. J. Math.},
   volume={14},
   date={2003},
   number={8},
   pages={825--836},
}

\bib{Thuillier07}{article}{
   author={Thuillier, Amaury},
   title={G\'eom\'etrie toro\"\i dale et g\'eom\'etrie analytique non
   archim\'edienne. Application au type d'homotopie de certains sch\'emas
   formels},
   language={French, with English summary},
   journal={Manuscripta Math.},
   volume={123},
   date={2007},
   number={4},
   pages={381--451},
}


\bib{zee61}{article}{
AUTHOR = {Zeeman, E.C.},
TITLE = {On the dunce hat},
JOURNAL = {Topology},
volume={2}
 YEAR = {1961},
     PAGES = {341--358},
 }


\end{biblist}
\end{bibdiv}
\bigskip

\noindent TdF: Dept.\ of Mathematics, University of Utah, Salt Lake City, UT 84112, USA

{\begin{verbatim}defernex@math.utah.edu\end{verbatim}}
\medskip

\noindent JK: Princeton University, Princeton NJ 08544-1000

{\begin{verbatim}kollar@math.princeton.edu\end{verbatim}}
\medskip

\noindent  CX: Beijing International Center of Mathematics Research, 
Beijing, 100871, China  

Dept.\ of Mathematics, University of Utah, 
Salt Lake City, UT 84112, USA

{\begin{verbatim} cyxu@math.pku.edu.cn,  cyxu@math.utah.edu\end{verbatim}}

\end{document}